\numberwithin{equation}{section}
\newcommand{\BlackBoxes}{\global\overfullrule5pt}
\newcommand{\weakD}{\stackrel{\lower0.2ex\hbox{$\scriptscriptstyle   
             \mathbf{D}[0,1] $}}{\Rightarrow}}
\newcommand{\weak}{\stackrel{\lower0.2ex\hbox{$\scriptscriptstyle
                    \it{w} $}}{\rightarrow}}
\newcommand{\tauconv}{\stackrel{\lower0.2ex\hbox{$\scriptscriptstyle
                    \it{\tau} $}}{\rightarrow}}
\newcommand{\calX}{\mathcal{X}}
\def\@serieslogo{%
\vbox to\headheight{%
\parindent\z@ \fontsize{6}{7\p@}\selectfont
August 29, 2014\endgraf
\vss}}}
\def\N{\mathbf{N}}
\def\R{\mathcal{R}}
\def\Prob{\mathbb{P}}
\def\E{\mathbb{E}}
\def\Eb{\bar{\mathbb{E}}}
\def\calB{\mathcal{B}}
\newcommand{\edfis}{\tilde{\mathbf{F}}}
\newcommand{\empd}{\mathbf{F}}
\newcommand{\M}{\mathcal{M}}
\newcommand{\re}{\mathcal{H}}
\newcommand{\Var}{\textrm{Var}}
\newcommand{\calF}{\mathcal{F}}
\theoremstyle{theorem}
\newtheorem{theorem}{Theorem}[section]
\newtheorem{lemma}[theorem]{Lemma}
\newtheorem{proposition}[theorem]{Proposition}
\theoremstyle{definition}
\newtheorem{example}[theorem]{Example}
\newtheorem{remark}[theorem]{Remark}
\begin{document}   

\title[Empirical measures in importance sampling]{Large deviations for weighted empirical measures
   arising in importance sampling}        


\author[H.~Hult]{Henrik Hult$^{\dagger}$}  
\address[H.~Hult]{Department of Mathematics, KTH, 100 44 Stockholm, Sweden} 
\email{{hult@kth.se}}  

\author[P.~Nyquist]{Pierre Nyquist}
\address[P.~Nyquist]{Department of Mathematics, KTH, 100 44, Stockholm, Sweden}
\email{{pierren@kth.se}}  
\copyrightinfo{the Authors}  

\thanks{$^{\dagger}$ Support from the G\"oran Gustafsson Foundation is gratefully acknowledged.}   



\subjclass[2010]{Primary 60F10, 65C05; secondary 60G57, 68U20}

\keywords{Large deviations, empirical measures, importance sampling, Monte Carlo}

\begin{abstract} 
In this paper the efficiency of an importance sampling algorithm is studied by means of large deviations for the associated weighted empirical measure. The main result, stated as a Laplace principle for these weighted empirical measures, can be viewed as an extension of Sanov's theorem. The main theorem is used to quantify the performance of an importance sampling algorithm over a collection of subsets of a given target set as well as quantile estimates. The analysis yields an estimate of the sample size needed to reach a desired precision and of the reduction in cost compared to standard Monte Carlo.
\end{abstract}



\maketitle   
\section{Introduction}
Computational methods based on stochastic simulation - the collective term for simulating a physical system, involving random coefficients, on a computer - are fundamental in a wide range of applications, including the natural sciences, computer science, economics and finance, operations research, engineering sciences such as power grids, reliability, solid mechanics, etc. As the complexity of the mathematical models used in these subjects has increased, so has the need for fast and reliable computational methods. The aim of this paper is to introduce a new approach, based on the theory of large deviations for empirical measures, for analyzing the efficiency of stochastic simulation methods. In the present paper the emphasis is on importance sampling algorithms and the results can be viewed as a first step towards a new method for analyzing efficiency that can complement the existing variance-based approach. 

The basic idea in stochastic simulation is to generate a population of particles that moves randomly according to the laws of the physical system. Each particle carries an individual weight, which may be updated during the simulation, and quantities of the underlying system are computed by averaging the particles' weights depending on their position. The canonical example is (standard) Monte Carlo simulation where all particles are independent and statistically identical, their weights being constant and equal.

Although the standard Monte Carlo method is widely used it is by no means universally applicable. One reason is that particles may wander off to irrelevant parts of the state space, leaving only a small fraction of relevant particles that contribute to the computational task at hand. Therefore, standard Monte Carlo may come with a computational cost that is too high for practical purposes. To remedy this, a control mechanism is needed that forces the particles to move to the relevant part of the state space, thereby reducing the computational cost. Such a control mechanism may come in different forms depending on the type of algorithm under consideration. In importance sampling, see e.g. \cite{asmussen2007}, the control is the choice of sampling dynamics. In splitting algorithms \cite{DeanDupuis2009} and genealogical particle systems \cite{DelMoralGarnier, DupuisCai} the control mechanism come, roughly speaking, in the form of a birth/death mechanism, which controls that important particles give birth to new particles and irrelevant particles are killed. For more on these and other related methods see \cite{asmussen2007, RubinoTuffin} and the references therein.

Much of the theoretical work on the efficiency of stochastic simulation algorithms in general, and importance sampling algorithms in particular, is based on analyzing the variance of the resulting estimator. As long as an estimator is unbiased, variance is indeed the canonical measure of efficiency. However, when one considers situations in which the estimator is no longer unbiased, for example non-linear functionals of the underlying distribution, variance is no longer the obvious choice for determining performance and additional measures of efficiency can be useful.

Monte Carlo estimators are of the plug-in type where one first constructs an empirical measure associated with the distribution of interest and then plug this into whatever functional one is trying to estimate. A large deviation analysis for the underlying empirical measure will thus be useful in understanding the properties, such as rate of convergence, of the estimator; see \cite{TrashorrasWintenberger} for a recent example regarding bootstrap estimators. Therefore, much like for many other statistical estimators, a natural progression in the study of importance sampling algorithms and their performance is to consider the large deviation properties of the underlying empirical measure.

We propose in this paper that large deviation theory for the empirical measures arising in importance sampling can be used to define new performance criteria, based on the associated rate function, to complement the variance analysis. In addition to deriving the relevant large deviation result, we introduce one such criterion, which we refer to as evaluating performance over subsets. The intuition is that if a simulation algorithm is to approximate the original system in a certain region of the state space, then it should do so equally well over all subsets of that region that are not, in a sense, too small. 

To avoid confusion it should be pointed out that techniques from sample-path large deviations have been studied thoroughly, to control the rate at which the variance decays, in the context of designing efficient rare-event simulation algorithms; see for example \cite{DupuisWang,DupuisWangSubsol} for results on the so-called subsolution approach. Our objective is fundamentally different. We want to complement the variance by the rate function of a large deviation principle as the number of particles increases. For our purposes the appropriate framework is large deviations for empirical measures, in the spirit of Sanov's theorem \cite[Theorem 2.2.1]{Dupuis97}, 
rather than sample-path large deviations. The suggested approach is applicable to a wide range of simulation problems and is not intended exclusively for problems in rare-event simulation. Furthermore, the result is potentially useful for theoretical comparison of the performance of algorithms of different character, based on, say, importance sampling and interacting particle systems, by comparing the associated rate functions. 

The contribution of the paper is two-fold. First, we prove a Laplace principle, in the space of finite measures equipped with the $\tau$-topology, for the weighted empirical measure arising in importance sampling. This can be viewed as an extension of Sanov's theorem and under certain conditions the result follows from the latter and the contraction principle. Our proof of the general version, stated in Theorem \ref{thm:Laplace}, is based on the weak convergence approach to large deviations. It leads to a method for theoretical quantification of performance for importance sampling algorithms for non-linear functionals of the underlying distribution such as quantiles and other risk measures.

Second, we study the associated rate function and introduce a performance criteria for quantifying the performance of an importance sampling algorithm over a collection of subsets of a given target set as well as quantile estimates. A compelling feature of the large deviation approach is that it provides explicit estimates of the sample size needed to reach a desired accuracy as well as of the reduction in computational cost for importance sampling compared to standard Monte Carlo. 

The outline of the paper is as follows. In Section \ref{sec:background} an introduction to importance sampling is presented along with background on variance based efficiency analysis and large deviations for empirical measures. The main theoretical result, a Laplace principle for the weighted empirical measure of an importance sampling algorithm, is presented in Section \ref{sec:Main_results}. Applications of the large deviation result to efficiency analysis and design of importance sampling algorithms are given in Section \ref{sec:applications}. Most of the examples are well-studied elsewhere and are mainly intended to demonstrate the efficiency analysis using the rate function in contrast to the standard variance analysis. Section \ref{sec:proofs} contains the proof of the Laplace principle in Section \ref{sec:Main_results}. 

\section{Background and motivation} \label{sec:background}
Let $\calX$ be a complete separable metric space equipped with its Borel $\sigma$-field $\calB(\calX)$ and let $(\Omega, \calF, \Prob)$ be a probability space. Unless otherwise stated subsets of $\calX$ under consideration are always assumed in $\calB(\calX)$. Consider a random variable $X: \Omega \to \calX$ with distribution $F$. Denote by $\M_{1}$ the space of probability measures on $\calX$. The objective is to approximate $F$ in a given region $A \in \calB(\calX)$ or to compute $\Phi (F)$, where $\Phi : \M_{1} \to \R$ is a given real-valued functional. Examples of functionals that may be of interest are expectations, 
\begin{align*}
	\Phi _f(F) = \int f dF, \text{ for some } f: \calX \to \R, 
\end{align*}
and, when $X$ is real valued, quantiles
\begin{align*}
	\Phi_q (F) = F^{-1}(q) = \inf \{x: F((x,\infty)) \leq q \}, \;q \in (0,1),
\end{align*}
and $L$-statistics, 
\begin{align*}
	\Phi (F) = \int _{0} ^1 \phi (q) F^{-1}(q)dq. 
\end{align*}

Only in exceptional cases is explicit computation of such functionals possible. When explicit computations are not possible a viable alternative is simulation.
The standard method of simulation is Monte Carlo, in which the empirical measure 
\begin{align*}
\empd _n = \frac{1}{n} \sum_{k=1}^{n} \delta _{X_k},
\end{align*}
 is constructed from an independent sample $X_1,...,X_n$ from $F$. Here $\delta _x$ denotes a unit point mass at $x$. The quantity $\Phi (F)$ is estimated by the plug-in estimator $\Phi (\empd _n)$. Roughly speaking, if $\empd _n$ is a good approximation of $F$ in a region that largely determines $\Phi (F)$, then $\Phi (\empd _n)$ is likely to provide a good estimate.

\subsection{Large deviation analysis for quantifying the performance of Monte Carlo algorithms}
%
%
The precision of the estimator $\Phi(\empd_{n})$ will of course be affected by the sample size $n$. By the law of large numbers for empirical measures $\empd _n$ converges weakly to $F$ with probability one,  as $n \to \infty$, and an increased sample size $n$ will thus improve the accuracy in this sense. If  $\Phi (\empd _n)$ is an unbiased estimate of $\Phi (F)$, then the sample size required to reach a desired precision is well-captured by $\Var (\Phi(\empd _n))$ and the performance of an estimator can analyzed in terms of the variance; see \cite{asmussen2007} for different variance-based performance criteria. However, in the general case it may be that looking solely at the variance of the estimator is insufficient. One might use the mean-squared error $\E [(\Phi (\empd _n) - \Phi (F) )^2]$ instead but just as the variance this is not necessarily easy to analyze for a general $\Phi$.

An alternative way of quantifying the efficiency of the Monte Carlo estimator is through the theory of large deviations.
To illustrate the point, let us consider the example of computing the expectation $F(f) = \int f dF$ for some $f:\calX \mapsto \R$ by Monte Carlo simulation. An estimate of $F(f)$ is then given by
\begin{equation} 
\label{eq:MC_mean}
\empd _n (f) = \frac{1}{n} \sum _{i=1} ^n f(X_i),
\end{equation}
where the $X_i$'s are independent with common distribution $F$. 

Cram\'er's theorem  states that if $\E[\exp\{\theta f(X)\}] <\infty$ for $\theta$ in a neighborhood of the origin, then $\empd_{n}(f)$ satisfies the large deviation principle:
\begin{align*}
	- I(A^{\circ}) &\leq \liminf_{n \to \infty} \frac{1}{n} \log \Prob(\empd_{n}(f) \in A^{\circ}) \\
	& \leq \limsup_{n \to \infty} \frac{1}{n} \log \Prob(\empd_{n}(f) \in \bar{A}) \leq - I(\bar{A}),
\end{align*}
for all Borel sets $A$, where $\bar{A}$ and $A^{\circ}$ denote the closure and interior of $A$, respectively, $I(A) = \inf_{x \in A}I(x)$, and the rate function $I$ is given by
\begin{align*}
	I(x) = \sup_{\theta} \{ \theta x - \kappa(\theta) \}, 
\end{align*}
where $\kappa(\theta) = \log \E[\exp\{\theta f(X)\}]$ is the logarithmic moment generating function \cite[p.\ 26]{Dembo98}.

Suppose for the sake of illustration that, with probability at least $1-\delta$, a relative precision $\epsilon$ is desired in the estimate. That is, the sample size $n$ must be selected sufficiently high that
$$ \Prob (|\empd _n(f) - F(f)|\geq \epsilon F(f)) \leq \delta.$$
Let $A_{\epsilon}$ denote the complement of the open ball of radius $\epsilon F(f)$ centred at $F(f)$, 
\begin{align*}
	A _{\epsilon} = \{ x \in \R : | x - F(f)| \geq \epsilon F(f) \}.
\end{align*}
By Cram\'{e}r's theorem,
\begin{align*}
	\limsup _n \frac{1}{n}\log \Prob (\empd _n(f) \in A_{\epsilon}) \leq - I(A _{\epsilon}), 
\end{align*}
which implies that for large $n$, at least approximately, 
\begin{equation*}
	\Prob (\empd _n(f) \in A_{\epsilon}) \lessapprox e^{-n I(A _{\epsilon})}.
\end{equation*}
An upper bound $\delta$ on the error probability corresponds to 
\begin{equation}\label{eq:nCramer}
	n \gtrapprox  \frac{1}{I(A_{\epsilon})}(-\log \delta).
\end{equation}
Roughly speaking the sample size must be proportional to the reciprocal of the rate for the error probability in order to obtain the desired performance. 

Let us consider a more general example where we are interested in approximating the true distribution $F$ over a region $A \in \calB(\calX)$. Again the empirical measure $\empd_{n}$ resulting from Monte Carlo simulation, restricted to $A$, provides a viable approximation. In this context the large deviation principle for the sequence of empirical measures can be applied to quantify the performance of the Monte Carlo algorithm. Sanov's theorem \cite[Theorem 2.2.1]{Dupuis97} states that the sequence $\{ \empd_{n} \}$ satisfies the large deviation principle on $\M_{1}$ with rate function given by the relative entropy $\re(\cdot \mid F)$, defined by
$$ \re (G \mid F) = \int \log \frac{dG}{dF}  dG, \quad G \in \M_{1}. $$
More precisely, for $A \in \calB(\M_{1})$
\begin{align*}
	- I(A^{\circ}) & \leq \liminf_{n \to \infty} \frac{1}{n} \log \Prob(\empd_{n} \in A^{\circ}) \\
	&\leq \limsup_{n \to \infty} \frac{1}{n} \log \Prob(\empd_{n} \in \bar{A}) \leq - I(\bar{A}).
\end{align*} 
Taking $A = U_{G}$ to be a small neighborhood of $G$ one can interpret, for $n$ large, 
 $\exp\{-n I(G)\}$, where $I(G) = \re(G \mid F)$, as an approximation of the probability that the empirical measure looks like a typical sample from $G$. If it is undesirable that the empirical measure looks like a typical sample from a specific distribution $G$, say the objective is to have
\begin{align*}
	\Prob(\empd_{n} \in U_{G}) \leq \delta, 
\end{align*}
then, by the same reasoning as above, the sample size must be selected sufficiently large that
\begin{equation}\label{eq:nG}
	n \gtrapprox  \frac{1}{I(G)}(-\log \delta).
\end{equation}
Notice that taking $A = A _{\epsilon}$ as defined above in Sanov's theorem one can recover \eqref{eq:nCramer}. 

In the context of a general functional $\Phi$ Sanov's theorem may also be applied. Either one would try to protect against a certain undesirable shape $G$ of the empirical measure by selecting $n$ as in \eqref{eq:nG} or by looking at error probabilities corresponding to sets of the form
$$ A_{\epsilon} = \{ G \in \mathcal{M} _1: |\Phi(G) - \Phi(F)| \geq \epsilon \Phi (F) \}. $$
It may be pointed out that when considering a set such as $A_{\epsilon}$ the rate is given by 
\begin{align*}
	I(A_{\epsilon}) = \inf_{G \in A_{\epsilon}} I(G).
\end{align*}
If the infimum is attained at $G_{\epsilon}$, then limiting the probability of $A_{\epsilon}$ corresponds precisely to protecting against $G_{\epsilon}$. 
\subsection{Importance sampling}
The basic idea of importance sampling is to draw samples from a sampling distribution that is more likely to generate samples from the desired region, thus hopefully improving accuracy compared to standard Monte Carlo. Suppose that the goal is to evaluate $\Phi(F)$ for some functional $\Phi$. For simplicity, start by considering the case of an expectation, $\Phi(F) = F(f) = \int f(x) F(dx)$ for an $F$-integrable, non-negative function $f: \calX \to \R _+ $. Let $\tilde{F}$ be the chosen sampling distribution. For $\tilde{F}$ to be a feasible sampling distribution it must hold that $F \ll \tilde{F}$ on the support of $f$. If this holds, then the Radon-Nikodym derivative $dF/d\tilde F$ exists on $\{f > 0\} $ and it is possible to define the weight function 
\begin{equation}
\label{eq:weight_func}
w(x) = \frac{dF}{d\tilde{F}}(x) I\{ f(x) > 0\}.
\end{equation}
Let $\tilde X_1, \dots, \tilde X_n$ be an independent sample from $\tilde{F}$. The weighted empirical measure corresponding to the importance sampling algorithm is 
$$ \edfis ^w _n = \frac{1}{n} \sum _{k=1} ^n w(\tilde X_k)\delta _{\tilde X_k}.$$
Note that in contrast to standard Monte Carlo $\edfis _n^w$ is typically not  a probability measure.  The importance sampling estimator of $F(f)$ is the plug-in estimator
\begin{align*}
\edfis _n ^w (f) = \frac{1}{n} \sum _{i=1} ^n w(\tilde X_i)f(\tilde X_i).
\end{align*}

Let $\widetilde \Prob$ and $\widetilde \E$ denote the probability and expectation when $\tilde X_{1}, \tilde X_{2}, \dots $ are sampled from the sampling distribution $\tilde F$. If $\widetilde \E[\exp\{\theta w(\tilde X)f(\tilde X)\}] < \infty$ for $\theta$ in a neighborhood of the origin, then Cram\'er's theorem implies that $\edfis^{w}_{n}(f)$ satisfies the large deviation principle with rate function 
\begin{align*}
	I^{w}(x) &= \sup_{\theta} \{ \theta x - \kappa^{w}(\theta) \},
\end{align*}
where $\kappa ^w$ is the associated logarithmic moment generating function,
\begin{align*}
	\kappa^{w}(\theta) &= \log \widetilde \E[\exp\{\theta w(\tilde X)f(\tilde X)\}].  
\end{align*}

Similar to the Monte Carlo illustration, suppose that a relative precision $\epsilon$ is desired in the estimate with probability at least $1-\delta$. That is, the sample size $n$ must be selected sufficiently high that
$$ \widetilde \Prob (|\edfis^{w}_n(f) - F(f)|\geq \epsilon F(f)) \leq \delta.$$
Just as before Cram\'er's theorem, with $A_{\epsilon}$ denoting the complement of the open ball of radius $\epsilon F(f)$ centered at $F(f)$,  implies that the sample size must satisfy
\begin{equation*}
	n \gtrapprox  \frac{1}{I^{w}(A _{\epsilon})}(-\log \delta).
\end{equation*}
The choice of the sampling distribution enters in the rate function through the weights $w = (dF/d\tilde F)I\{f > 0\}$. The improvement over standard Monte Carlo can be quantified by comparing the rate function $I$ corresponding to Monte Carlo and the rate function $I^{w}$ corresponding to importance sampling. Furthermore, a good choice of the sampling distribution in the importance sampling algorithm is one that maximizes the rate $I^{w}(A _{ \epsilon})$.

To extend the analysis to more general functionals it is desirable to have an analogue of Sanov's theorem for the weighted empirical measures $\edfis^{w}_{n}$. In contrast to Monte Carlo one cannot expect that the weighted empirical measure $\edfis^{w}_{n}$ is a good approximation to $F$ everywhere. Rather, the sampling distribution is selected to obtain good precision in the important part of the space. If the objective is to compute $\Phi(F)$ for some functional $\Phi$, then it suffices that $\edfis^{w}_{n}$ approximates $F$ in the region that largely determines $\Phi(F)$. For this purpose a non-negative measurable function $f: \calX \to \R _+$ is introduced, called the \emph{importance function}. 

The rough statement that $\edfis _n ^w$ is close to $F$ in the important region is made precise by saying that the measure $\edfis ^{wf} _n$ is close to $F^f$ in the space $\M = \M(\calX)$ of finite measures, where 
\begin{align*}
	\edfis^{wf}_{n} = \frac{1}{n}\sum_{k=1}^{n} w(X_{k})f(X_{k})\delta_{X_{k}},
\end{align*}
and $F^{f}$ is the finite measure given by
\begin{equation*}
	F^f(g) = \int  g(x)f(x) dF(x),
\end{equation*}
for each bounded measurable $g: \calX \to \R$. In order for an efficiency analysis as the one described above to be feasible, the main objective is to establish an analogue of Sanov's theorem for the weighted empirical measures. 
\section{A Laplace principle for weighted empirical measures}\label{sec:Main_results}
In this section the main theoretical result of this paper is stated. It is an extension of Sanov's theorem to the weighted empirical measures arising in importance sampling, stated as a Laplace principle. 

A sequence of random variables ${U_n}$ taking values in a topological space $\mathcal{U}$ is said to satisfy the \emph{Laplace principle} on $\mathcal{U}$ with rate function $I$ if, for all bounded, continuous functions $h: \mathcal{U} \to \R$, 
\begin{align*}
	\lim _{n \to \infty} \frac{1}{n} \log \E [e^{-nh(U_n)}] = - \inf _{u \in \mathcal{U}} \{ h(u) + I(u)\}.
\end{align*}
When $\mathcal{U}$ is a Polish space, the Laplace principle is equivalent to the sequence satisfying the large deviation principle with the same rate function \cite[Theorems 1.2.1, 1.2.3]{Dupuis97}. 
 In the case of a general topological space, the relationship between the large deviation principle and the Laplace principle is given by Varadhan's lemma and Bryc's inverse, see \cite{Dembo98} and the references therein.

Suppose that $F$ and $\tilde{F}$ are two given distributions on a complete, separable metric space $\calX$. The space $\M$ of finite measures on $\calX$ is equipped with the $\tau$-topology: $\nu_{n} \tauconv \nu$ if $\nu_{n}(g) \to \nu(g)$ for all bounded measurable $g : \calX \to \R$. To avoid the subtle measurability issues of the $\tau$-topology, see \cite{Dupuis97}, we will have to work with $\calF_{\M}$, the smallest $\sigma$-field on $\M$ with respect to which the function mappings $\nu \mapsto \int g d\nu$ are measurable. 

Let $f$ be an importance function. That is, $f$ is a non-negative $F$-integrable function characterizing the importance of different regions of $\calX$. It is assumed that $F \ll \tilde{F}$ on the support of $f$ and the weight function $w$ is defined as in 
(\ref{eq:weight_func}). Let $\M _{1}$, the space of probability measures, be equipped with the  $\tau$-topology and introduce the set 
$$\Gamma = \{ G \in \M _{1} : G(wf)  < \infty \}.$$ 
Define the mapping $\Psi$ from the subset $\Gamma \subset \M _{1} $ to $\M $ as the mapping for which $\Psi (G;\cdot)$ is the finite measure given by
\begin{align*}
	\Psi (G;g) = \int g(x)f(x)w(x) dG(x),
\end{align*}
for each bounded measurable $g: \calX \to \R$. A key observation is that $\edfis _n ^{wf} (g) = \edfis _n (wfg)$, where $\edfis _n$ is the empirical measure obtained by sampling from $\tilde F$,
\begin{align*}
\edfis _n = \frac{1}{n} \sum_{k=1}^{n} \delta_{\tilde X_{k}}.
\end{align*} 
Therefore 
$\edfis _n ^{wf} (\cdot) = \Psi(\edfis _n; \cdot)$. Note also that 
$\edfis _n$ belongs to $\Gamma$ with probability $1$.

Let $\Delta \subset \M _1$ be the set 
\begin{equation}
\label{eq:Delta}
\Delta = \{ G \in \M _1 :  \re (G \mid \tilde{F} ) < \infty \}.
\end{equation}
We are now ready to introduce the rate function. Let $I: \M \mapsto [0,\infty]$ be the function defined by
\begin{equation}
\label{eq:Rate}
 I(\nu) = \inf \{ \re(G \mid \tilde{F}): G \in \Gamma \cap \Delta, \Psi(G) = \nu \},
 \end{equation}
when such a $G$ exists and $I(\nu) = \infty$ otherwise.
Proposition \ref{prop:Rate} below states that  $I$ has sequentially compact level sets. 
Our main result is the following. 

\begin{theorem}[Laplace principle for weighted empirical measures]
\label{thm:Laplace}
Let $F$ and $\tilde{F}$ be given as above and let $f$ be an importance function. Suppose that 
\begin{enumerate}
\item[(i)] there exists a function $U: \calX \to [0,\infty]$ such that
$\int  e^{U(x)} d\tilde{F}(x) < \infty$ and $U$ has relatively compact level sets,
\item[(ii)]  $\int e^{\alpha w(x)f(x)}d\tilde{F}(x) < \infty$ for all $\alpha > 0$.
\end{enumerate}
The sequence $\{ \edfis ^{wf} _n\} $ of weighted empirical measures satisfies the 
Laplace principle on $\M$ equipped with the $\tau$-topology. For all bounded  
continuous functions $h: \M \to \R$, measurable on $(\M, \calF_{\M})$, 
\begin{equation}
\label{eq:Laplace_thm}
 \lim _{n \to \infty} \frac{1}{n} \log \widetilde\E [e ^{-nh(\edfis ^{wf} _n)}] = -\inf_{\nu \in \M} \{ h(\nu) + I(\nu)\},
\end{equation}
with the rate function $I$ given by \eqref{eq:Rate}.
\end{theorem}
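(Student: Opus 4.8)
The plan is to establish the Laplace principle directly by the weak convergence approach of \cite{Dupuis97}, working throughout on $(\M,\calF_{\M})$ so that all the functionals below are well defined. The starting point is the variational representation of the prelimit Laplace functional: writing $\edfis_n^{wf}=\Psi(\edfis_n)$,
\begin{align*}
 -\frac1n\log\widetilde\E\big[e^{-nh(\edfis_n^{wf})}\big]
   =\inf\ \widetilde\E\Big[\frac1n\sum_{i=1}^{n}\re(\bar\mu_i^{n}\mid\tilde F)+h\big(\Psi(\bar{L}^{n})\big)\Big],
\end{align*}
where the infimum is over controlled sequences $\bar X_1^{n},\dots,\bar X_n^{n}$, $\bar\mu_i^{n}$ is the conditional law of $\bar X_i^{n}$ given $\bar X_1^{n},\dots,\bar X_{i-1}^{n}$, and $\bar{L}^{n}=\frac1n\sum_{i=1}^{n}\delta_{\bar X_i^{n}}$ (controls whose conditional laws fail to be absolutely continuous with respect to $\tilde F$ contribute $+\infty$ and may be discarded). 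The theorem reduces to showing that the right-hand side converges to $\inf_{\nu\in\M}\{h(\nu)+I(\nu)\}$. One observation used repeatedly: by (ii) and the entropy inequality $G(wf)\le\alpha^{-1}\left(\log\int e^{\alpha wf}\,d\tilde F+\re(G\mid\tilde F)\right)$, one has $\Delta\subset\Gamma$, so that $\Gamma\cap\Delta=\Delta$, $\Psi(G)$ is a genuine finite measure for every $G\in\Delta$, and $I(\nu)=\inf\{\re(G\mid\tilde F):G\in\Delta,\ \Psi(G)=\nu\}$.

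For the \emph{Laplace lower bound} (equivalently, an upper bound on the prelimit functional) I would use constant controls. Given $\epsilon>0$, pick $\nu_0\in\M$ and $G_0\in\Delta$ with $\Psi(G_0)=\nu_0$ and $h(\nu_0)+\re(G_0\mid\tilde F)\le\inf_{\nu}\{h(\nu)+I(\nu)\}+\epsilon$, and take $\bar\mu_i^{n}\equiv G_0$: the cost is then exactly $\re(G_0\mid\tilde F)$ and $\bar{L}^{n}$ is the empirical measure of an i.i.d.\ sample from $G_0$, so it remains to check $h(\Psi(\bar{L}^{n}))\to h(\nu_0)$. Since $h$ is $\tau$-continuous at $\nu_0$, there are finitely many bounded measurable $g_1,\dots,g_k$ and $\delta>0$ such that $|\mu(g_j)-\nu_0(g_j)|<\delta$ for all $j$ forces $|h(\mu)-h(\nu_0)|<\epsilon$; each $wfg_j$ is $G_0$-integrable by the entropy inequality above, so by the strong law $\Psi(\bar{L}^{n})(g_j)=\bar{L}^{n}(wfg_j)\to G_0(wfg_j)=\nu_0(g_j)$ almost surely for every $j$, and hence $h(\Psi(\bar{L}^{n}))\to h(\nu_0)$ almost surely and, being bounded, in expectation. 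Letting $\epsilon\downarrow0$ gives the Laplace lower bound.

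For the \emph{Laplace upper bound} (a matching lower bound on the prelimit functional, which must hold for every near-optimal choice of controls) I would take controls $\epsilon$-optimal in the representation, so that $-\frac1n\log\widetilde\E[e^{-nh(\edfis_n^{wf})}]\ge\widetilde\E[\frac1n\sum_i\re(\bar\mu_i^{n}\mid\tilde F)+h(\Psi(\bar{L}^{n}))]-\epsilon$; since $h$ is bounded the cost is uniformly bounded, $\sup_n\widetilde\E[\frac1n\sum_i\re(\bar\mu_i^{n}\mid\tilde F)]\le C<\infty$. Assumption (i) then gives tightness, in the weak topology on $\M_1$, of both $\bar{L}^{n}$ and the conditional-mean measures $\bar\mu^{n}=\frac1n\sum_i\bar\mu_i^{n}$: applying the entropy inequality to $U$ and using that its level sets are relatively compact, $\widetilde\E[\bar{L}^{n}(\{U>r\})]\le r^{-1}\left(\log\int e^{U}\,d\tilde F+C\right)\to0$ as $r\to\infty$. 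Pass to a subsequence along which the lower limit is attained and $(\bar{L}^{n},\bar\mu^{n})$ converges in distribution to $(\mu,\mu)$ for a random $\mu\in\M_1$ (the limits coincide since $\bar{L}^{n}(g)-\bar\mu^{n}(g)\to0$ in $L^{2}$ for bounded $g$, being an average of martingale differences), and, by Skorokhod's representation, take this convergence almost sure. Convexity of $\re(\cdot\mid\tilde F)$ in its first argument, its lower semicontinuity, and Fatou's lemma give $\mu\in\Delta$ a.s.\ and $\liminf_n\widetilde\E[\frac1n\sum_i\re(\bar\mu_i^{n}\mid\tilde F)]\ge\widetilde\E[\re(\mu\mid\tilde F)]$; combined with $h(\Psi(\bar{L}^{n}))\to h(\Psi(\mu))$ (discussed next), this yields
\begin{align*}
 \liminf_n\widetilde\E\Big[\frac1n\sum_i\re(\bar\mu_i^{n}\mid\tilde F)+h(\Psi(\bar{L}^{n}))\Big]
   &\ge\widetilde\E\big[\re(\mu\mid\tilde F)+h(\Psi(\mu))\big]\\
   &\ge\inf_{G\in\Delta}\{\re(G\mid\tilde F)+h(\Psi(G))\}=\inf_{\nu\in\M}\{h(\nu)+I(\nu)\},
\end{align*}
the final equality being the definition \eqref{eq:Rate} of $I$ together with $\Gamma\cap\Delta=\Delta$; letting $\epsilon\downarrow0$ gives the Laplace upper bound.

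The step I expect to be the main obstacle is the passage $h(\Psi(\bar{L}^{n}))\to h(\Psi(\mu))$ in the \emph{$\tau$-topology}: $h$ probes measures against bounded \emph{measurable} functions, while the convergence $\bar{L}^{n}\to\mu$ furnished by (i) is only weak and controls bounded \emph{continuous} ones. This is exactly where the $\calF_{\M}$-measurability framework and assumption (ii) are needed. On the one hand, (ii) makes $wf$ uniformly integrable with respect to $\{\bar{L}^{n}\}$ (split $wf=wf\wedge M+(wf-M)^{+}$ and bound $\widetilde\E[\bar{L}^{n}((wf-M)^{+})]=\widetilde\E[\bar\mu^{n}((wf-M)^{+})]$ via the entropy inequality, taking $\alpha$ large then $M$ large), so that $\Psi$ is continuous along the convergence. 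On the other hand, one upgrades the relevant convergence from the weak to the $\tau$-topology by using that sublevel sets of $\re(\cdot\mid\tilde F)$ are compact in the $\tau$-topology (by a Dunford--Pettis argument), so that the weak and $\tau$ topologies coincide on them, together with an approximation of bounded measurable test functions by continuous ones in $L^{1}(\tilde F)$ whose error is controlled uniformly in $n$ through the exponential integrability (ii). Once this is in hand the two bounds match, and, with Proposition \ref{prop:Rate} supplying that $I$ has sequentially compact level sets, one obtains the Laplace principle \eqref{eq:Laplace_thm} with rate function $I$.
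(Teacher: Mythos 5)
Your proposal is correct and follows essentially the same route as the paper: the stochastic-control representation of the prelimit functional, the lower bound via constant controls and a law of large numbers for the weighted measures, and the upper bound via the uniform entropy estimate, tightness from condition (i), the weak-to-$\tau$ upgrade, and the truncation/uniform-integrability argument for $wf$ under condition (ii). The only (minor, and valid) addition is your observation that (ii) forces $\Delta\subset\Gamma$, which slightly streamlines the bookkeeping but does not change the argument.
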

The proof is given in Section \ref{sec:proofs}. 
\begin{remark}
Condition (i) is a  version of Condition 8.2.2 in \cite{Dupuis97}, adapted to the case of independent and identically distributed variables.
In the case of real-valued random variables it is a very mild assumption. Take, for example,
$$ U(x) = \max \{ \alpha \log (|x|), 0 \},$$
for some $\alpha > 0$. Then $U$ has relatively compact level sets and the condition of a finite expectation of $e^{U(X)}$ is in this case weaker than
$$ \mathbb{E} [|X|^{\alpha}] < \infty, $$
for some $\alpha > 0$. 
\end{remark}
\begin{remark}
\label{re:Laplace}
The right-hand side of  \eqref{eq:Laplace_thm} can be expressed as
$$ \inf_{\nu \in \M} \{ h(\nu) + I(\nu) \} = \inf _{G \in \Gamma \cap \Delta} \{ h(\Psi(G)) + \re(G | \tilde{F}) \}. $$
Although the expression on the left is the standard way to express the limit of the Laplace principle, the expression on the right better suits the weak convergence approach to large deviations that will be adopted throughout the proof. 
\end{remark}

Consider the special case when the function $wf$ is bounded. Then, $\Psi : \Gamma \to \M$ is continuous when both spaces are equipped with the $\tau$ topology and Theorem \ref{thm:Laplace} follows essentially from a standard application of the contraction principle, see, e.g., \cite[Theorem 1.3.2]{Dupuis97}. The main difficulty is to show that the Laplace principle holds also in the general case.
\section{Applications in performance analysis} \label{sec:applications}
In this section we propose one way in which the Laplace principle of Theorem \ref{thm:Laplace} can be used to characterize the performance of an importance sampling algorithm. We outline a method for analyzing the performance over a collection of subsets of a target region $A$. If the sampling distribution is designed for a target set $A$, then one can expect to have good performance for subsets $C$ of $A$ that are not too small relative to $A$. To illustrate the performance analysis based on this criterion we consider some well-known examples. In addition to explicitly computing the rate function associated with Theorem \ref{thm:Laplace} we obtain estimates on the reduction in computational cost when comparing standard Monte Carlo and importance sampling. We also discuss briefly the rare-event limit, when the target set has small probability. The section ends with a brief discussion on performance analysis for importance sampling algorithms designed for computing the quantile of a distribution.  
\subsection{Performance over a collection of subsets}
In this section we are interested in the performance of importance sampling algorithms over a region $A \subset \calX$, reflected in the importance function $f(x) = I\{x \in A\}$. The ideal is that the weighted empirical measure $\edfis ^{wf} _{n}$ is close to $F$ on all measurable subsets of $A$. For large $n$ we can imagine that the weighted empirical measure looks like a typical sample from a measure $\nu$, which is absolutely continuous with respect to $F$. The performance of the importance sampling algorithm is good if it is likely that $\edfis^{wf}_{n}$ looks like a typical sample from some $\nu$ belonging to a set of measures  for which the likelihood ratio $d\nu/dF$ is close to $1$ on $A$. For given $\epsilon >0$ and $\delta >0$ (where $\delta = \delta' F(A)$ for some $\delta'>0$ is a reasonable choice), consider the sets
\begin{align}
	A_{\epsilon, \delta} &= \big\{ \nu \in \M : |d\nu/dF(x) -1| \geq \epsilon \text{ for } x \in \,\text{some }C \subset A \text{ with } F( C) \geq \delta \big \}, \nonumber \\
	A^+ _{\epsilon, \delta} &= \big \{\nu \in \M : d\nu/dF(x) \geq 1 + \epsilon \text{ for } x \in \,\text{some }C \subset A \text{ with } F( C) \geq \delta \big \},  \label{eq:alt1}\\ 
	A^- _{\epsilon, \delta} &= \big \{\nu \in \M: d\nu/dF(x) \leq 1 - \epsilon \text{ for } x \in \,\text{some }C \subset A \text{ with } F( C) \geq \delta \big \}.   \label{eq:alt2}
\end{align}
The rate $I(A_{\epsilon, \delta})$, with $I$ is as in Theorem \ref{thm:Laplace},  can be used to evaluate the performance of the importance sampling algorithm. The interpretation is that  $e^{-nI(A_{\epsilon, \delta})}$ is roughly the probability that $\edfis^{wf}_{n}$ provides an approximation of $F$ with relative error greater than $\epsilon$ for some subset(s) $C \subset A$ that is not too small in the sense that $F( C) \geq  \delta$. The sets $A^+ _{\epsilon, \delta}$ and $A^-_{\epsilon, \delta}$ have similar interpretations for overestimation and underestimation, respectively. The sets $A^+ _{\epsilon, \delta}$ and $A^-_{\epsilon, \delta}$ are somewhat easier to analyze than $A_{\epsilon, \delta}$, so for the sake of illustration they will be studied throughout the rest of this section. For brevity, in the examples that follows we will work exclusively with $A^{+}_{\epsilon, \delta}$. 

 If $G$ is a probability measure such that $\nu = \Psi(G)$, then $d\nu/dF = dG/d\tilde F$ on $A$ and
\begin{align*}
	I(A^+ _{\epsilon, \delta}) = \inf \Big \{ \re(G \mid \tilde F): \frac{dG}{d\tilde F}(x) \geq 1+\epsilon \text{ for } x \in \, \text{some } C \subset A, \ F( C) \geq \delta \Big \}. 
\end{align*}
Let us compute $I(A^{+}_{\epsilon, \delta})$. To start off, consider a fixed set $C \subset A$. 
\begin{lemma} \label{lemma:rateExp}
	Given $C \subset A$ it holds that
\begin{align*}
	& \inf \Big \{ \re(G \mid \tilde F):  \frac{dG}{d\tilde F}(x) \geq 1+\epsilon \text{ for } x \in C \Big \} \\ & \quad = (1+\epsilon) \tilde F( C)\log(1+\epsilon) + (1-(1+\epsilon)\tilde F( C))\log\Big(\frac{1-(1+\epsilon)\tilde F(C )}{1-\tilde F( C)}\Big),
\end{align*}
where the infimum is attained for the probability measure $G^{*}$ with 
\begin{align*}
	\frac{dG^{*}}{d\tilde F}(x) &= 1+\epsilon, \text{ for } x \in C, \\ \frac{dG^{*}}{d\tilde F}(x) &= \frac{1-(1+\epsilon)\tilde F(C )}{1-\tilde F( C)}, \text{ for } x \in C^{c}.
\end{align*}
\end{lemma}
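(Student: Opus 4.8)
The plan is to reduce the infimum to a one-dimensional convex optimization problem by showing that the optimal $G^*$ can be taken to depend on $x$ only through the indicator $I\{x \in C\}$. First I would observe that relative entropy $\re(G \mid \tilde F)$ depends on $G$ only through its Radon--Nikodym derivative $g = dG/d\tilde F$, via $\re(G \mid \tilde F) = \int g \log g \, d\tilde F$, and that the constraint $g(x) \geq 1+\epsilon$ for $x \in C$ together with the normalization $\int g \, d\tilde F = 1$ are the only restrictions. Given any feasible $g$, I would replace it by its $\tilde F$-conditional averages $\bar g = \tilde F(g \mid C)$ on $C$ and $\tilde F(g \mid C^c)$ on $C^c$; by Jensen's inequality applied to the convex function $u \mapsto u \log u$, this averaging does not increase $\re(G \mid \tilde F)$, it preserves the normalization, and it preserves feasibility since averaging values all $\geq 1+\epsilon$ keeps the average $\geq 1+\epsilon$. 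Hence it suffices to optimize over two-valued densities.

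Next I would parametrize such densities by the value $a = g|_C \geq 1+\epsilon$; normalization forces $g|_{C^c} = (1 - a\tilde F(C))/(1 - \tilde F(C))$, which must be nonnegative, so $a \leq 1/\tilde F(C)$. Writing $p = \tilde F(C)$, the objective becomes
\begin{equation*}
	\phi(a) = p\, a \log a + (1-p) \cdot \frac{1-ap}{1-p}\log\Big(\frac{1-ap}{1-p}\Big).
\end{equation*}
I would then check that $\phi$ is convex in $a$ on $[1, 1/p]$ (it is a sum of compositions of the convex map $u \mapsto u\log u$ with affine functions of $a$, weighted by nonnegative constants), compute $\phi'(a)$, and verify that $\phi'(1) = 0$ with $\phi$ increasing on $[1, 1/p]$; consequently the constrained minimum over $a \geq 1+\epsilon$ is attained at the boundary $a = 1+\epsilon$. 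Substituting $a = 1+\epsilon$ into $\phi$ yields exactly the claimed expression, and the corresponding two-valued density is the stated $dG^*/d\tilde F$. One should also note the feasibility caveat that this requires $(1+\epsilon)\tilde F(C) \leq 1$ for $G^*$ to be a genuine probability measure (otherwise the infimum set is empty and the value is $+\infty$); I would state this implicitly via the logarithm being well-defined.

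The main obstacle, such as it is, is the reduction step: making rigorous that restricting to densities measurable with respect to the two-set partition $\{C, C^c\}$ loses nothing. This is a clean Jensen/conditional-expectation argument, but one must be slightly careful that the conditional expectations are well-defined (which needs $0 < \tilde F(C) < 1$ — the degenerate cases $\tilde F(C) \in \{0,1\}$ are handled separately and trivially) and that the infimum is genuinely attained rather than merely approached, which follows because the feasible set of pairs $(a, g|_{C^c})$ is compact and $\phi$ is continuous. Everything after that is the routine calculus of a single convex function on an interval.
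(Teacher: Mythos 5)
Your argument is correct and is essentially the paper's own proof: both use Jensen's inequality for $s\mapsto s\log s$ on the partition $\{C, C^c\}$ to reduce to two-valued densities (equivalently, to the Bernoulli relative entropy as a function of $G(C)$), and then minimize the resulting one-dimensional convex function over the constrained interval, where the minimum sits at the boundary $a=1+\epsilon$, i.e.\ $G(C)=(1+\epsilon)\tilde F(C)$. Your added remarks on the degenerate cases $\tilde F(C)\in\{0,1\}$ and the feasibility requirement $(1+\epsilon)\tilde F(C)\le 1$ are correct refinements that the paper leaves implicit.
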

\begin{proof}
	For any probability measure $G$ with $\frac{dG}{d\tilde F}(x) \geq 1+\epsilon$ on $C$, convexity of the function $\varphi(s) = s\log s$ and Jensen's inequality implies that
\begin{align*}
	\re(G \mid \tilde F) &= \tilde F( C) \int_{C} \varphi \biggl(\frac{dG}{d\tilde F}(x) \biggr) \frac{d \tilde F(x)}{\tilde F( C)} +  
	\tilde F( C^{c}) \int_{C^{c}} \varphi \biggl(\frac{dG}{d\tilde F}(x) \biggr) \frac{d \tilde F(x)}{\tilde F( C^{c})} \\
	& \geq \tilde F(C )\biggl( \frac{G(C )}{\tilde F( C)} \log \frac{G( C)}{\tilde F( C)}\biggr) + 
	\tilde F(C^{c} )\biggl( \frac{G(C^{c} )}{\tilde F( C^{c})} \log \frac{G( C^{c})}{\tilde F( C^{c})}\biggr)\\
	& = G( C)(\log G( C) - \log \tilde F(C )) \\
	&\quad + (1-G(C )) [\log(1-G(C )) - \log(1-\tilde F( C))].
\end{align*}
This is convex as a function of $G( C)$ and the minimizer is $G(C ) = (1+\epsilon)\tilde F(C )$.  We conclude that the lower bound
\begin{align*}
&\inf \Big \{ \re(G \mid \tilde F):  \frac{dG}{d\tilde F}(x) \geq 1+\epsilon \text{ for } x \in C \Big \} \\ & \quad \geq  (1+\epsilon) \tilde F( C)\log(1+\epsilon) + (1-(1+\epsilon)\tilde F( C))\log\Big(\frac{1-(1+\epsilon)\tilde F(C )}{1-\tilde F( C)}\Big),
\end{align*}
holds. It is straightforward to check that the lower bound is attained by $G^{*}$. This completes the proof.  
\end{proof}

Denote by $J_+$ the set function given by
\begin{align*}
	J_+(C ) = \inf _{G \in \Gamma \cap \Delta} \Big \{ \re(G \mid \tilde F): \frac{dG}{d\tilde F}(x) \geq 1 + \epsilon \text{ for } x \in C \Big \}.  
\end{align*}
The rate $I(A^{+}_{\epsilon, \delta})$ can be computed by  minimizing the function $J_+$ over the collection of feasible sets.
\begin{lemma}\label{lem:h1} $I(A^+ _{\epsilon, \delta}) = \inf\{ J_+( C) : C \subset A, F(C ) \geq \delta\}. $
\end{lemma}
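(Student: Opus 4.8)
The proof of Lemma~\ref{lem:h1} amounts to interchanging two infima: the outer one defining $I(A^+_{\epsilon,\delta})$ as an infimum of $\re(G\mid\tilde F)$ over measures $\nu=\Psi(G)\in A^+_{\epsilon,\delta}$, and the inner quantification ``for $x$ in some $C\subset A$ with $F(C)\ge\delta$''. Since $F\ll\tilde F$ on $A$ and $f=I\{x\in A\}$, the condition $d\nu/dF\ge 1+\epsilon$ on $C$ is, for $\nu=\Psi(G)$ with $G$ a probability measure, precisely the condition $dG/d\tilde F\ge 1+\epsilon$ on $C$; this is the identity $d\nu/dF=dG/d\tilde F$ on $A$ already noted in the paragraph preceding Lemma~\ref{lemma:rateExp}. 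So first I would record that
\[
 I(A^+_{\epsilon,\delta}) = \inf\Big\{\,\inf_{G\in\Gamma\cap\Delta}\big\{\re(G\mid\tilde F): \tfrac{dG}{d\tilde F}\ge 1+\epsilon \text{ on }C\big\} : C\subset A,\ F(C)\ge\delta\,\Big\},
\]
and then observe that the inner infimum is exactly $J_+(C)$ by definition. That gives the claimed identity directly.

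**Filling the gap.** The one point that needs care is that the definition of $I$ in \eqref{eq:Rate} ranges over \emph{all} $G\in\Gamma\cap\Delta$ with $\Psi(G)=\nu$, so in principle $I(A^+_{\epsilon,\delta})$ could be realized along a sequence $\nu_k\in A^+_{\epsilon,\delta}$ with witnessing sets $C_k$ of varying shape, and one must be sure that no value strictly below $\inf\{J_+(C):C\subset A, F(C)\ge\delta\}$ can sneak in. For the inequality $I(A^+_{\epsilon,\delta})\le \inf\{J_+(C):\dots\}$: given any feasible $C$ and any $G$ with $dG/d\tilde F\ge 1+\epsilon$ on $C$, set $\nu=\Psi(G)$; then $\nu\in A^+_{\epsilon,\delta}$ (this same $C$ witnesses membership, using $F(C)\ge\delta$), and $I(\nu)\le\re(G\mid\tilde F)$, so taking infima over $G$ and then over $C$ gives the bound. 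For the reverse inequality: take any $\nu\in A^+_{\epsilon,\delta}$ with $I(\nu)<\infty$ and any $G\in\Gamma\cap\Delta$ with $\Psi(G)=\nu$ achieving $\re(G\mid\tilde F)$ close to $I(\nu)$; by definition of $A^+_{\epsilon,\delta}$ there exists $C\subset A$ with $F(C)\ge\delta$ and $d\nu/dF\ge 1+\epsilon$ on $C$, hence $dG/d\tilde F\ge 1+\epsilon$ on $C$, so $\re(G\mid\tilde F)\ge J_+(C)\ge\inf\{J_+(C'):\dots\}$; letting $\re(G\mid\tilde F)\downarrow I(\nu)$ and then taking the infimum over $\nu$ completes it.

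**Main obstacle.** The only real subtlety is the measurability/``$\ll$'' bookkeeping on $A$ versus its complement. One must confirm that for $\nu\in A^+_{\epsilon,\delta}$ the witnessing $C$ can be taken inside $A$ (it is, by the definition of the set) and that the correspondence $\nu\leftrightarrow G$ via $\Psi$ really does identify $d\nu/dF$ with $dG/d\tilde F$ \emph{on $A$}—outside $A$, $f=0$ kills everything, so $\nu$ gives no constraint there and $G$ is free, which is exactly why the extremal $G^*$ of Lemma~\ref{lemma:rateExp} is allowed to redistribute mass on $C^c$. None of this is hard, but it is the place where an inattentive argument could go wrong. I expect the bulk of the write-up to be the two-sided inequality above, with the identity $d\nu/dF=dG/d\tilde F$ on $A$ cited rather than re-derived.
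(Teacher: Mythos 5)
Your argument is correct and is essentially the paper's own proof: one inequality by fixing a feasible $C$ and noting that this only shrinks the feasible set for $I(A^+_{\epsilon,\delta})$, the other by extracting a witnessing set $C^*$ from a near-optimal $G^*$ and bounding $\re(G^*\mid\tilde F)\geq J_+(C^*)$. The paper likewise takes the identity $d\nu/dF=dG/d\tilde F$ on $A$ as already established before the lemma, so your slightly more explicit bookkeeping of the $\nu\leftrightarrow G$ correspondence adds care but no new ideas.
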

\begin{proof} For any $C^* \subset A$ such that $F(C^*) \geq \delta$ we have
\begin{align*}
	I(A^+ _{\epsilon, \delta}) &= \inf \Big \{ \re(G \mid \tilde F): \frac{dG}{d\tilde F}(x) \geq 1 + \epsilon \text{ for } x \in \text{ some } C \subset A, \ F( C) \geq \delta \Big \} \\
	&\leq \inf \Big \{ \re(G \mid \tilde F):  \frac{dG}{d\tilde F}(x) \geq 1 + \epsilon \text{ for } x \in C^* \Big \} \\
	&= J_+(C^*).
\end{align*}
Taking infimum over feasible sets $C^{*}$ leads to 
$$ I(A^+ _{\epsilon, \delta}) \leq \inf \big \{ J_+ ( C^{*}) : C^{*} \subset A, F(C^{*}) \geq \delta \big \}. $$
It remains to show the reverse inequality. For every $\eta > 0$ there exists a probability measure $G^* \in \Delta \cap \Gamma$ and a corresponding set $C^* \subset A$, with 
 $F(C^*) \geq \delta$, such that
$$ I(A^+ _{\epsilon, \delta}) + \eta \geq \re(G^* | \tilde{F}) \geq J_+(C^*) \geq \inf\{ J_{+}( C): C \subset A \text{ and } F( C) \geq \delta\}.$$
Since $\eta > 0$ is arbitrary the proof is complete.
\end{proof}
 
The next result characterizes the minimizing set $C$ of Lemma \ref{lem:h1} in terms of the likelihood ratio between $F$ and $\tilde F$. 

 \begin{proposition} \label{prop:Set}
 For any $t \geq 0$, let
\begin{align*}
	C_{t} = \Big \{x \in A: \frac{dF}{d\tilde F}(x) \geq t \Big \}.
\end{align*}
For any $\delta > 0$, if there exists $\tilde t_{\delta}$ such that $F(C_{\tilde t_{\delta}}) = \delta$, then 
the infimum in Lemma \ref{lem:h1} is attained by $C_{\tilde t_{\delta}}$. That is, 
\begin{align*}
	I(A^{+}_{\epsilon, \delta}) = \inf\{ J_+( C) : C \subset A, F(C ) \geq \delta\} = J_{+}(C_{\tilde t_{\delta}}).
\end{align*}
In general, 
\begin{align*}
	\sup\{J_{+}(C_{t}): t\geq 0, F(C_{t}) \leq \delta\} \leq I(A^{+}_{\epsilon, \delta}) \leq  \inf\{J_{+}(C_{t}): t\geq 0, F(C_{t}) \geq \delta\}. 
\end{align*}
 \end{proposition}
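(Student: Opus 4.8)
The plan is to reduce the variational problem defining $I(A^{+}_{\epsilon,\delta})$ to a scalar optimization over the $\tilde F$-measure of the constraint set, and then to solve that optimization by a Neyman--Pearson argument whose solution is exactly $C_{\tilde t_\delta}$. By Lemma \ref{lem:h1} we already have $I(A^{+}_{\epsilon,\delta}) = \inf\{J_+(C): C\subset A,\ F(C)\ge\delta\}$. The first step is to note, using Lemma \ref{lemma:rateExp}, that $J_+(C)$ depends on $C$ only through $\tilde F(C)$: the minimizer $G^{*}$ produced there has density $dG^{*}/d\tilde F\le 1+\epsilon$, hence finite relative entropy and $G^{*}(wf)\le(1+\epsilon)F(A)<\infty$, so $G^{*}\in\Gamma\cap\Delta$ and the restriction $G\in\Gamma\cap\Delta$ in the definition of $J_+$ is immaterial. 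Thus $J_+(C)=g(\tilde F(C))$, where
\begin{equation*}
 g(s) = (1+\epsilon)s\log(1+\epsilon) + \big(1-(1+\epsilon)s\big)\log\frac{1-(1+\epsilon)s}{1-s}
\end{equation*}
for $0\le s\le\tfrac{1}{1+\epsilon}$, and $g(s)=\infty$ for $s>\tfrac{1}{1+\epsilon}$.

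Next I would verify that $g$ is continuous and strictly increasing on $[0,\tfrac{1}{1+\epsilon}]$. Differentiating and substituting $v=1-(1+\epsilon)s$ collapses the derivative to
\begin{equation*}
 g'(s) = (1+\epsilon)\Big(\log\big(1+\tfrac{\epsilon}{v}\big) - \frac{\epsilon/v}{1+\epsilon/v}\Big),
\end{equation*}
which is positive on $[0,\tfrac{1}{1+\epsilon})$ by the elementary inequality $\log(1+y)>y/(1+y)$ for $y>0$; continuity at the right endpoint is a one-line limit. Granting this, minimizing $J_+(C)=g(\tilde F(C))$ over $\{C\subset A: F(C)\ge\delta\}$ is the same as minimizing $\tilde F(C)$ over that class, and $J_+(C)\ge J_+(C')$ if and only if $\tilde F(C)\ge\tilde F(C')$.

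The core of the argument is then a Neyman--Pearson comparison: for $t>0$ and any $C\subset A$ with $F(C)\ge F(C_t)$, splitting $F(C)-F(C_t)=F(C\setminus C_t)-F(C_t\setminus C)$ and using $dF/d\tilde F<t$ on $A\setminus C_t$ while $dF/d\tilde F\ge t$ on $C_t$ gives
\begin{equation*}
 0\le F(C)-F(C_t)\le t\,\tilde F(C\setminus C_t)-t\,\tilde F(C_t\setminus C)=t\big(\tilde F(C)-\tilde F(C_t)\big),
\end{equation*}
so $\tilde F(C)\ge\tilde F(C_t)$ and hence $J_+(C)\ge J_+(C_t)$. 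If $\tilde t_\delta$ with $F(C_{\tilde t_\delta})=\delta$ exists, then $C_{\tilde t_\delta}$ is feasible and, by the last inequality, minimal, so $I(A^{+}_{\epsilon,\delta})=J_+(C_{\tilde t_\delta})$. In general, the upper bound follows because every $C_t$ with $F(C_t)\ge\delta$ is feasible, giving $I(A^{+}_{\epsilon,\delta})\le\inf\{J_+(C_t):F(C_t)\ge\delta\}$; the lower bound follows because, when $F(C_t)\le\delta$, every feasible $C$ has $F(C)\ge\delta\ge F(C_t)$, so $J_+(C)\ge J_+(C_t)$, and taking the infimum over $C$ and then the supremum over $t$ yields $\sup\{J_+(C_t):F(C_t)\le\delta\}\le I(A^{+}_{\epsilon,\delta})$.

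I expect the last point --- the general sandwich when no $\tilde t_\delta$ exists --- to need the most care: this is exactly when $t\mapsto F(C_t)$ has a jump straddling $\delta$ (an atom of the law of $dF/d\tilde F$ under $\tilde F$), so attainment cannot be claimed and the two inequalities must be argued separately; one should also dispose of the degenerate level $t=0$ (which occurs only if $\delta\ge F(A)$, where the lower bound is vacuous or trivial) and record that $C_t$, $F(C_t)$, $\tilde F(C_t)$ and the density $dF/d\tilde F$ are well defined $\tilde F$-a.e.\ (hence $F$-a.e.) on $A$ because $F\ll\tilde F$ there. The monotonicity computation for $g$ and the likelihood-ratio exchange are otherwise routine.
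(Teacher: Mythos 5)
Your proof is correct and follows the same overall strategy as the paper: monotonicity of $s\mapsto\gamma^{+}_{\epsilon}(s)$ reduces the problem to minimizing $\tilde F(C)$ subject to $F(C)\geq\delta$, and that problem is solved by a Neyman--Pearson argument with the likelihood-ratio level set $C_{\tilde t_\delta}$ as optimizer. The one place where you genuinely diverge is the execution of the Neyman--Pearson step: the paper relaxes the problem to randomized tests $\alpha:A\to[0,1]$ and shows the indicator of $C_{\tilde t_\delta}$ is optimal in the relaxed problem (which requires the extra remark that the relaxation is tight because the optimizer is an indicator), whereas you compare sets directly via the decomposition $F(C)-F(C_t)=F(C\setminus C_t)-F(C_t\setminus C)$ and the two-sided bound on $dF/d\tilde F$. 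Your version is more elementary and avoids the relaxation entirely; both yield the same sandwich in the general case. You also supply two verifications the paper leaves implicit --- the derivative computation showing $\gamma^{+}_{\epsilon}$ is strictly increasing, and the check that the minimizer $G^{*}$ of Lemma \ref{lemma:rateExp} lies in $\Gamma\cap\Delta$ so that the constrained and unconstrained infima defining $J_+$ coincide --- and you correctly flag the degenerate level $t=0$, which the paper's own argument (which divides by $t$) silently excludes.
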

\begin{proof} The expression for $\re(G^* \mid \tilde{F})$ in Lemma \ref{lemma:rateExp} is increasing in $\tilde{F}( C)$. Thus, minimizing $J_+(C)$ corresponds to taking infimum of $\tilde{F} (C)$. Consider the problem
$$ \inf \{ \tilde{F}(C): C \subset A, F(C) \geq \delta \}.$$
Let $\mathcal{F} _A$ be the collection of all measurable functions $\alpha: A \mapsto [0,1]$. Since indicator functions of subsets of $A$ are included in $\mathcal{F} _A$, 
\begin{align*}
& \inf \{ \tilde{F}(C): C \subset A, \ F(C) \geq \delta \} \\
& \quad \geq \inf \Big \{ \int _A \alpha (x) d \tilde{F}(x): \alpha \in \mathcal{F} _A, \int _A \alpha(x) dF(x) \geq \delta \Big \},
\end{align*}
with equality if the infimum on the right-hand side is attained for an indicator function.

Suppose that there exists $\tilde t_{\delta}$ with $F(C_{\tilde t_{\delta}}) = \delta$. 
We claim that $\alpha^* (x) = I\{ x \in C_{\tilde t_{\delta}}\}$ is the solution to the problem on the right in the last display. To see this, take an arbitrary $\alpha \in \mathcal{F} _A$ such that $\int _A \alpha(x) dF(x) \geq \delta $. 
\begin{align*}
	\int _A (\alpha(x) - \alpha^*(x)) d\tilde{F}(x) &= \int _A (\alpha(x) - \alpha^*(x))\frac{d\tilde{F}}{dF}(x)dF(x) \\
	&= \int _{C_{\tilde t_{\delta}}} (\alpha(x) - 1)\frac{d\tilde{F}}{dF}(x)dF(x) \\
	& \quad +  \int _{A \setminus C_{\tilde t_{\delta}}} \alpha(x)\frac{d\tilde{F}}{dF}(x)dF(x) \\
	&\geq \int _{C_{\tilde t_{\delta}}} (\alpha(x) - 1)\frac{1}{\tilde t_{\delta}} dF(x) +  \int _{A \setminus C_{\tilde t_{\delta}}} \alpha(x) \frac{1}{\tilde t_{\delta}} dF(x) \\ 
	&= \frac{1}{\tilde t_{\delta}} \int _A \alpha(x) dF(x) - \frac{1}{\tilde t_{\delta}} \int_{C_{\tilde t_{\delta}}} dF(x) \\
	&\geq 0,
\end{align*}
by the requirements on $\alpha$ and $\tilde t_{\delta}$. 

In the general case, it is obvious that
\begin{align*}
	\inf\{ J_+( C) : C \subset A, F(C ) \geq \delta\} \leq \inf\{J_{+}(C_{t}): t \text{ such that } F(C_{t}) \geq \delta\}. 
\end{align*}
Moreover, for any $t \geq 0$ such that $F(C_{t}) \leq \delta$ the same arguments as above yield
\begin{align*}
	\int_{A} (\alpha(x) - I\{x \in C_{t}\}) \tilde dF(x) \geq \frac{1}{t} \int_{A} \alpha(x) dF(x) - \frac{1}{t} F(C_{t}) \geq 0,
\end{align*}
for all $\alpha \in \mathcal{F}_{A}$ with $\int_{A} \alpha(x) dF(x) \geq \delta$. We conclude that
\begin{align*}
 \inf \biggl \{ \int _A \alpha (x) d\tilde{F}(x): \alpha \in \mathcal{F} _A, \int _A \alpha(x) dF(x) \geq \delta \biggr \} \geq \tilde F(C_{t}),
\end{align*}
and it follows that
\begin{align*}
	\inf\{ J_+( C) : C \subset A, F(C ) \geq \delta\} \geq J_{+}(C_{t}).
\end{align*}
The proof is completed by taking supremum over $t$ such that $F(C_{t}) \leq \delta$. 
\end{proof}

Denote by $\gamma^+ _{\epsilon}$ the function
\begin{align*}
	\gamma ^+ _{\epsilon}(s) = (1+\epsilon) s \log(1+\epsilon) + (1-(1+\epsilon)s)\log\Big(\frac{1-(1+\epsilon)s}{1-s}\Big).  
\end{align*} 
so that the expression in Lemma \ref{lemma:rateExp} coincides with $\gamma^+ _{\epsilon}(\tilde F( C))$. Then, 
\begin{align*}
	I(A^+ _{\epsilon, \delta}) = \gamma^+ _{\epsilon}(\tilde F(C_{\tilde t_{\delta}})), 
\end{align*}
where $\tilde t_{\delta}$ is such that $\tilde F(C_{\tilde t_{\delta}}) = \delta$. 
Observe that $\gamma^+ _{\epsilon}$ is an increasing function. Therefore, a good choice of sampling distribution is one that makes $\tilde F(C_{\tilde t_{\delta}})$ large. 

Next, consider the set $A^- _{\epsilon, \delta}$ in \eqref{eq:alt2}. The following results are obtained precisely as for $A ^+ _{\epsilon, \delta}$; $ J_-$ is the direct analogue of the set function $J_+$.
\begin{lemma} \label{lemma:rateExpNeg}
	Given $C \subset A$ it holds that
\begin{align*}
	&\inf \biggl \{ \re(G \mid \tilde F):  \frac{dG}{d\tilde F}(x) \leq 1- \epsilon \text{ for } x \in C \biggr \} \\ & \quad = (1-\epsilon) \tilde F( C)\log(1-\epsilon) + (1-(1-\epsilon)\tilde F( C))\log\Big(\frac{1-(1-\epsilon)\tilde F(C )}{1-\tilde F( C)}\Big),
\end{align*}
where the infimum is attained for the probability measure $G_{*}$ with 
\begin{align*}
	\frac{dG_{*}}{d\tilde F}(x) &= 1-\epsilon, \text{ for } x \in C, \\ \frac{dG_{*}}{d\tilde F}(x) &= \frac{1-(1-\epsilon)\tilde F(C )}{1-\tilde F( C)}, \text{ for } x \in C^{c}.
\end{align*}
\end{lemma}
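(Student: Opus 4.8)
The plan is to mirror the proof of Lemma~\ref{lemma:rateExp} line by line, with $1+\epsilon$ replaced by $1-\epsilon$ and the direction of the pointwise constraint reversed; here $\epsilon\in(0,1)$ so that $1-\epsilon>0$. First I would fix an arbitrary probability measure $G$ with $\frac{dG}{d\tilde F}(x)\le 1-\epsilon$ for $x\in C$, split $\re(G\mid\tilde F)$ into the contributions over $C$ and $C^{c}$, and apply Jensen's inequality to the convex function $\varphi(s)=s\log s$ on each of the two normalized pieces. Exactly as before this yields
\[
	\re(G\mid\tilde F)\ge G(C)\bigl(\log G(C)-\log\tilde F(C)\bigr)+(1-G(C))\bigl[\log(1-G(C))-\log(1-\tilde F(C))\bigr]=:\psi(G(C)),
\]
where $\psi$ is convex on $[0,1]$, with unconstrained minimizer $G(C)=\tilde F(C)$ and $\psi(\tilde F(C))=0$.

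Next I would bring in the constraint. Integrating $\frac{dG}{d\tilde F}\le 1-\epsilon$ over $C$ gives $G(C)\le(1-\epsilon)\tilde F(C)\le\tilde F(C)$. Since $\psi$ is convex with minimum at $\tilde F(C)$, it is nonincreasing on $[0,\tilde F(C)]$, so over the feasible range $G(C)\in[0,(1-\epsilon)\tilde F(C)]$ its minimum is attained at the right endpoint $(1-\epsilon)\tilde F(C)$. Evaluating $\psi$ there and simplifying produces precisely
\[
	(1-\epsilon)\tilde F(C)\log(1-\epsilon)+\bigl(1-(1-\epsilon)\tilde F(C)\bigr)\log\Bigl(\tfrac{1-(1-\epsilon)\tilde F(C)}{1-\tilde F(C)}\Bigr),
\]
which is therefore a lower bound for $\re(G\mid\tilde F)$ valid for every feasible $G$.

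Finally I would verify that $G_{*}$ is admissible and attains this bound. Its density $dG_{*}/d\tilde F$ is constant on $C$ and constant on $C^{c}$, so both applications of Jensen's inequality are equalities for $G_{*}$; the two constants are nonnegative because $1-(1-\epsilon)\tilde F(C)=1-\tilde F(C)+\epsilon\tilde F(C)\ge 0$, and a short computation shows $G_{*}$ has total mass $1$, so $G_{*}\in\M_{1}$; and $G_{*}(C)=(1-\epsilon)\tilde F(C)$, the value that minimizes $\psi$ over the feasible range. Hence $\re(G_{*}\mid\tilde F)=\psi((1-\epsilon)\tilde F(C))$ equals the displayed expression. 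The only mild point to watch, just as in the positive case, is the bookkeeping when passing from the pointwise bound on $dG/d\tilde F$ to the scalar bound on $G(C)$ together with the monotonicity of $\psi$ on the relevant interval; the degenerate cases $\tilde F(C)\in\{0,1\}$ follow from continuity of $\psi$ (or are vacuous).
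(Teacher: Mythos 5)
Your proposal is correct and follows essentially the same route as the paper, which proves the $1+\epsilon$ case (Lemma \ref{lemma:rateExp}) via the Jensen/convexity decomposition over $C$ and $C^{c}$ and then simply states that the $1-\epsilon$ case is obtained in the same way. Your explicit observation that the constraint now forces $G(C)\le(1-\epsilon)\tilde F(C)$ and that the convex function $\psi$ is nonincreasing to the left of its unconstrained minimizer $\tilde F(C)$ is exactly the bookkeeping needed to adapt the argument, and your verification that $G_{*}$ is a genuine probability measure attaining the bound is sound.
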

\begin{lemma}
	\begin{align*}
	I(A^- _{\epsilon, \delta}) = \inf\{ J_-( C) : C \subset A, F(C ) \geq \delta\}. 
\end{align*}
\end{lemma}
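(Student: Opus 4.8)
The plan is to mirror, essentially verbatim, the proof of Lemma~\ref{lem:h1}, with $A^+_{\epsilon,\delta}$, the constraint $dG/d\tilde F \geq 1+\epsilon$, and $J_+$ replaced throughout by $A^-_{\epsilon,\delta}$, the constraint $dG/d\tilde F \leq 1-\epsilon$, and $J_-$. The first step is to record the reformulation of the rate on $A^-_{\epsilon,\delta}$: since any $G \in \Gamma \cap \Delta$ with $\Psi(G) = \nu$ satisfies $d\nu/dF = dG/d\tilde F$ on $A$ (because $f \equiv 1$ and $w = dF/d\tilde F$ there), and since $A^-_{\epsilon,\delta}$ imposes no constraint on $\nu$ outside $A$, the definition \eqref{eq:Rate} of $I$ gives
\begin{align*}
	I(A^-_{\epsilon,\delta}) = \inf\Big\{ \re(G \mid \tilde F) : \frac{dG}{d\tilde F}(x) \leq 1-\epsilon \text{ for } x \in \text{ some } C \subset A, \ F(C) \geq \delta \Big\}.
\end{align*}

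Next I would establish the two inequalities. For ``$\leq$'', fix a feasible $C^* \subset A$ with $F(C^*) \geq \delta$; every probability measure $G \in \Gamma \cap \Delta$ with $dG/d\tilde F \leq 1-\epsilon$ on $C^*$ is admissible in the infimum above (choose $C = C^*$), whence $I(A^-_{\epsilon,\delta}) \leq J_-(C^*)$, and taking the infimum over feasible $C^*$ yields $I(A^-_{\epsilon,\delta}) \leq \inf\{J_-(C) : C \subset A, F(C) \geq \delta\}$. For ``$\geq$'', given $\eta > 0$ I would pick $G^* \in \Gamma \cap \Delta$ and an associated $C^* \subset A$ with $F(C^*) \geq \delta$ and $dG^*/d\tilde F \leq 1-\epsilon$ on $C^*$ such that $\re(G^* \mid \tilde F) \leq I(A^-_{\epsilon,\delta}) + \eta$; then
\begin{align*}
	I(A^-_{\epsilon,\delta}) + \eta \geq \re(G^* \mid \tilde F) \geq J_-(C^*) \geq \inf\{ J_-(C) : C \subset A, \ F(C) \geq \delta \},
\end{align*}
and letting $\eta \downarrow 0$ finishes the proof.

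I expect no genuine obstacle here: the argument is a transcription of Lemma~\ref{lem:h1}, and the only point that deserves a second look is the passage between the measure-level description of $A^-_{\epsilon,\delta}$ (a constraint on $d\nu/dF$) and the density-level constraint on $dG/d\tilde F$, which rests on the identity $d\nu/dF = dG/d\tilde F$ on $A$ already exploited in the $A^+_{\epsilon,\delta}$ case, together with the fact that every $G \in \Delta$ is absolutely continuous with respect to $\tilde F$ so that these densities are well defined. No new analytic input is needed, and in particular Lemma~\ref{lemma:rateExpNeg} is used only later to evaluate $J_-$ explicitly, not in the present identity.
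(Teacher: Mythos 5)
Your proposal is correct and follows exactly the route the paper intends: the paper gives no separate argument for this lemma, stating only that it is ``obtained precisely as for $A^{+}_{\epsilon,\delta}$,'' and your transcription of the two-inequality proof of Lemma \ref{lem:h1} (together with the identity $d\nu/dF = dG/d\tilde F$ on $A$) is that argument.
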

Let $\gamma ^- _{\epsilon}$ be the function
\begin{align*}
	\gamma^- _{\epsilon}(s) = (1-\epsilon) s \log(1-\epsilon) + (1-(1-\epsilon)s)\log\Big(\frac{1-(1-\epsilon)s}{1-s}\Big).  
\end{align*} 
This is an increasing function in $s$ and thus the optimal $C$ is given precisely by the $C_{\tilde t_{\delta}}$ in Proposition \ref{prop:Set}. Thus, $I(A^- _{\epsilon, \delta}) = \gamma ^- _{\epsilon} (\tilde{F} (C _{\tilde t_{\delta}}))$, whenever there exists $\tilde t_{\delta}$ such that $F(C_{\tilde t_{\delta}}) = \delta$. 

Having obtained an explicit characterization of the rate function we now study some well-known examples in order to illustrate the performance analysis.

\begin{example}\label{ex:MC1}
Consider a standard Monte Carlo algorithm, i.e., $\tilde F = F$, and let $A$ be a set with $F(A) = p$. Put
$$ A ^+ _{\epsilon, \delta} = \Big\{ G \in \M _1 : \frac{dG}{dF}(x) \geq 1+\epsilon \text{ for } x \in \,\text{some } C \subset A, F( C) \geq \delta\Big\}.$$ 
For standard Monte Carlo, the rate function associated with Sanov's theorem is the relative entropy: $I ^{MC}(G) = \re(G\mid F)$. The rate can be computed, just as in the general importance sampling case, as
\begin{align*}
	I^{MC}(A^{+}_{\epsilon, \delta}) = \inf\{J^{MC}_{+}(C ) : C \subset A, F( C) \geq \delta\},
\end{align*}
	with 
\begin{align*}
	J^{MC}_{+}(C ) = \inf \Bigl \{\re(G \mid F): \frac{dG}{dF}(x) \geq 1+\epsilon, \ x \in C \Bigr\} = \gamma^{+}_{\epsilon}(F( C)).
\end{align*}
Suppose there exists a set $C$ such that $F(C) = \delta$. Then, since $\gamma^{+}_{\epsilon}$ is increasing, we conclude that
\begin{align*}
	I^{MC}(A^{+}_{\epsilon, \delta}) = \gamma^{+}_{\epsilon}(\delta), 
\end{align*}
and, by the reasoning leading to \eqref{eq:nCramer}, the number of samples needed in order to obtain a specific error probability is proportional to the reciprocal of the rate $\gamma ^+ _{\epsilon}(\delta)$, with the proportionality constant depending only on the desired accuracy, i.e., bound on the error probability.
 
Let $I^{IS}$ denote the rate function of Theorem \ref{thm:Laplace} and suppose that in Proposition \ref{prop:Set} such a $\tilde{t}_{\delta}$ exists. An importance sampling algorithm with sampling distribution $\tilde F$ has rate
\begin{align*}
	I ^{IS}(A^{+}_{\epsilon, \delta}) = \gamma^{+}_{\epsilon}(\tilde F(C_{\tilde t_{\delta}})).
\end{align*}
If the cost for generating one sample from $\tilde F$ is $c$ times the cost for generating one sample from $F$, then the reduction in computational cost is roughly
\begin{align*}
	c \frac{n^{IS}}{n^{MC}} \approx c \frac{I^{MC}(A^{+}_{\epsilon, \delta})}{I^{IS}(A^{+}_{\epsilon, \delta})} = c \frac{\gamma^{+}_{\epsilon}(\delta)}{\gamma^{+}_{\epsilon}(\tilde F(C_{\tilde t_{\delta}}))}. 
\end{align*}
\end{example}

\begin{example}[Light-tailed random walk] \label{ex:IS1}
For some $m \in \N$, let $X_1,\dots ,X_m$ be independent and identically distributed with finite moment generating function and let $F$ denote the distribution the vector $(X_1,\dots,X_m)$. Consider the normalized  light-tailed random walk $S_{m}/m $ where $S_m = (X_{1}+\dots+X_{m})$. Take $A = \{x: \sum x_i \geq am \}$ to be the set of interest, so that $F(A) = \Prob(S_{m} \geq ma)$, and take the sampling distribution $\tilde F_{\theta}$ as an exponential change of measure: 
\begin{align*}
	\frac{dF}{d\tilde F_{\theta}}(x_{1}, \dots, x_{m}) = e^{m(\kappa(\theta) - \theta \frac{1}{m}\sum_{i=1}^{m} x_{i})}.
\end{align*}
Here $\kappa$ is the logarithmic moment generating function of $X_{1}$. Consider the set $A^{+}_{\epsilon, \delta}$ in \eqref{eq:alt1}.  From Proposition \ref{prop:Set} the rate $I(A^{+}_{\epsilon, \delta})$ is given by 
$\gamma^{+}_{\epsilon}(\tilde F_{\theta}(C_{\tilde t_{\delta}}))$, where
\begin{align*}
	C_{\tilde t_{\delta}} = \Big\{x \in A: \frac{1}{m}\sum_{i=1}^{m} x_{i} \leq (\kappa(\theta)-(1/m)\log \tilde t_{\delta})/\theta\Big\}. 
\end{align*}
By the choice of $\tilde t_{\delta}$, 
\begin{align*}
	\tilde F_{\theta}(C_{\tilde t_{\theta}}) &= \E[ e^{\theta S_{m} - m\kappa(\theta)}I\{ a \leq S_{m}/m \leq (\kappa(\theta)-(1/m)\log \tilde t_{\delta})/\theta\}] \\
& \geq  e^{m(\theta a - \kappa (\theta) } \E[I\{ a \leq S_m /m  \leq (\kappa(\theta)-(1/m)\log \tilde t_{\delta})/\theta\}] \\
&= e^{m (\theta a - \kappa (\theta))} F(C_{\tilde t_{\delta}}) \\
	& = e^{m (\theta a - \kappa (\theta))} \delta.  
\end{align*}
If the cost for each replication of the importance sampling algorithm is $c$ times the cost for each replication of the standard Monte Carlo algorithm, then we conclude that the reduction in computational cost is given by
\begin{align*}
	c \frac{\gamma^{+}_{\epsilon}(\delta)}{\gamma^{+}_{\epsilon}(\tilde F(C_{\tilde t_{\delta}}))} \leq  c\frac{\gamma^{+}_{\epsilon}(\delta)}{\gamma^{+}_{\epsilon}(e^{m (\theta a - \kappa (\theta))} \delta))}.
\end{align*}
A good choice of $\theta$ is the $\theta _a$ that maximises $\theta a - \kappa(\theta)$. It holds that this $\theta _a$ is given by  $\kappa '(\theta_a) = a $. We emphasize that in addition to suggesting the well known exponential change of measure with parameter $\theta_{a}$, the large deviation analysis also provides this useful upper bound on the reduction in computational cost. 
\end{example}

\subsection{Applications in rare-event simulation}
The efficiency analysis of the importance sampling algorithms presented so far is not targeted specifically to capture the performance of rare-event simulation algorithms. In this section we illustrate how a rare-event analysis can be performed, based on Theorem \ref{thm:Laplace} and the suggested performance criteria. The elementary examples presented in this section demonstrate that, based on Theorem \ref{thm:Laplace}, one can obtain results that are consistent with variance analysis as well as estimates on the sample size required to reach a certain precision and the reduction in computational cost from using importance sampling.  

We begin by analyzing standard Monte Carlo and the importance sampling algorithm for a light-tailed random walk with an emphasis on rare events.

\begin{example}[Rare-event analysis for standard Monte Carlo] \label{ex:MC}
Consider computing a probability $F(A) = p$ by standard Monte Carlo, as in Example \ref{ex:MC1}. Take $\delta = \delta' p$ for some $\delta' \in (0,1)$. The performance of the algorithm can be captured by the rate $I^{MC}(A^{+}_{\epsilon, \delta'p}) = \gamma^{+}_{\epsilon}(\delta' p)$, where
\begin{align*}
	\gamma^{+}_{\epsilon}(\delta' p) &=   p \delta'(1 + \epsilon) \log (1+ \epsilon) + \left(1 - p \delta'(1+\epsilon) \right) \log \left( \frac{1 - p\delta'(1 + \epsilon)}{1-p\delta'} \right)\\
	&= p \delta'[(1 + \epsilon) \log (1+ \epsilon) -\epsilon] + o(p^{2}). 
\end{align*}
Thus, as $p \to 0$, the rate decays linearly with $p$ and as a consequence the sample size needed for a given precision increases proportionally to $1/p$. 
\end{example}

\begin{example}[Rare-event analysis for a light-tailed random walk] \label{ex:IS}
In the random walk example, Example \ref{ex:IS1}, the performance of the importance sampling algorithm, with parameter $\theta = \theta_{a}$, is captured by $\gamma^{+}_{\epsilon}(e^{m(\theta_{a}a-\kappa(\theta_{a}))}\delta)$. Let $p_{m} = \Prob(S_{m} \geq ma)$ and take $\delta = \delta' p_{m}$. It follows that
\begin{align*}
	\gamma^{+}_{\epsilon}(e^{m(\theta_{a}a-\kappa(\theta_{a}))}\delta'p_{m}) 
	&= e^{m(\theta_{a}a-\kappa(\theta_{a}))} p_{m} \delta'[(1 + \epsilon) \log (1+ \epsilon) -\epsilon] \\ &\quad + o(e^{2m(\theta_{a}a-\kappa(\theta_{a}))}p_{m}^{2}).
\end{align*} 
The reduction in computational cost for the importance sampler vs.\ the standard Monte Carlo algorithm is then bounded from above by
\begin{align*}
 c\frac{\gamma^{+}_{\epsilon}(\delta' p_{m})}{\gamma^{+}_{\epsilon}(e^{m (\theta_a a - \kappa (\theta_a))} \delta' p_{m}))} \sim c e^{-m(\theta_{a}a-\kappa(\theta_{a}))}, \quad \text{ as } m\to \infty. 
\end{align*}
The conclusion is that the reduction in computational cost is exponential in $m$. 
\end{example}

We end this section by demonstrating the performance of the the zero-variance change of measure \cite[p.\ 127]{asmussen2007}. This choice of sampling distribution is optimal for estimating a probability in the sense that the variance of the estimator is zero; it is often used as a reference point for designing efficient sampling algorithms.
\begin{example}[Zero-variance change of measure]
\label{ex:zeroVar}
Consider the probability $p = F(A)$, for some $A \subset \calX$ and distribution $F$ and  take the importance function to be $I\{x \in A\}$. The zero-variance sampling distribution is the distribution $\tilde {F}$ given by
$$ \frac{d\tilde F}{dF} (x) = \frac{I  \{ x \in A\} }{p}.$$

Let $\delta' \in (0,1)$ and $\delta = \delta'p$. The likelihood ratio is constant over the entire set $A$ and $\tilde{F}(C) = F(C) p ^{-1}$ for any $C \subset A$. That is, $\tilde{F}(C) = F(C\mid A)$, the conditional probability under $F$ of $C$ given $A$. It follows that 
\begin{align*}
	J_{+}( C) = \inf\{\re(G \mid \tilde F): \frac{dG}{d\tilde F}(x) \geq 1+\epsilon, x \in C\} = \gamma^{+}_{\epsilon}(\tilde F( C)) = \gamma^{+}_{\epsilon}\Big(\frac{F( C)}{p}\Big), 
\end{align*}
and the rate
\begin{align*}
	I(A^{+}_{\epsilon, \delta'p}) = \gamma^{+}_{\epsilon}(\delta').
\end{align*}
In particular, the rate $I(A^+ _{\epsilon, \delta p'})$ is independent of $p$. 

For $\delta' = 1$, the case of  having a relative error $\epsilon$ on the estimate of $p$, the rate corresponding to the zero-variance change of measure is $ \infty$. This follows from the fact that no probability measure $G$ which is absolutely continuous with respect to $\tilde{F}$ on $A$, and thus must satisfy $G(A) = 1$, can give rise to such an error. In this particular case the rate ($\infty$) can be obtained without using large deviation results. Since good performance corresponds to a large rate, the zero-variance change of measure is clearly optimal also from the large deviation perspective brought forward in this paper.
\end{example}
\subsection{Performance analysis for computing quantiles}
Let the underlying space $\calX$ be the real line and consider computing a quantile of a distribution $F$ on $\R$. For $\alpha \in (0,1)$ the $\alpha$-quantile of a finite measure $\nu$ on $\R$ is defined by
\begin{align*}
	\Phi_{\alpha}(\nu) = \inf\{x: \nu\bigl((x,\infty)\bigr) \leq \alpha\}. 
\end{align*} 
For $\epsilon > 0$, consider the set
\begin{align*}
	A^{+}_{\epsilon} = \{\nu \in \M: \Phi_{\alpha}(\nu) \geq (1+\epsilon) \Phi_{\alpha}(F)\}. 
\end{align*}
Let $\tilde F$ be the sampling distribution of an importance sampling algorithm for computing $\Phi_{\alpha}(F)$. Suppose that the importance function is $I\{x \in (a,\infty)\}$ where $a \leq \Phi_{\alpha}(F)$ and let $I$ be the rate function in Theorem \ref{thm:Laplace}. The performance of the importance sampling algorithm can be quantified by $I(A^{+}_{\epsilon})$. Of course one may also consider $A^{-}_{\epsilon}$ defined in the obvious way, but for this illustration we work exclusively with $A^{+}_{\epsilon}$. 

Let us compute the rate $I(A^{+}_{\epsilon})$. First note that, with $q_{\alpha, \epsilon} = (1+\epsilon)\Phi_{\alpha}(F)$, 
\begin{align*}
	A^{+}_{\epsilon} &= \{\nu \in \M: \Phi_{\alpha}(\nu) \geq q_{\alpha, \epsilon}\} 
	= \{\nu \in \M:  \nu(q_{\alpha, \epsilon}, \infty) \geq  \alpha\}
\end{align*}
and the rate is therefore given by
\begin{align*}
	I(A^{+}_{\epsilon}) &= \inf\{\re(G \mid \tilde F): \nu = \Psi(G), \ \nu(q_{\alpha, \epsilon}, \infty) \geq \alpha\} \\
	 &= \inf \left \{\re(G \mid \tilde F):  \int _{\R} I\{x > q_{\alpha, \epsilon}\} w(x) dG(x) \geq \alpha \right \}.  
\end{align*}
The infimum is attained at $G^{*}_{\alpha}$ given by
\begin{align*}
	\frac{dG^{*}_{\alpha}}{d\tilde F}(x) = \frac{e^{\lambda k(x)}}{M(\lambda)}, 
\end{align*}
where $k(x) = I\{x > q_{\alpha, \epsilon}\} w(x)$, $M(\lambda) = \int e^{\lambda k(x)} \tilde F(dx)$  and $\lambda$ is given as the solution to 
\begin{align}\label{eq:lambda}
	\alpha = \frac{\partial_{\lambda} M(\lambda)}{M(\lambda)}. 
\end{align}
To see that the infimum is attained at $G^{*}_{\alpha}$, note that by the variational formula for relative entropy \cite[Proposition 4.5.1]{Dupuis97}, for all $\lambda \geq 0$ and $G \in \M_{1}$ such that $\int k(x) dG(x) \geq \alpha$,
\begin{align*}
	\re(G \mid \tilde F) \geq \lambda \alpha - \log M(\lambda),
\end{align*}
and the inequality is satisfied with equality for $G^{*}_{\alpha}$. We have just proved the following. 
\begin{proposition}\label{prop:quantile}
$I(A^{+}_{\epsilon}) = \lambda \alpha - \log M(\lambda)$ where $\lambda$ is determined by \eqref{eq:lambda}. 
\end{proposition}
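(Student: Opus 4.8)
The plan is to strip the quantile functional off the set $A^{+}_{\epsilon}$, reducing the constraint $\Psi(G)\in A^{+}_{\epsilon}$ to a single moment inequality that is linear in $G$, and then to solve the resulting relative‑entropy minimization by an exponential tilt, which is the classical Gibbs / Donsker--Varadhan calculation. First I would record the identity $A^{+}_{\epsilon}=\{\nu\in\M:\nu((q_{\alpha,\epsilon},\infty))\geq\alpha\}$ with $q_{\alpha,\epsilon}=(1+\epsilon)\Phi_{\alpha}(F)$; this follows from $\Phi_{\alpha}(\nu)=\inf\{x:\nu((x,\infty))\leq\alpha\}$ together with the right‑continuity of $x\mapsto\nu((x,\infty))$, which makes the infimum attained (the only subtlety, a possible atom of $\nu$ at $q_{\alpha,\epsilon}$, is handled by approximating from the left). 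Since the importance function is $f=I\{\cdot>a\}$ with $a\leq\Phi_{\alpha}(F)<q_{\alpha,\epsilon}$, we have $f\equiv1$ and hence $wf=w$ on $(q_{\alpha,\epsilon},\infty)$, so for $G\in\Gamma\cap\Delta$,
\[
 \Psi(G)\big((q_{\alpha,\epsilon},\infty)\big)=\int I\{x>q_{\alpha,\epsilon}\}\,w(x)\,dG(x)=\int k\,dG,\qquad k:=I\{\cdot>q_{\alpha,\epsilon}\}\,w.
\]
Since $I(\nu)$ is by definition $+\infty$ unless $\nu=\Psi(G)$ for some $G\in\Gamma\cap\Delta$, minimizing $I$ over $A^{+}_{\epsilon}$ is the same as minimizing $\re(\cdot\mid\tilde F)$ over $\{G\in\Gamma\cap\Delta:\int k\,dG\geq\alpha\}$, and the definition \eqref{eq:Rate} collapses to $I(A^{+}_{\epsilon})=\inf\{\re(G\mid\tilde F):G\in\Gamma\cap\Delta,\ \int k\,dG\geq\alpha\}$.

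For the lower bound, put $M(\lambda)=\int e^{\lambda k}\,d\tilde F$; since $0\leq k\leq wf$, hypothesis (ii) of Theorem \ref{thm:Laplace} gives $M(\lambda)<\infty$ for every $\lambda\geq0$. For any admissible $G$ (so $G\ll\tilde F$ and $\re(G\mid\tilde F)<\infty$) and any $\lambda\geq0$, the variational formula for relative entropy \cite[Proposition 4.5.1]{Dupuis97} --- equivalently, Jensen's inequality applied to $\int\lambda k\,dG=\int\log\!\big(e^{\lambda k}\tfrac{d\tilde F}{dG}\big)dG+\re(G\mid\tilde F)$ --- gives $\re(G\mid\tilde F)\geq\lambda\int k\,dG-\log M(\lambda)\geq\lambda\alpha-\log M(\lambda)$, the last step using $\lambda\geq0$ and the constraint. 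Taking the infimum over $G$ yields $I(A^{+}_{\epsilon})\geq\lambda\alpha-\log M(\lambda)$ for every $\lambda\geq0$.

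For the matching upper bound, let $\lambda$ be the solution of \eqref{eq:lambda}, i.e.\ $\partial_{\lambda}M(\lambda)/M(\lambda)=\alpha$; this $\lambda$ is nonnegative because $\lambda\mapsto\log M(\lambda)$ is convex and $\tfrac{d}{d\lambda}\log M(0)=\int k\,d\tilde F=F((q_{\alpha,\epsilon},\infty))\leq\alpha$, using $q_{\alpha,\epsilon}\geq\Phi_{\alpha}(F)$ and $F((\Phi_{\alpha}(F),\infty))\leq\alpha$. Define $G^{*}_{\alpha}$ by $dG^{*}_{\alpha}/d\tilde F=e^{\lambda k}/M(\lambda)$. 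Then $\int k\,dG^{*}_{\alpha}=\partial_{\lambda}M(\lambda)/M(\lambda)=\alpha$ (differentiation under the integral sign is licensed by (ii)), so $G^{*}_{\alpha}$ meets the constraint; moreover $\re(G^{*}_{\alpha}\mid\tilde F)=\int(\lambda k-\log M(\lambda))\,dG^{*}_{\alpha}=\lambda\alpha-\log M(\lambda)<\infty$, so $G^{*}_{\alpha}\in\Delta$, and since $wf\,e^{\lambda k}\leq wf\,e^{\lambda wf}\leq e^{(\lambda+1)wf}$ is $\tilde F$‑integrable by (ii), also $G^{*}_{\alpha}\in\Gamma$. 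Hence $I(A^{+}_{\epsilon})\leq\re(G^{*}_{\alpha}\mid\tilde F)=\lambda\alpha-\log M(\lambda)$, and combined with the lower bound this proves the proposition.

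I expect the only genuinely delicate point to be the first step: one must make sure the importance function does not amputate the half‑line $(q_{\alpha,\epsilon},\infty)$ that controls the quantile --- which is exactly what the hypothesis $a\leq\Phi_{\alpha}(F)$ buys --- and one must verify that the tilted measure $G^{*}_{\alpha}$ actually lies in $\Gamma\cap\Delta$, so that it is an admissible competitor in the variational problem defining $I$ and not merely a formal minimizer. The remaining algebra is the routine exponential‑family computation.
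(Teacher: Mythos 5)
Your proposal is correct and follows essentially the same route as the paper: rewrite $A^{+}_{\epsilon}$ as the moment constraint $\int k\,dG\geq\alpha$ with $k=I\{\cdot>q_{\alpha,\epsilon}\}w$, get the lower bound from the Donsker--Varadhan variational formula, and attain it with the exponentially tilted measure $G^{*}_{\alpha}$. You additionally verify several points the paper leaves implicit (finiteness of $M(\lambda)$, nonnegativity of $\lambda$, and membership of $G^{*}_{\alpha}$ in $\Gamma\cap\Delta$), which is a welcome tightening rather than a deviation.
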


\begin{example} 
For a standard Monte Carlo algorithm the rate $I(A^{+}_{\epsilon})$ can be explicitly computed. Indeed, in this case $k(x) = I\{x \geq q_{\alpha, \epsilon}\}$ and
\begin{align*}
	M(\lambda) = e^{\lambda} p_{\alpha, \epsilon} + 1- p_{\alpha, \epsilon},
\end{align*}
with $p_{\alpha, \epsilon} = F\bigl((q_{\alpha, \epsilon}, \infty)\bigr) \leq \alpha$. The equation for $\lambda$ becomes
\begin{align*}
	\alpha = \frac{\partial_{\lambda} M(\lambda)}{M(\lambda)} = \frac{e^{\lambda}p_{\alpha, \epsilon}}{e^{\lambda}p_{\alpha, \epsilon} + 1-p_{\alpha, \epsilon}}, 
\end{align*}
which leads to
\begin{align*}
	\lambda = \log\Big(\frac{\alpha(1-p_{\alpha, \epsilon})}{(1-\alpha)p_{\alpha, \epsilon}}\Big)
\end{align*}
and finally
\begin{align*}
	I(A^{+}_{\epsilon}) &= \lambda \alpha - \log M(\lambda) = \alpha \log\Big(\frac{\alpha}{p_{\alpha, \epsilon}}\Big) + (1-\alpha) \log\Big(\frac{1-\alpha}{1-p_{\alpha, \epsilon}}\Big).
\end{align*}
That is, if $\re(\alpha \mid p_{\alpha, \epsilon})$ refers to the relative entropy between two Bernoulli distributions with parameters $\alpha$ and $p_{\alpha, \epsilon}$, respectively, then
\begin{align*}
	I(A^{+}_{\epsilon}) = \re(\alpha \mid p_{\alpha, \epsilon}).
\end{align*}
\end{example}
For a general importance sampling algorithm the expression for $I(A^+ _{\epsilon})$ in Proposition \ref{prop:quantile} has to be worked out on a case-by-case basis.

\section[Proof of the Laplace principle]{Proof of  Theorem \ref{thm:Laplace}} \label{sec:proofs}

In this section the proof of the Laplace principle for the weighted empirical measures of importance sampling is presented. The proof relies on the weak convergence approach developed by Dupuis and Ellis \cite{Dupuis97}. The three main steps of the proof are:\\
\begin{enumerate}
\item Derive a representation formula for the pre-asymptotic expectation
$$ W^n = - \frac{1}{n}\log \widetilde\E [e^{-nh(\edfis ^{wf} _n)}].$$
This is achieved by formulating a stochastic control problem that has a minimal cost function equal to $W^n$. In the setting considered here the representation formula reads
\begin{equation}
\label{eq:Rep}
	W ^n = \inf _{\{ G_{n,j} \}} \Eb \biggl[\frac{1}{n} \sum _{j=0} ^{n-1} \re (G_{n,j}(\cdot \mid \bar{\empd} _{n,j}) \mid \tilde{F}) + h(\bar{\empd} _n) \biggr],
\end{equation} 
where $\bar{\empd} _{n,j}$ is the controlled process (empirical measure), $\bar{\empd} _{n,j+1} = \bar{\empd} _{n,j} +  \frac{1}{n} \delta _{\bar{X} _{n,j} } $ and $\bar{\empd} _n = \frac{1}{n} \sum _{j=0} ^{n-1} \delta _{\bar{X} _{n,j}} $, obtained by sampling $\bar{X} _{n,j}$ from the distribution (control) $G_{n,j} (\cdot \mid \bar{\empd} _{n,j})$.
\item The representation formula is used to prove the Laplace principle lower bound,
$$ \liminf _n \frac{1}{n} \log \widetilde \E[e^{-nh(\edfis _n ^{wf})}] \geq - \inf _{G \in \Delta \cap \Gamma} \{ h(\Psi (G)) + \re(G \mid \tilde{F}) \}.$$
\item The most involved step is to use the representation formula to prove the Laplace principle upper bound,
$$ \limsup _n \frac{1}{n} \log \widetilde \E[e^{-nh(\edfis _n ^{wf})}] \leq - \inf _{G \in \Delta \cap \Gamma} \{ h(\Psi (G)) + \re(G \mid \tilde{F}) \}.$$
\end{enumerate}

\subsection{Representation formula}
In this section we show the representation formula \eqref{eq:Rep} for the pre-limit expectation
$$ W^n = - \frac{1}{n}\log \widetilde \E [e^{-nh(\edfis ^{wf} _n)}]. $$
This is achieved by considering a related stochastic control problem. For a more thorough discussion see \cite[Section 2.3]{Dupuis97}.

The difference between the current setting and that of Sanov's theorem is that only a subset $\Gamma = \{ G \in \M_{1} : G(wf) < \infty \}$ of the space of probability measures is under consideration. The proof of the representation formula $W^n$ is therefore very similar to the standard case. Recall that $\edfis _n ^{wf}(g) = \edfis _n (wfg)$ for each measurable $g$. In particular,
$$ \widetilde \E [\edfis ^{wf} _n (I _{\calX})] = \widetilde \E [\edfis _n (wf)] = F(f) < \infty,$$
and it follows that $\edfis _n \in \Gamma$ with probability 1. Let $\Gamma_{n} = \Gamma$ and define, for $j=0, \dots, n-1$,  recursively the sets $\Gamma_{j} \subset \M_{j/n}$ by 
\begin{align*}
	\Gamma_{j} = \{G \in \M_{j/n}: \tilde F(\{y: G + \frac{1}{n}\delta_{y} \in \Gamma_{j+1}\}) = 1\}. 
\end{align*}
That is, if $G \in \Gamma_{j}$, then sampling $Y$ from $\tilde F$ implies that $G+n^{-1}\delta_{Y} \in \Gamma_{j+1}$ with probability $1$. 


Let $\M_{0}$ be the set containing only the null measure and introduce the measurable mapping $W^n: \bigcup_{k=0}^{n} ( \{k\} \times \M _{k/n}) \to \bar{\R}= [-\infty, \infty]$ by
\begin{align}
\label{eq:Wnj}
	W ^n (j,G) = \begin{cases} - \frac{1}{n}\log \widetilde \E \left[e^{-nh(\Psi(\edfis _n))}\mid \edfis _{n,j} = G \right], &\ \textrm{for } G\in \Gamma_{j}, \\
	\infty, &\ \textrm{for } G \in \Gamma_{j}^c, \end{cases}
\end{align}
for $j=0,...,n-1$ and
\begin{equation}
\label{eq:Wnn}
	W ^n (n,G) = \bar{h}(G), 
\end{equation}
where
\begin{align}
\bar{h}(G) = 
\begin{cases} 
	h(\Psi(G)), &\ \textrm{for } G \in \Gamma, \\
	\infty, &\ \textrm{for } G \in \Gamma ^c.
\end{cases}
\end{align}
In particular, we set $W^n = W^n (0,0)$. 
Note that in (\ref{eq:Wnj}) $G \in \M _{j/n} (\calX)$ is a subprobability measure, and the $\edfis _{n,j} = (1/n) \sum _{i=0} ^{j-1} \delta _{\tilde{X}_i}$ are the empirical subprobability measures obtained by sampling the $\tilde{X}_i$'s from $\tilde{F}$. By the same argument as in \cite{Dupuis97}, the Markov chain $\edfis _{n,j}$ can be used to obtain the recursion formula
\begin{align*}
	W^n (j,G) = - \frac{1}{n}\log \int  e^{-nW^n(j+1, G + \frac{1}{n}\delta _x)} d \tilde{F}(x).
\end{align*}
Since $h$ is bounded and continuous the mapping $W ^n$ is measurable, bounded from below, and bounded from above on $\Gamma$. Together with the recursion formula above, Proposition 4.5.1 in \cite{Dupuis97} gives that $W^n(j,G)$ can be written as
\begin{equation}
\label{eq:Wn_1}
 W^n (j,G)= \inf _{\tilde{G} \in \Delta} \left \{ \frac{1}{n} \re (\tilde{G} \mid \tilde{F}) + \int W^n(j+1, G + \frac{1}{n}\delta _x) d \tilde{G}(x) \right\},
\end{equation}
for $j=0,...,n-1$, where
$$ \Delta = \{ G \in \M _1 : \re (G \mid \tilde{F}) < \infty \}.$$
Moreover, the infimum is attained at $\tilde{G}_{n,j} \in \Delta$ defined by the Radon-Nikodym derivative
\begin{equation}
\label{eq:min_control}
	\frac{d\tilde{G}_{n,j}}{d\tilde{F}}(x) = \frac{e^{-W^n (j+1, G + \frac{1}{n}\delta _x)}}{\int e^{-W^n (j+1, G + \frac{1}{n}\delta _y)} d\tilde{F}(y)}.
\end{equation}
To derive the representation formula for $W^n$, consider the following related stochastic control problem. For $n \in \N$ and $j \in \{ 0,1,...,n\}$, let $G_{n,j}$ be a stochastic kernel on $\calX$ given $\M _{j/n}$. A controlled process $\{ \bar{\empd} _{n,j} \}$ is defined by $\bar{\empd} _{n,0} = \edfis _{n,0} = 0$ and
\begin{equation*}
	\bar{\empd} _{n,j} = \frac{1}{n} \sum _{k=0} ^{j-1} \delta _{\bar{X}_{n,k}}, \ \ \ \bar{\empd} _n = \bar{\empd} _{n,n}, 
\end{equation*}
where the conditional distribution of $\bar{X} _{n,k}$ given $\bar{\empd} _{n,0}, \bar{\empd} _{n,1},...,\bar{\empd} _{n,k}$ is
\begin{equation*}
	\bar{\Prob} (\bar{X} _{n,k} \in dx \mid \bar{\empd} _{n,0}, \bar{\empd} _{n,1},...,\bar{\empd} _{n,k}) = G_{n,k} (dx \mid \bar{\empd} _{n,k}).
\end{equation*}
All random variables and the corresponding (controlled) empirical subprobability measures are for all $n$ defined on a common probability space $(\bar{\Omega},\bar{\mathcal{F}}, \bar{\Prob})$ which will be used throughout the paper. For these dynamics, $j \in \{ 0,1,...,n\}$ and $G \in \M _{j/n}$, define the minimal cost functions 
\begin{equation}
\label{eq:rep_formula}
	\bar{W}^n(j,G) = \inf _{ \{ G_{n,j} \}} \Eb \biggl[ \frac{1}{n} \sum _{k=j} ^{n-1} \re(G_{n,k} (\cdot \mid \bar{\empd} _{n,k})\mid \tilde{F}) + \bar{h}(\bar{\empd} _n) \mid \bar{\empd} _{n,j} = G \biggr].
\end{equation}
For $j=0$ and $G = 0$ we set
\begin{align*}
	\bar{W}^n = \bar{W} ^n (0,0) = \inf _{ \{ G_{n,j} \}} \Eb \biggl[ \frac{1}{n} \sum _{k=0} ^{n-1} \re(G_{n,k} (\cdot \mid \bar{\empd} _{n,k})\mid \tilde{F}) + \bar{h}(\bar{\empd} _n) \biggr].
\end{align*}
\begin{proposition}
\label{prop:rep_formula}
	Let $\bar{W} ^n$ be given by (\ref{eq:rep_formula}) and $W^n$ be the solution to (\ref{eq:Wnj}) and (\ref{eq:Wnn}). Then $W^n = \bar{W} ^n$.
\end{proposition}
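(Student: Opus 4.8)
The plan is to prove the stronger identity $W^n(j,G)=\bar{W}^n(j,G)$ for every $j\in\{0,1,\dots,n\}$ and every $G\in\M_{j/n}$ by backward induction on $j$; the proposition is then the special case $j=0$, $G=0$. The base case $j=n$ is immediate, since both $W^n(n,G)$ and $\bar{W}^n(n,G)$ equal $\bar{h}(G)$ by definition. For the inductive step I assume the identity at level $j+1$ and deduce it at level $j$.

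The core of the induction is a dynamic programming principle for the control problem \eqref{eq:rep_formula}. Conditioning on $\bar{\empd}_{n,j}=G$ and on the first sampled point $\bar{X}_{n,j}$, one recognizes that the remaining running cost together with the terminal cost $\bar{h}(\bar{\empd}_n)$ is precisely the cost functional of the control problem initialized at $\bar{\empd}_{n,j+1}=G+\frac1n\delta_{\bar{X}_{n,j}}$; taking the infimum over admissible kernels and using the tower property gives
\[
 \bar{W}^n(j,G)=\inf_{\tilde G}\Bigl\{\tfrac1n\re(\tilde G\mid\tilde F)+\int \bar{W}^n\bigl(j+1,\,G+\tfrac1n\delta_x\bigr)\,d\tilde G(x)\Bigr\},
\]
where $\tilde G$ ranges over probability measures on $\calX$ (the conditional law $G_{n,j}(\cdot\mid G)$), and where the infimum may be restricted to $\tilde G\in\Delta$ because any $\tilde G$ with $\re(\tilde G\mid\tilde F)=\infty$ is plainly suboptimal (the value is finite, $h$ being bounded). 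This is the standard argument of \cite[Section 2.3]{Dupuis97}; the only delicate point is the measurable selection ensuring that the pointwise-in-$G$ optimal choices glue into an admissible stochastic kernel, which works here because the per-step minimization is exactly the one appearing in \eqref{eq:Wn_1}, whose minimizer \eqref{eq:min_control} is an explicit, measurably parametrized Gibbs change of measure.

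Substituting the inductive hypothesis $\bar{W}^n(j+1,\cdot)=W^n(j+1,\cdot)$ into the displayed recursion and comparing with \eqref{eq:Wn_1} shows that $\bar{W}^n(j,G)=W^n(j,G)$ for every $G\in\Gamma_j$. It remains to match the two functions on $\Gamma_j^c$, where $W^n(j,G)=\infty$ by definition. A short induction identifies $\Gamma_k=\{G\in\M_{k/n}:G(wf)<\infty\}$, using that $\widetilde\E[\edfis_n(wf)]=F(f)<\infty$ forces $wf<\infty$ $\tilde F$-almost surely; moreover the property $G(wf)=\infty$ is preserved under the addition of point masses. Hence if $G\notin\Gamma_j$, then $\bar{\empd}_n\notin\Gamma$ almost surely for every control, so $\bar{h}(\bar{\empd}_n)=\infty$ and $\bar{W}^n(j,G)=\infty=W^n(j,G)$. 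This closes the induction.

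I expect the main obstacle to be the rigorous verification of the dynamic programming principle for $\bar{W}^n$ — specifically the measurable-selection argument required to assemble the pointwise-optimal per-step kernels into a single admissible control, together with the attendant interchange of infimum and conditional expectation — which is carried out exactly as in \cite[Section 2.3]{Dupuis97}. The only additional bookkeeping relative to the classical Sanov setting is keeping track of the nested sets $\Gamma_k$ and the restriction to $\Gamma$, but, as noted above, these are invisible to any control of finite cost and therefore impose no genuine constraint.
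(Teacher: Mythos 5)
Your proposal is correct and in substance coincides with the paper's proof: the paper also argues by backward induction on the recursion \eqref{eq:Wn_1}, establishing $\bar{W}^n\geq W^n$ by bounding the cost-to-go of an arbitrary admissible control and $\bar{W}^n\leq W^n$ by evaluating the explicit one-step minimizer \eqref{eq:min_control} — which is precisely the content of the dynamic programming principle you invoke, so the measurable-selection issue you flag never actually arises. Your concrete identification $\Gamma_k=\{G\in\M_{k/n}:G(wf)<\infty\}$ (using $\tilde F(wf)=F(f)<\infty$) is a correct and slightly more explicit way of handling the boundary sets that the paper treats via the control-dependent sets $\bar\Gamma_j$.
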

\begin{proof} We begin by proving that $\bar{W} ^n (j,G) \geq W ^n (j,G)$ using backwards induction on $j$. Fix a control sequence $\{ G_{n,j} \}$ and let $\{ \bar{\empd}_{n,j} \}$ denote the associated controlled process. Let $\bar \Gamma _n = \Gamma$ and define sets $\bar \Gamma _j$ associated with the control sequence $\{ G_{n,j} \}$ by
$$ \bar \Gamma _j = \{ G \in \M _{j/n}: G_{n,j}(\{y: G + \frac{1}{n}\delta _y \in \bar \Gamma _{j+1} \} | G)=1 \}, \ j=0,\dots, n-1.$$
The definition is such that if at time $j$ the controlled process $\bar{\empd}_{n,j}$ lies in $\bar \Gamma _j$, then by sampling from $G_{n,j}, G_{n,j+1},...,G_{n, n-1}$ the controlled process $\bar{\empd}_n$ will belong to $\Gamma$ with probability 1.

Consider first the case $j=n$. Clearly, 
$$ \bar{W}^n (n,G) = \bar{h}(G) = W ^n (n,G),$$
and the claim is trivial for this $j$.

Take $j=n-1$. Suppose that $G \in \bar \Gamma _{n-1}$, so that $G_{n,n-1}(\{ y: G + \frac{1}{n}\delta _y \in \bar \Gamma _n)\}\mid G) = 1$. Using \eqref{eq:Wnj}, \eqref{eq:Wnn} and \eqref{eq:Wn_1}, we have
\begin{align*}
	& \Eb \biggl[\frac{1}{n}\re(G_{n,n-1}(\cdot \mid \bar{\empd}_{n,n-1})\mid \tilde{F}) + \bar{h}(\bar{\empd}_n) \mid \bar{\empd}_{n,n-1} = G\biggr] \\
	&\quad = \Eb \biggl[\frac{1}{n}\re(G_{n,n-1}(\cdot \mid \bar{\empd}_{n,n-1})\mid \tilde{F}) + \bar{h}(\bar{\empd}_n) + \int W^n (n, \bar{\empd} _n) dG_{n,n-1} \\
	&\qquad \qquad - \int W^n (n, \bar{\empd} _n) dG_{n,n-1} \mid \bar{\empd}_{n,n-1} = G \biggr] \\
	& \quad \geq \Eb \biggl[W^n (n-1,\bar{\empd} _{n,n-1}) + \bar{h}(\bar{\empd} _n) - \int W^n (n, \bar{\empd} _n) dG_{n,n-1} \mid \bar{\empd}_{n,n-1} = G \biggr] \\
	&\quad = W ^n (n-1, G).
\end{align*}
If instead $G \in \bar \Gamma _{n-1} ^c$, then $G_{n,n-1}(\{ y: G + \frac{1}{n}\delta _y \in \Gamma _n)\}\mid G) < 1$ and since $\bar{h}$ is infinite on $\bar \Gamma _n ^c$, this implies
$$ \Eb \biggl[\frac{1}{n}\re(G_{n,n-1}(\cdot \mid \bar{\empd} _{n,n-1}) \mid \tilde{F}) + \bar{h}(\bar{\empd}_n) \mid \bar{\empd}_{n,n-1} = G \biggr] = \infty .$$
This shows that
$$ \Eb \biggl[\frac{1}{n}\re(G_{n,n-1}(\cdot \mid \bar{\empd} _{n,n-1}) \mid \tilde{F}) + \bar{h}(\bar{\empd}_n) \mid \bar{\empd} _{n,n-1} = G\biggr] \geq W^n (n-1, G),$$
for any choice of $G$. 

Proceeding similarly for $j=n-2, n-3,...,0$ shows that
$$ \Eb \biggl[\frac{1}{n} \sum _{k=j} ^{n-1} \re(G_{n,k}(\cdot \mid \bar{\empd} _{n,k})\mid \tilde{F}) + \bar{h}(\bar{\empd}_n) \mid \bar{\empd}_{n,j} = G\biggr] \geq W^n (j, G),$$
for all $G$ and $j$. Taking infimum over all admissible control sequences $\{ G_{n,j} \} \subset \Delta$ proves the inequality.

Next, the reverse inequality $\bar{W}^n (j,G) \leq W^n (j,G)$ is proved. Consider the control sequence $\{ \tilde{G} _{n,j}\} $ defined by (\ref{eq:min_control}). For this particular sequence it holds that $\bar \Gamma _j = \Gamma_{j}$ for all $j$.

The case $j=n$ was handled above and thus we start by considering $j=n-1$. If $G \in \Gamma_{n-1}$, then by the definition of $\tilde{G}_{n,n-1}$ and \eqref{eq:Wn_1},
\begin{align*}
	& \Eb \biggl[\frac{1}{n}\re (\tilde{G} _{n,n-1}(\cdot \mid \bar{\empd} _{n,n-1}) \mid \tilde{F}) + \bar{h}(\bar{\empd} _n) \mid \bar{\empd} _{n,n-1} = G \biggr]\\
	&\quad = \Eb \biggl[\frac{1}{n}\re (\tilde{G} _{n,n-1} (\cdot \mid \bar{\empd} _{n,n-1}) \mid \tilde{F}) + W^n (n, \bar{\empd} _n) \mid \bar{\empd} _{n,n-1} = G \biggr] \\
	&\quad = \Eb \biggl[\frac{1}{n}\re (\tilde{G} _{n,n-1} (\cdot \mid \bar{\empd} _{n,n-1}) \mid \tilde{F}) \mid \bar{\empd} _{n, n-1} = G \biggr] \\ 
	&\qquad + \Eb \bigl[ W^n (n, \bar{\empd} _{n}) \mid \bar{\empd} _{n, n-1} = G \bigr] \\
	& \quad =\frac{1}{n}\re (\tilde{G} _{n,n-1} (\cdot \mid G) \mid \tilde{F}) + \int W^n (n, G + \frac{1}{n}\delta _y) d \tilde{G}_{n,n-1}(y \mid G) \\
	&\quad= W^n(n-1, G).
\end{align*}
If instead $G \in \Gamma_{n-1} ^c$, then $\tilde{G}_{n,n-1}(\{ y: G + \frac{1}{n}\delta _y \in \Gamma  \}\mid G) < 1$ and since $\bar{h}$ is infinite on $\Gamma ^c$ we have
$$ \Eb \biggl[\frac{1}{n}\re (\tilde{G} _{n,n-1} (\cdot \mid \bar{\empd} _{n,n-1}) \mid \tilde{F}) + \bar{h}(\bar{\empd} _n) \mid \bar{\empd} _{n,n-1} = G \biggr] = \infty = W^n (n-1, G).$$
This shows that 
$$ \Eb \biggl[\frac{1}{n}\re (\tilde{G} _{n,n-1} (\cdot \mid \bar{\empd} _{n,n-1}) \mid \tilde{F}) + \bar{h}(\bar{\empd} _n) \mid \bar{\empd} _{n,n-1} = G \biggr] = W^n(n-1, G),$$
for all $G$. Proceeding similarly for $j=n-2, ... ,0$ shows that 
$$ \Eb \biggl[\frac{1}{n} \sum _{k=j} ^{n-1} \re (\tilde{G} _{n,k}(\cdot \mid \bar{\empd} _{n,k}) \mid \tilde{F}) + \bar{h}(\bar{\empd} _n) \mid \bar{\empd} _{n,j} = G \biggr] = W^n(j, G),$$
for all $j$ and $G$. Taking infimum over all admissible control sequences $\left \{G_{n,j} \right \}$ in $\Delta$ yields the desired inequality. This completes the proof. 
\end{proof}
\subsection{Laplace principle lower bound}
In this section the Laplace principle lower bound,
\begin{equation}
\label{eq:laplaceLower}
 \liminf _n \frac{1}{n}\log \E[e^{-nh(\edfis ^{wf} _n)}] \geq - \inf _{G \in \Delta \cap \Gamma} \{h(\Psi(G)) + \re(G \mid \tilde{F})\},
\end{equation}
is proved.
With $W^n$ as in \eqref{eq:Wnj}, the lower bound is is equivalent to the upper bound
\begin{equation}
\label{eq:Wn_upper}
\limsup _n W^n \leq \inf _{G \in \Delta \cap \Gamma} \{h(\Psi(G)) + \re(G \mid \tilde{F})\},
\end{equation}
which is proved by using the representation formula \ref{eq:rep_formula}) for $W^n$, derived in Proposition \ref{prop:rep_formula} is used.

The following strong law of large numbers will play a role in proving \eqref{eq:Wn_upper}. Note that in Proposition \ref{prop:SLLN_IS} the function $f$ is any measurable, non-negative $F$-integrable function, not necessarily any specific importance function. However, any importance function is of course an example of the type of function used in the proposition. Henceforth, $f$ will again be used denote an (unspecified) importance function.
\begin{proposition}[Strong law of large numbers under importance sampling]
\label{prop:SLLN_IS}
	Let $f$ be non-negative, measurable and $F$-integrable. Let $\{ X_j \}$ be independent and identically distributed with common distribution $F$.  Let $\empd ^f _n$ be the weighted measure in $\M$ determined by
$$ \empd ^f _n (g) = \frac{1}{n} \sum _{j=0} ^{n-1} f(X_j)g(X_j),$$
for each bounded measurable function $g$. Then, with probability 1, 
$$ \empd ^f _n \tauconv F^f,$$
in $\M$. 
\end{proposition}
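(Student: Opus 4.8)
The plan is to reduce the statement to the classical strong law of large numbers for i.i.d.\ real-valued random variables, once one has unwound what $\tauconv$-convergence means in the measure-theoretic setting adopted here. Since $\empd^f_n$ and $F^f$ are treated as elements of $(\M,\calF_{\M})$ and $\calF_{\M}$ is generated by the evaluation maps $\nu\mapsto\nu(g)$, I would read the assertion ``$\empd^f_n\tauconv F^f$ with probability one'' coordinatewise: for each fixed bounded measurable $g:\calX\to\R$ one has $\empd^f_n(g)\to F^f(g)$ almost surely. (No single exceptional null set serves all $g$ at once; I return to this below.)

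First I would dispose of the total mass. Taking $g\equiv 1$ gives $F^f(\calX)=\int f\,dF=F(f)<\infty$ by the $F$-integrability of $f$, so $F^f$ is indeed a finite measure; and $\empd^f_n(\calX)=\frac1n\sum_{j=0}^{n-1}f(X_j)$ is a Ces\`aro mean of the i.i.d.\ non-negative integrable variables $f(X_j)$, hence converges almost surely to $F(f)$ by Kolmogorov's strong law, so the total masses $\empd^f_n(\calX)$ are a.s.\ bounded. Next I would fix a bounded measurable $g$ and set $Y_j=f(X_j)g(X_j)$. The $Y_j$ are i.i.d.\ (one fixed measurable function applied to the i.i.d.\ sequence $\{X_j\}$) and integrable, since $|Y_j|\le\|g\|_\infty f(X_j)$ and $\E[f(X_0)]=F(f)<\infty$; hence $\E|Y_0|\le\|g\|_\infty F(f)$ and $\E[Y_0]=\int gf\,dF=F^f(g)$. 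Kolmogorov's strong law then gives $\empd^f_n(g)=\frac1n\sum_{j=0}^{n-1}Y_j\to F^f(g)$ almost surely, which is exactly the assertion.

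The SLLN input above is thus the whole substance of the proof; the only genuinely delicate point --- and the reason the statement, and its use in the weak-convergence proof of Theorem~\ref{thm:Laplace}, is phrased relative to $\calF_{\M}$ rather than against all bounded measurable test functions simultaneously --- is the standard obstruction to setwise convergence of empirical measures. Already for $f\equiv 1$ with $F$ non-atomic, the random Borel set $S=\{X_0,X_1,\dots\}$ has $\empd^f_n(S)=1$ for every $n$ while $F^f(S)=0$, so no common null set can carry the convergence for every $g$; the coordinatewise conclusion obtained above is nevertheless exactly what the identification of limits in the representation formula \eqref{eq:rep_formula} requires. If one wants, in addition, convergence $\empd^f_n(A)\to F^f(A)$ on a single full-measure event for all $A$ in a prescribed countable generating field of $\calB(\calX)$ (such a field exists since $\calX$ is Polish), it suffices to intersect the corresponding countably many null sets.
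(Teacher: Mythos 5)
Your core argument is correct and is, in substance, the argument the paper relies on: the paper gives no proof of Proposition \ref{prop:SLLN_IS} at all (it defers to the unweighted case in Dupuis--Ellis and asserts the weighted case is similar), and the only real content in the weighted case is exactly your reduction --- for fixed bounded measurable $g$ the variables $Y_j=f(X_j)g(X_j)$ are i.i.d.\ with $\E|Y_0|\le \|g\|_\infty F(f)<\infty$ and mean $F^f(g)$, so Kolmogorov's strong law gives $\empd^f_n(g)\to F^f(g)$ almost surely, with finiteness of $F^f$ coming from $g\equiv 1$. Where you go beyond the paper is the discussion of null sets, and your observation is correct and worth keeping: already for $f\equiv 1$ and non-atomic $F$, the countable random set $S=\{X_0,X_1,\dots\}$ shows that on no event of positive probability can the convergence hold simultaneously for \emph{all} bounded measurable $g$, so the proposition cannot be read as genuine $\tau$-convergence outside one universal exceptional set; the coordinatewise reading is the strongest true version. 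The one claim you should sharpen is that this coordinatewise version is ``exactly what is required'' downstream. The proposition is actually invoked in the Laplace lower bound to conclude $h(\Psi(\bar{\empd}_n))\to h(\Psi(G))$ a.s.\ for a fixed bounded $\tau$-continuous $h$, which at first sight needs convergence against uncountably many test functions on a single event. It does follow from your version, but only via the extra (easy) remark that continuity of $h$ at the single point $\Psi(G)$ is witnessed by a countable family of basic $\tau$-neighborhoods, each determined by finitely many bounded measurable test functions, so that only countably many of your $g$-dependent null sets ever need to be discarded for the fixed $h$ at hand. With that sentence added, your proof fully supports the use made of the proposition in Section \ref{sec:proofs}.
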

A proof of the corresponding result for empirical measures, i.e., no weights, is found in \cite{Dupuis97} and the case of weighted measures can be treated in a similar way; we omit the proof.

Fix a probability measure $G \in \Delta \cap \Gamma$. Define the sequence of controls $\{ G _{n,j} \}$ by $G _{n,j} = G$ for each $j=0,1,...,n-1$, so that in every step, the control does not depend on the controlled process. All the $\bar{X} _{n,j}$'s are independent and identically distributed with common distribution $G$ and the associated controlled process $\bar{\empd} _n$ belongs to $\Gamma$ with probability 1. Using the representation formula for $W^n$, it follows that
$$ W^n \leq  \Eb \biggl[\frac{1}{n} \sum _{k=0} ^{n-1} \re (G \mid \tilde{F}) + \bar{h}(\bar{\empd} _n)] = \re (G \mid \tilde{F}) + \Eb [h(\Psi(\bar{\empd} _n)) \biggr].$$
The product $wf$ is non-negative, measurable and $G$-integrable. By the choice $G$ and the construction of the controlled process $\bar{\empd} _n$, Proposition \ref{prop:SLLN_IS} implies that with probability 1, $\Psi (\bar{\empd} _n;\cdot) \tauconv \Psi (G;\cdot)$. By assumption $h$ is bounded and continuous with respect to the $\tau$-topology. Thus, $h(\Psi (\bar{\empd} _n)) \to h(\Psi (G))$ with probability 1. By the dominated convergence theorem, 
$$ \Eb [h(\Psi (\bar{\empd} _n))] \to h(\Psi (G)),$$
and we conclude that for any $G \in \Delta \cap \Gamma$
$$ \limsup _n W^n \leq \re (G \mid \tilde{F}) + h(\Psi (G)).$$
Finally, taking infimum over $G$ in $\Delta \cap \Gamma$ on the right-hand side proves the upper bound \eqref{eq:Wn_upper}, and thus the Laplace principle lower bound.
\subsection{Laplace principle upper bound}
Just as for the lower bound, the Laplace principle upper bound
\begin{equation}
\label{eq:laplaceUpper}
 \limsup _n \frac{1}{n}\log \E[e^{-nh(\edfis ^{wf} _n)}] \leq - \inf_ {G \in \Delta \cap \Gamma} [h(\Psi(G)) + \re(G \mid \tilde{F})],
\end{equation}
can be stated as a lower limit for the minimal cost $W^n$,
\begin{equation}
\label{Wn_lower}
 \liminf _n W^n \geq \inf_{G \in \Delta \cap \Gamma} [h(\Psi(G)) + \re(G \mid \tilde{F})].
 \end{equation}
To prove \eqref{Wn_lower} it is enough to show that every subsequence has a further subsequence that satisfies the lower limit. Therefore, we henceforth work with a fixed subsequence also denoted by $W^n$.

Since $\re (\cdot \mid \tilde{F})$ is a convex function \cite[Propisition 1.4.3]{Dupuis97},
$$ \frac{1}{n} \sum _{j=0} ^{n-1} \re (G_{n,j}(\cdot \mid \bar{\empd} _{n,j}) \mid \tilde{F}) \geq  \re \biggl( \frac{1}{n} \sum _{j=0} ^{n-1} G_{n,j}(\cdot \mid \bar{\empd} _{n,j}) \mid \tilde{F} \biggr), $$ 
and together with the representation formula this gives the lower bound
$$ W ^n \geq \inf _{\{ G_{n,j} \} } \Eb [\re (\bar{G}_n \mid \tilde{F}) + \bar{h} (\bar{\empd} _n)],$$
where $\bar G _n$ is defined as $\bar{G} _n = (1/n) \sum _{j=0} ^{n-1} G_{n,j}(\cdot \mid \bar{\empd} _{n,j})$, 
By optimality, for every $\epsilon > 0$, there exists a control sequence $\{ G_{n,j} \}$ and associated $\bar{G} _n$ such that
\begin{equation}
\label{eq:cont_seq} 
W^n + \epsilon \geq \Eb [\re (\bar{G}_n \mid \tilde{F}) + \bar{h} (\bar{\empd} _n)].
\end{equation} 
There is no restriction on assuming $G_{n,j} \in \Delta \cap \Gamma$ for each $j$ and for the remainder of this section this assumption is made.

To show the Laplace principle upper bound it is now enough to prove the following result.
\begin{proposition}
\label{prop:Upper}
	Every subsequence of $\{ (\bar{G}_n, \bar{\empd}_n ) \}$ has a further subsequence, also denoted $\{ (\bar{G}_n, \bar{\empd}_n ) \}$, such that  $\left (\Psi (\bar{G}_n), \Psi (\bar{\empd}_n) \right) \tauconv  (\Psi (\bar{\empd}), \Psi (\bar{\empd}) )$ with probability 1 along this subsequence, and where $\bar{\empd}$ belongs to $\Delta \cap \Gamma$ with probability 1.
\end{proposition}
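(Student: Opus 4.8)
The plan is to turn the near-optimality inequality \eqref{eq:cont_seq} into a uniform bound on the average relative entropy of the controls, and then run the weak-convergence machinery of \cite{Dupuis97}: tightness from Condition~(i), extraction of an almost surely convergent subsequence via Prokhorov and Skorokhod, identification of the common limit by a martingale law of large numbers, and finally an upgrade from weak to $\tau$-convergence of the $\Psi$-images using the exponential integrability of Condition~(ii).

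First, since $h$ is bounded, $W^n$ is bounded and $\bar h$ is bounded below, so \eqref{eq:cont_seq} yields $C := \sup_n \Eb[\re(\bar G_n \mid \tilde F)] < \infty$. Applying the variational formula for relative entropy \cite[Proposition 4.5.1]{Dupuis97}, $\nu(\phi) \le \log\int e^{\phi}\,d\tilde F + \re(\nu\mid\tilde F)$, with $\phi = U$ gives $\Eb[\int U\,d\bar G_n] \le \log\int e^{U}\,d\tilde F + C$, and the conditioning identity $\Eb[\int U\,d\bar{\empd}_n] = \Eb[\tfrac1n\sum_j \int U\,dG_{n,j}(\cdot\mid\bar{\empd}_{n,j})] = \Eb[\int U\,d\bar G_n]$ gives the same bound for $\bar{\empd}_n$. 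Since $U$ has relatively compact level sets, Markov's inequality makes the laws of $\bar G_n$ and of $\bar{\empd}_n$ tight on $\M_1$ with the weak topology, while the laws of $\re(\bar G_n\mid\tilde F)$ are tight on $[0,\infty]$ because bounded in $L^1$. Passing to a further subsequence and realizing the variables on a common probability space, I would arrange that $\bar G_n \to \mu$, $\bar{\empd}_n \to \nu$ weakly and $\re(\bar G_n\mid\tilde F)\to\mathcal R$ almost surely, with $\mathcal R<\infty$ a.s.\ (so $\sup_n\re(\bar G_n\mid\tilde F)<\infty$ along almost every path). Then $\mu=\nu$: for bounded continuous $g$, $\bar{\empd}_n(g)-\bar G_n(g) = \tfrac1n\sum_{j=0}^{n-1}\bigl(g(\bar X_{n,j}) - \int g\,dG_{n,j}(\cdot\mid\bar{\empd}_{n,j})\bigr)$ is an average of bounded martingale increments and tends to $0$. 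Write $\bar{\empd}:=\mu=\nu$.

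Next, working along a fixed good path, I would prove $\bar{\empd}\in\Delta\cap\Gamma$ and upgrade the convergence of $\bar G_n$. Lower semicontinuity of relative entropy gives $\re(\bar{\empd}\mid\tilde F)\le\liminf_n\re(\bar G_n\mid\tilde F)<\infty$, so $\bar{\empd}\in\Delta$ and the densities $\rho_n = d\bar G_n/d\tilde F$, $\rho = d\bar{\empd}/d\tilde F$ exist. Since $\sup_n\int\rho_n\log\rho_n\,d\tilde F<\infty$, the de la Vall\'ee-Poussin criterion makes $\{\rho_n\}$ uniformly integrable with respect to $\tilde F$; combined with the weak convergence of $\bar G_n$ this forces $\rho_n\rightharpoonup\rho$ in $L^1(\tilde F)$, hence $\bar G_n(g)\to\bar{\empd}(g)$ for every bounded measurable $g$, i.e.\ $\bar G_n\tauconv\bar{\empd}$. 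For the weight, the variational formula with $\phi = \alpha\, w f\, I\{wf>M\}$ gives $\alpha\int_{\{wf>M\}}wf\,d\bar G_n \le \log\bigl(1+\int_{\{wf>M\}}e^{\alpha wf}\,d\tilde F\bigr) + \sup_n\re(\bar G_n\mid\tilde F)$, and Condition~(ii) lets me send $M\to\infty$ and then $\alpha\to\infty$ to conclude $\sup_n\bar G_n(wf\,I\{wf>M\})\to0$. Splitting $wf = (wf\wedge M) + (wf-M)^+$ and combining $\tau$-convergence of $\bar G_n$ on the bounded part with this uniform integrability on the tail yields $\Psi(\bar G_n;g) = \bar G_n(gwf)\to\bar{\empd}(gwf) =: \Psi(\bar{\empd};g)$ for every bounded measurable $g$; taking $g\equiv1$ shows $\bar{\empd}(wf)<\infty$, so $\bar{\empd}\in\Gamma$, $\Psi(\bar{\empd})$ is a well-defined finite measure, and $\Psi(\bar G_n)\tauconv\Psi(\bar{\empd})$.

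Finally I would transfer this to $\bar{\empd}_n$: for fixed bounded measurable $g$ and truncation level $M$, $\bar{\empd}_n\bigl(g(wf\wedge M)\bigr) - \bar G_n\bigl(g(wf\wedge M)\bigr)$ is again an average of bounded martingale increments tending to $0$, so it converges to $\bar{\empd}\bigl(g(wf\wedge M)\bigr)$, while the tail $\tfrac1n\sum_j g(wf-M)^+(\bar X_{n,j})$ is controlled, uniformly in $n$, by a martingale truncation estimate that uses the moment bounds Condition~(ii) supplies for $wf$; letting $n\to\infty$ and then $M\to\infty$ gives $\Psi(\bar{\empd}_n;g)\to\Psi(\bar{\empd};g)$, hence $\Psi(\bar{\empd}_n)\tauconv\Psi(\bar{\empd})$. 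The main obstacle is precisely this last upgrade and its counterpart for $\bar G_n$: the $\tau$-topology is neither metrizable nor Polish, which is exactly why the statement is phrased through the $\sigma$-field $\calF_{\M}$ and through the images $\Psi(\bar G_n),\Psi(\bar{\empd}_n)$ rather than the measures themselves, so that convergence need only be verified on the countable family of test functions on which the $\calF_{\M}$-measurable, $\tau$-continuous $h$ actually depends (and, where useful, one may restrict to a sequentially compact level set of $I$). The two uniform-integrability inputs — de la Vall\'ee-Poussin for the densities $\rho_n$ and Condition~(ii) for the unbounded weight $wf$, the latter being what makes the martingale estimate with unbounded increments go through — are the technical heart of the argument; the clean part is $\bar G_n$, where uniform integrability of the densities gives weak $L^1$ convergence and hence setwise convergence in one stroke, whereas for the atomic measures $\bar{\empd}_n$ one must route everything through the comparison with $\bar G_n$.
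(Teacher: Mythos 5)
Your proposal is correct in substance and follows the same overall architecture as the paper's proof: the uniform entropy bound (Lemma \ref{lemma:Step1}), tightness from Condition (i) (Lemma \ref{lemma:Step2}), extraction of an a.s.\ weakly convergent subsequence with the two limits identified via a martingale law of large numbers (Lemma \ref{lemma:Step3}), the upgrade to $\tau$-convergence (Lemma \ref{lemma:Step4}), integrability of $wf$ under the limit (Lemma \ref{lemma:Step5}), and the transfer through $\Psi$ by truncating $wf$ and comparing $\bar{\empd}_n$ with $\bar{G}_n$ via a martingale/Chebyshev estimate (Lemma \ref{lemma:Mapping}). The one step where you genuinely diverge is the weak-to-$\tau$ upgrade for $\bar{G}_n$: you invoke de la Vall\'ee-Poussin/Dunford--Pettis on the densities $d\bar{G}_n/d\tilde F$ to obtain weak $L^1(\tilde F)$ convergence, hence setwise convergence, in one stroke; the paper instead follows \cite[Lemma 9.3.3]{Dupuis97}, approximating a bounded measurable $g$ by continuous functions $\tilde F$-a.s.\ and controlling the error uniformly in $n$ with the inequality \eqref{eq:ineq}. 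Your route is cleaner for $\bar{G}_n$, but it buys nothing for $\bar{\empd}_n$, which is purely atomic and not absolutely continuous with respect to $\tilde F$, so that part must still be routed through the comparison with $\bar{G}_n$ exactly as in the paper. Likewise, your use of the Donsker--Varadhan variational formula where the paper uses \eqref{eq:ineq} is an essentially equivalent substitution.

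Two points need tightening. First, the variational formula is stated for bounded measurable integrands, so applying it with $\phi = U$ or $\phi = \alpha\, wf\, I\{wf>M\}$ requires a truncation and monotone convergence step (the paper truncates $U$ and works with \eqref{eq:ineq} for $wf$). Second, and more substantively, your final step yields only convergence \emph{in probability} of $\Psi(\bar{\empd}_n;g)$ to $\Psi(\bar{\empd};g)$ for each fixed $g$: the martingale variance bound is of order $m^2/n$ and the tail bound is a Chebyshev estimate. To obtain a.s.\ $\tau$-convergence of the measures $\Psi(\bar{\empd}_n)$ one must extract a further subsequence along which the error probabilities are summable, apply the first Borel--Cantelli lemma simultaneously over a countable generating class of sets, and then extend to all bounded measurable $g$; this is precisely the quantitative choice of $\eta$, $\sigma_k$, $m_k$, $n_k$ at the end of the paper's proof of Lemma \ref{lemma:Mapping}. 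You acknowledge the countable family in your closing paragraph, but the subsequence extraction is a genuine step of the argument rather than bookkeeping; since the proposition only asserts convergence along a further subsequence, this is repairable as stated.
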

Suppose that Proposition \ref{prop:Upper} holds. A proof of the Laplace principle upper bound follows from Fatou's lemma and the lower semi-continuity of the relative entropy mapping $G \mapsto \re (G \mid \tilde{F})$:
\begin{align*}
	\epsilon + \liminf _n W^n &\geq \liminf _n \bar{\E}[\re (\bar{G}_n \mid \tilde{F}) + \bar{h}(\bar{\empd}_n)] \\
	&\geq \bar{\E}[\liminf _n \re (\bar{G}_n \mid \tilde{F}) + \liminf _n h(\Psi(\bar{\empd}_n))] \\
	&\geq \Eb [\re (\bar{\empd} \mid \tilde{F}) + \bar{h} (\bar{\empd})] \\
	&\geq \inf _{G \in \Delta \cap \Gamma} \left \{ \re (G \mid \tilde{F}) + h(\Psi (G))\right \},
\end{align*} 
where in the last step we used that $\bar{\empd}$ is in $\Delta \cap \Gamma$ with probability 1.

Proposition \ref{prop:Upper} is proved by a series of lemmas. The idea is to first work with $\M _1$ equipped with the weak topology and show that $\{ (\bar{G}_n, \bar{\empd}_n ) \}$ is tight in this topology. By showing that $\Eb [\re (\bar{G} _n | \tilde{F})]$ is uniformly bounded (Lemma \ref{lemma:Step1}) the tightness of $\{ (\bar{G}_n, \bar{\empd}_n ) \}$ is obtained by showing tightness of each of the marginals. Prohorov's theorem implies relative compactness and thus each subsequence has a subsubsequence converging to some random element $(\bar{G}, \bar{\empd})$. Lemma \ref{lemma:Step3}, which corresponds to  \cite[Lemma 2.5.1]{Dupuis97}, concludes that $\bar{G} = \bar{\empd}$ w.p.\ 1, thus establishing that w.p.\ 1 for each subsequence, $$ (\bar{G} _n, \bar{\empd} _n) \weak (\bar{\empd}, \bar{\empd}),$$
along some further subsequence. Next, Lemma \ref{lemma:Step4} establishes that the convergences of the marginals $\bar{G}_n$ and $\bar{\empd} _n$ to $\bar{\empd}$ are still valid when $\M _1$ is equipped with the $\tau$-topology. The main ingredient of the proof is an approximation argument introduced in \cite[Lemma 9.3.3]{Dupuis97} and Lemma \ref{lemma:Step4} is a version of that result in the simpler setting where the underlying random variables are independent and identically distributed. 

Once the convergence in $\M _1$ equipped with the $\tau$-topology is established w.p.\ 1, it remains to show that it is preserved under the mapping $\Psi$. Lemma \ref{lemma:Step5} proves that $\bar{\empd}$ is in $\Gamma$ w.p.\ 1, which implies that $\Psi(\bar{\empd};\cdot)$ is well-defined. The main additional difficulty is handled in Lemma \ref{lemma:Mapping} where a truncation argument is used to prove that $\Psi(\bar{G} _n ; \cdot)$ converges to $\Psi(\bar{\empd} ; \cdot)$  and 
$\Psi(\bar{\empd} _n; \cdot )$ converges to $\Psi(\bar{\empd} ; \cdot)$ 
in the $\tau$-topology on $\M$.

\begin{lemma}
\label{lemma:Step1}
	For a sequence $\{ \bar{G} _n \}$ of control sequences such that \eqref{eq:cont_seq} holds and $G_{n,j} \in \Gamma \cap \Delta$ for each $j$ and $n$, it holds that 
$$ \sup _n \bar{\E}[\re (\bar{G}_n \mid \tilde{F})] < \infty. $$
\end{lemma}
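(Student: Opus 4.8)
The plan is to combine the near-optimality inequality \eqref{eq:cont_seq} with two elementary bounds that both stem from the boundedness of $h$: an a priori upper bound on $W^n$, and a lower bound on the penalization term $\bar h(\bar\empd_n)$. No weak-convergence or compactness input is needed for this lemma — it is a purely quantitative step that feeds the subsequent tightness arguments.

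First I would set $M = \sup_{\nu\in\M}|h(\nu)|$, which is finite since $h$ is bounded. From the defining relation $W^n = -\tfrac1n\log\widetilde\E[e^{-nh(\edfis^{wf}_n)}]$ and the pointwise estimate $e^{-nh(\edfis^{wf}_n)} \ge e^{-nM}$, it follows immediately that $W^n \le M$ for every $n$. Next I would verify that, for the control sequence $\{G_{n,j}\}$ appearing in \eqref{eq:cont_seq}, the associated controlled empirical measure $\bar\empd_n$ lies in $\Gamma$ with probability one: by the standing assumption each kernel $G_{n,j}(\cdot\mid\bar\empd_{n,j})$ belongs to $\Gamma$, hence assigns no mass to $\{x: w(x)f(x) = \infty\}$, so the sampled points $\bar X_{n,0},\dots,\bar X_{n,n-1}$ avoid that set almost surely and $\bar\empd_n(wf) = \tfrac1n\sum_{k=0}^{n-1} w(\bar X_{n,k})f(\bar X_{n,k}) < \infty$ a.s. Consequently $\bar h(\bar\empd_n) = h(\Psi(\bar\empd_n))$ a.s., and in particular $\bar h(\bar\empd_n) \ge -M$ a.s., so $\Eb[\bar h(\bar\empd_n)] \ge -M$.

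Plugging these two bounds into \eqref{eq:cont_seq} yields
\[
\Eb[\re(\bar G_n\mid\tilde F)] \le W^n + \epsilon - \Eb[\bar h(\bar\empd_n)] \le 2M + \epsilon,
\]
uniformly in $n$, which gives $\sup_n\Eb[\re(\bar G_n\mid\tilde F)] < \infty$. There is no substantive obstacle in this argument; the only point requiring care is the finiteness of $\bar h(\bar\empd_n)$, i.e.\ that $\bar\empd_n\in\Gamma$ a.s., without which the lower bound $\bar h(\bar\empd_n)\ge -M$ would be vacuous. This is exactly what the hypothesis $G_{n,j}\in\Gamma\cap\Delta$ provides, and it is the same observation already used (for constant controls) in the proof of the Laplace principle lower bound.
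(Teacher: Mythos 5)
Your proof is correct and follows essentially the same route as the paper's: rearrange \eqref{eq:cont_seq} using the fact that $\bar{h}$ is bounded (by $M$, say) on $\Gamma$ and that $\bar{\empd}_n \in \Gamma$ almost surely, so that $\Eb[\re(\bar{G}_n \mid \tilde{F})] \leq W^n + \epsilon + M$. The only (minor) difference is how the uniform bound on $W^n$ is obtained: you derive $W^n \leq M$ directly from the definition $W^n = -\tfrac{1}{n}\log \widetilde{\E}[e^{-nh(\edfis^{wf}_n)}]$, which is slightly more self-contained than the paper's appeal to $\limsup_n W^n < \infty$ from the already-proved Laplace principle lower bound.
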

\begin{proof} Since $\bar{h}$ is bounded on $\Gamma$, it is possible to find a constant $M < \infty$ such that $\bar{h} $ over $\Gamma$ is bounded my $M$; $\sup _{G \in \Gamma} \mid \bar{h}(G) \mid \leq M$. The choice of control sequence $\{ G_{n,j} \}$ implies that $\bar{\empd} _n \in \Gamma$ with probability 1 for all $n$. Therefore, 
\begin{align*}
\sup _n \Eb [\re (\bar{G} _n \mid \tilde{F})] &= \sup _n \Eb [\re (\bar{G} _n \mid \tilde{F}) - M] + M \\
	&\leq \sup _n \Eb[\re (\bar{G} _n \mid \tilde{F}) + \bar{h}(\bar{\empd} _n)] + M \\
	&\leq \sup _n (W^n + \epsilon) +M \\
	&= \sup _n W^n + \epsilon + M.
\end{align*} 
For each $n$, $\Eb [\re (\bar{G} _n \mid \tilde{F})] < \infty$ and by the proof of the Laplace principle lower bound $\limsup _n W ^n < \infty$, which shows that $\sup _n \Eb [\re (\bar{G} _n \mid \tilde{F})] < \infty$. 
\end{proof}
The uniform boundedness established in Lemma \ref{lemma:Step1} is used to prove the tightness of the sequence of admissible control measures. 
\begin{lemma}
\label{lemma:Step2}
	Under condition (i) of Theorem \ref{thm:Laplace}, the sequence
$$ \left\{ \frac{1}{n}\sum _{i=0} ^{n-1} G_{n,j} (\cdot \mid \bar{\empd} _n) \times \bar{\empd} _n \right\} = \left\{ (\bar{G} _n \times \bar{\empd} _n)\right\},$$
of admissible control measures in $\M_1 \times \M_1$ is tight in the weak topology.
\end{lemma}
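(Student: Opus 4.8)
The plan is to reduce tightness of the random measures $\bar{G}_n\times\bar{\empd}_n$ to a uniform bound on the mean mass they place outside a single compact set, and to extract such a compact set from the function $U$ in condition (i) using the bound on $\Eb[\re(\bar{G}_n\mid\tilde{F})]$ supplied by Lemma \ref{lemma:Step1}. I would first invoke the standard criterion (a consequence of Prohorov's theorem; see \cite{Dupuis97}) that a family of random probability measures on a Polish space is tight, as a family of random elements of the space of probability measures equipped with the weak topology, precisely when the family of their mean measures is tight. Since $\calX$, and hence $\calX\times\calX$, is Polish, and since $\big(\bar{G}_n\times\bar{\empd}_n\big)\big((K\times K)^c\big)\le\bar{G}_n(K^c)+\bar{\empd}_n(K^c)$ for any measurable $K\subset\calX$, it suffices to produce, for every $\epsilon>0$, a compact $K\subset\calX$ with $\sup_n\Eb[\bar{G}_n(K^c)]\le\epsilon$ and $\sup_n\Eb[\bar{\empd}_n(K^c)]\le\epsilon$; then $K\times K$ is compact in $\calX\times\calX$ and the product sequence is tight.

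Next I would take $U$ as in condition (i) and, for $M>0$, let $K_M$ be the closure of the level set $\{U\le M\}$, which is compact since $U$ has relatively compact level sets. Because $\mathbf 1_{K_M^c}\le U/M$ pointwise, and because each control $G_{n,j}$ lies in $\Delta$ (so $\bar{G}_n\ll\tilde{F}$ pathwise), the variational formula for relative entropy \cite[Proposition 4.5.1]{Dupuis97} applied to the $\tilde{F}$-integrable function $e^U$ gives, pathwise,
\begin{equation*}
\bar{G}_n(K_M^c)\le\frac{1}{M}\int U\,d\bar{G}_n\le\frac{1}{M}\Big(\re(\bar{G}_n\mid\tilde{F})+\log\int e^U\,d\tilde{F}\Big).
\end{equation*}
Taking expectations and using Lemma \ref{lemma:Step1}, $\sup_n\Eb[\bar{G}_n(K_M^c)]\le C/M$ with $C=\sup_n\Eb[\re(\bar{G}_n\mid\tilde{F})]+\log\int e^U\,d\tilde{F}<\infty$. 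For the second marginal I would use that the conditional law of $\bar{X}_{n,k}$ given $\bar{\empd}_{n,0},\dots,\bar{\empd}_{n,k}$ is $G_{n,k}(\cdot\mid\bar{\empd}_{n,k})$, so by the tower property $\Eb[\bar{\empd}_n(K_M^c)]=\Eb\big[\tfrac{1}{n}\sum_{k=0}^{n-1}G_{n,k}(K_M^c\mid\bar{\empd}_{n,k})\big]=\Eb[\bar{G}_n(K_M^c)]\le C/M$. Choosing $M$ large enough that $C/M\le\epsilon$ then produces the required $K=K_M$.

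In this argument essentially all the content sits in Lemma \ref{lemma:Step1} together with the elementary observation that the two mean marginals coincide; the remaining steps are routine. The only points that need a little care are the reduction to the mean-measure criterion for random measures and the treatment of the value $+\infty$ of $U$, which is harmless since $\int e^U\,d\tilde{F}<\infty$ forces $\tilde{F}(U=\infty)=0$ while every $\bar{G}_n$ is absolutely continuous with respect to $\tilde{F}$.
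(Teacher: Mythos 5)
Your proof is correct, and it takes a genuinely different route from the paper's. The paper's argument (modelled on the Markov-chain result, Proposition 8.2.5 of \cite{Dupuis97}) first reduces tightness of the product to tightness of each marginal and then treats the two marginals by different devices: $\{\bar{G}_n\}$ via Lemma \ref{lemma:Step1} and the fact that $\re(\cdot \mid \tilde{F})$ is a tightness function, and $\{\bar{\empd}_n\}$ via a telescoping/martingale argument with the tightness function $c(x) = U(x) - \log\int e^{U}d\tilde{F}$ and the truncations $U \wedge k$, which also forces an auxiliary normalization of the law of $\bar{X}_{n,0}$. You instead funnel everything through a single uniform bound $\sup_n \Eb[\bar{G}_n(K_M^c)] \leq C/M$ over the compact closures $K_M$ of the level sets of $U$, obtained from the Donsker--Varadhan inequality together with Lemma \ref{lemma:Step1}, and then transfer it to $\bar{\empd}_n$ via the exact mean-measure identity $\Eb[\bar{\empd}_n(B)] = \Eb[\bar{G}_n(B)]$ --- the same conditioning identity the paper itself uses later in the proof of Lemma \ref{lemma:Step5}. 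This is cleaner, avoids any assumption on the initial point, and exploits the i.i.d.\ structure that the paper's more general argument deliberately does not use. Two points to make explicit if you write this up: the inequality $\int U\, d\bar{G}_n \leq \re(\bar{G}_n \mid \tilde{F}) + \log\int e^{U}d\tilde{F}$ requires passing through the bounded truncations $U\wedge k$ and monotone convergence, since the variational formula in \cite{Dupuis97} is stated for bounded measurable integrands (the paper performs exactly this truncation); and the reduction of tightness of a family of random probability measures to tightness of their mean measures is the standard criterion from the appendix of \cite{Dupuis97} and deserves a precise citation rather than a generic one.
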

Lemma \ref{lemma:Step2} is a special case of Proposition 8.2.5 in \cite{Dupuis97} which establishes the result in the more general context of Markov chains. The proof is therefore omitted. 

Having established the tightness of the distributions of $\{ (\bar {G} _n, \bar {\empd} _n )\}$, the next step is to extend this to almost sure convergence of subsequences, in the weak topology, and study the limit.
\begin{lemma}
\label{lemma:Step3}
	Given any subsequence of $\{ (\bar{G} _n, \bar{\empd} _n )\}$, there exists a further subsequence that converges in distribution to some random variable $(\bar{G}, \bar{\empd})$, where $\bar{G} = \bar{\empd}$ a.s.
\end{lemma}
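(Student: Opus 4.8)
The plan is to follow the standard weak-convergence argument of \cite[Lemma 2.5.1]{Dupuis97}. First I would invoke Lemma \ref{lemma:Step2}: it gives tightness of $\{\bar{G}_n \times \bar{\empd}_n\}$ in the weak topology, which forces tightness of the pair $\{(\bar{G}_n, \bar{\empd}_n)\}$ of $\M_1 \times \M_1$-valued random variables (each marginal is tight). Since $\calX$ is Polish, $\M_1$ with the weak topology is Polish, hence so is $\M_1 \times \M_1$, and Prohorov's theorem applies: any subsequence has a further subsequence converging in distribution to some $\M_1 \times \M_1$-valued random variable $(\bar{G}, \bar{\empd})$. The remaining work is to show that the two coordinates of the limit coincide a.s.

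The key estimate is a martingale-difference bound. For a bounded continuous $g : \calX \to \R$, set
\begin{align*}
	\xi_{n,k} = \frac{1}{n}\Bigl( g(\bar{X}_{n,k}) - \int g \, dG_{n,k}(\cdot \mid \bar{\empd}_{n,k}) \Bigr), \qquad k = 0, \dots, n-1,
\end{align*}
so that $S_n := \sum_{k=0}^{n-1} \xi_{n,k} = \bar{\empd}_n(g) - \bar{G}_n(g)$. By the defining conditional law of $\bar{X}_{n,k}$ given $\bar{\empd}_{n,0}, \dots, \bar{\empd}_{n,k}$, the $\xi_{n,k}$ are martingale differences with $|\xi_{n,k}| \le 2\|g\|_\infty / n$, so $\Eb[S_n^2] = \sum_{k=0}^{n-1} \Eb[\xi_{n,k}^2] \le 4\|g\|_\infty^2 / n \to 0$. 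Hence $\bar{\empd}_n(g) - \bar{G}_n(g) \to 0$ in $L^2(\bar{\Prob})$, and in particular in probability, for every fixed bounded continuous $g$.

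To conclude, I would fix a countable family $\{g_i\}$ of bounded continuous functions that is convergence-determining for the weak topology on $\M_1$. Along the chosen subsequence, the Skorokhod representation theorem realizes $(\bar{G}_n, \bar{\empd}_n) \weak (\bar{G}, \bar{\empd})$ as almost sure convergence on a common space, so $\bar{G}_n(g_i) \to \bar{G}(g_i)$ and $\bar{\empd}_n(g_i) \to \bar{\empd}(g_i)$ a.s.\ for each $i$; passing to a further subsequence and diagonalizing over $i$, one may also assume $\bar{\empd}_n(g_i) - \bar{G}_n(g_i) \to 0$ a.s.\ for each $i$. Combining the three limits yields $\bar{G}(g_i) = \bar{\empd}(g_i)$ a.s.\ for all $i$, and since $\{g_i\}$ is convergence-determining this gives $\bar{G} = \bar{\empd}$ a.s.

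The only delicate point is that $G_{n,j}$ depends on the past through $\bar{\empd}_{n,j}$, so the samples $\bar{X}_{n,k}$ are not independent; the martingale-difference structure is precisely what absorbs this dependence, and the variance bound above is essentially the only estimate required. Measurability subtleties of the $\tau$-topology do not intervene at this stage because the entire argument is carried out in the (Polish) weak topology — upgrading to the $\tau$-topology is deferred to Lemma \ref{lemma:Step4}. So I expect no genuine obstacle here beyond setting up the martingale bound correctly.
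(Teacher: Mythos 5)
Your argument is correct and is precisely the one the paper intends: the paper omits the proof, noting only that it is the argument of \cite[Lemma 2.5.1(b)]{Dupuis97} with compactness of $\calX$ replaced by the tightness supplied by Lemma \ref{lemma:Step2}, and your proof reproduces exactly that — Prohorov's theorem for subsequential limits, the martingale-difference bound $\Eb[(\bar{\empd}_n(g)-\bar{G}_n(g))^2]\le 4\|g\|_\infty^2/n$ exploiting the conditional law of $\bar{X}_{n,k}$, and identification of the limits via a countable convergence-determining class. No gaps.
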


The result in Lemma \ref{lemma:Step3} is practically identical to part (b) of Lemma 2.5.1 in \cite{Dupuis97}. The only difference is that we must now appeal to the tightness proved in Lemma \ref{lemma:Step2}, whereas in \cite{Dupuis97} the underlying space is assumed to be compact; we omit the proof.

The next step is to show that the weak convergence of subsubsequences actually implies convergence of $\bar{G} _n$ and $\bar{\empd} _n$ in the $\tau$-topology. The result, Lemma \ref{lemma:Step4}, is a version of  Lemma 9.3.3 in \cite{Dupuis97} adapted to the case of independent and identically distributed random variables. Recall that we are already working with a specific subsubsequence (indexed by $n$) and on a probability space where the convergences $\bar{G}_n \weak \bar{\empd}$ and $\bar{\empd}_n \weak \bar{\empd}$ both occur with probability 1. Henceforth, $\M _1$ will be equipped with the $\tau$-topology.
\begin{lemma}
\label{lemma:Step4}
	Under the conditions of Lemmas \ref{lemma:Step1}-\ref{lemma:Step3}, there exists some subsequence of $n \in \N$ such that $\bar{G} _n \tauconv \bar{\empd}$ and $\bar{\empd} _n \tauconv \bar{\empd}$ w.p.\ 1 along this subsequence.
\end{lemma}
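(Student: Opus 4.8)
The plan is to upgrade the weak convergences $\bar G_n \weak \bar{\empd}$ and $\bar{\empd}_n \weak \bar{\empd}$ (already available w.p.\ 1 along the fixed subsubsequence) to convergence in the $\tau$-topology, i.e.\ to show $\bar G_n(g) \to \bar{\empd}(g)$ and $\bar{\empd}_n(g) \to \bar{\empd}(g)$ for every \emph{bounded measurable} $g$, not merely for bounded continuous $g$. The obstacle is that a bounded measurable $g$ need not be continuous, so the portmanteau characterization of weak convergence does not apply directly. The standard device, following \cite[Lemma 9.3.3]{Dupuis97}, is an approximation argument: approximate $g$ in an appropriate $L^1$-sense by bounded continuous functions, and control the approximation error uniformly in $n$ using the uniform entropy bound from Lemma \ref{lemma:Step1} together with condition (i) of Theorem \ref{thm:Laplace}.

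Concretely, I would first reduce to $\bar{\empd}_n$; the argument for $\bar G_n$ is the same, with the uniform bound $\sup_n \bar\E[\re(\bar G_n \mid \tilde F)] < \infty$ from Lemma \ref{lemma:Step1} playing the role that is otherwise automatic for empirical measures. Fix a bounded measurable $g$ with $\|g\|_\infty \le 1$, say. Given $\eta > 0$, use the fact that $\tilde F$ is a Borel measure on the Polish space $\calX$, so that (by Lusin's theorem or density of $C_b$ in $L^1(\tilde F)$) there is a bounded continuous $\varphi$ with $\|\varphi\|_\infty \le 1$ and $\int |g - \varphi|\, d\tilde F < \eta$. Then write
\begin{align*}
  |\bar{\empd}_n(g) - \bar{\empd}(g)|
  &\le |\bar{\empd}_n(g - \varphi)| + |\bar{\empd}_n(\varphi) - \bar{\empd}(\varphi)| + |\bar{\empd}(\varphi - g)|.
\end{align*}
The middle term tends to $0$ w.p.\ 1 by weak convergence. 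For the last term, $\bar{\empd} \ll \tilde F$ w.p.\ 1 (a consequence of $\re(\bar{\empd} \mid \tilde F) < \infty$, which holds since $\bar{\empd} \in \Delta$ by the construction leading to Proposition \ref{prop:Upper}), so $|\bar{\empd}(\varphi - g)|$ can be made small once $\eta$ is small, using absolute continuity and, if needed, a uniform-integrability estimate for $d\bar{\empd}/d\tilde F$ coming from the finite entropy. The first term is the crux: $\bar{\empd}_n$ is \emph{not} absolutely continuous with respect to $\tilde F$, so $\int|g-\varphi|\,d\tilde F$ small does not by itself control $\bar{\empd}_n(|g-\varphi|)$.

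To handle the first term I would exploit that $\bar{\empd}_n = n^{-1}\sum_{j} \delta_{\bar X_{n,j}}$ where $\bar X_{n,j}$ is sampled from $G_{n,j}(\cdot \mid \bar{\empd}_{n,j})$, and compare against $\tilde F$ via relative entropy. By the variational (Donsker--Varadhan) formula \cite[Proposition 4.5.1]{Dupuis97}, for any bounded measurable $\psi$ and any probability measure $Q$,
\begin{align*}
  \int \psi \, dQ \le \re(Q \mid \tilde F) + \log \int e^{\psi}\, d\tilde F.
\end{align*}
Applying this with $Q = \bar G_n$ and $\psi = \beta |g - \varphi|$ for a scaling parameter $\beta>0$, taking expectations, using Lemma \ref{lemma:Step1} and the exponential integrability of bounded functions, one gets that $\bar\E[\bar G_n(|g-\varphi|)]$ is controlled; a conditioning argument relating $\bar{\empd}_n$ to $\bar G_n$ (the conditional mean of $\bar{\empd}_n$ given the controls is $\bar G_n$) then transfers the bound to $\bar{\empd}_n$. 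The point is that the right-hand side is small when $\int e^{\beta|g-\varphi|}\,d\tilde F$ is close to $1$, which — since $|g-\varphi| \le 2$ is bounded — follows from $\int |g-\varphi|\,d\tilde F < \eta$ for $\eta$ small (expand $e^{\beta t} \le 1 + \beta t e^{2\beta}$ for $0 \le t \le 2$). Passing to a further subsequence if necessary to upgrade convergence in expectation to almost-sure convergence, and then using a countable dense (in the relevant $L^1$ sense) family of test functions plus a diagonal argument to get a single subsequence working for all bounded measurable $g$ simultaneously, completes the proof. The main difficulty, as flagged, is precisely this control of $\bar{\empd}_n$ and $\bar G_n$ on the ``bad'' function $g - \varphi$ that is small only in $L^1(\tilde F)$; the uniform entropy bound of Lemma \ref{lemma:Step1} is exactly what makes it go through, and condition (i) of Theorem \ref{thm:Laplace} supplies the tightness/uniform-integrability inputs needed to pass to the limit cleanly.
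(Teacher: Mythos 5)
Your proposal is correct and follows essentially the same route as the paper's proof (itself an adaptation of Lemma 9.3.3 in \cite{Dupuis97}): approximate a bounded measurable $g$ by bounded continuous functions in an $L^{1}(\tilde F)$ / $\tilde F$-a.s.\ sense, control the approximation error uniformly in $n$ by combining the uniform entropy bound of Lemma \ref{lemma:Step1} with an exponential-moment estimate (your Donsker--Varadhan step is just the integrated form of the paper's pointwise inequality $ab \leq e^{\sigma a} + \sigma^{-1}(b\log b - b + 1)$), transfer the bound from $\bar{G}_n$ to $\bar{\empd}_n$ through the martingale/conditioning structure, and conclude via Borel--Cantelli and a countable generating family of Borel sets. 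One caution: the absolute continuity $\bar{\empd} \ll \tilde F$ must be \emph{derived} at this stage --- the paper obtains it from $\bar{G}_n \weak \bar{\empd}$, lower semicontinuity of $\re(\cdot \mid \tilde F)$, Fatou's lemma and Lemma \ref{lemma:Step1} --- rather than quoted from the conclusion of Proposition \ref{prop:Upper}, which itself depends on the present lemma.
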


An important ingredient in the proof is the following inequality, which will appear frequently in what follows: For $a,b \geq0$ and $\sigma \geq 1$,
\begin{equation}
\label{eq:ineq}
 ab \leq e^{a \sigma} + \frac{1}{\sigma} (b \log (b) -b +1),
\end{equation}
see \cite{Dupuis97} for a proof.

With the almost sure convergence in the $\tau$-topology established the final results needed to prove Proposition \ref{prop:Upper} are obtained in Lemmas \ref{lemma:Step5} and \ref{lemma:Mapping}. We emphasize that this part of the proof is the main difference compared to Sanov's theorem and the essential ingredient needed to extend the standard result to Theorem \ref{thm:Laplace}. 
\begin{lemma}
\label{lemma:Step5}
Under the assumption $\int e^{\alpha wf} d \tilde{F} < \infty$ for any $\alpha > 0$, it holds that 
\begin{align*} 
\sup _n \Eb [\bar{G} _n (wf)] < \infty, \ \textrm{and} \ \sup _n \Eb [\bar{\empd} _n (wf)] < \infty.
\end{align*}
It follows that $\bar{\empd} \in \Gamma$ w.p.\ 1.
\end{lemma}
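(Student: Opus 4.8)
The plan is to first establish the two uniform bounds and then deduce, via Fatou's lemma and lower semicontinuity, that the limit $\bar{\empd}$ lies in $\Gamma$ almost surely. The starting point is the key inequality \eqref{eq:ineq}: for $a,b \geq 0$ and $\sigma \geq 1$, $ab \leq e^{a\sigma} + \sigma^{-1}(b\log b - b + 1)$. I would apply this pointwise in $x$ with $a = w(x)f(x)$ and $b = \frac{d\bar{G}_n}{d\tilde F}(x)$ (working on the event where $\bar{G}_n \in \Delta$, which holds a.s.\ since each $G_{n,j}\in\Delta$ by assumption and $\Delta$ is convex). Integrating against $\tilde F$ gives
\begin{align*}
	\bar{G}_n(wf) = \int w(x)f(x)\,\frac{d\bar{G}_n}{d\tilde F}(x)\,d\tilde F(x)
	\leq \int e^{\sigma w(x)f(x)}\,d\tilde F(x) + \frac{1}{\sigma}\,\re(\bar{G}_n \mid \tilde F),
\end{align*}
since $\int (b\log b - b + 1)\,d\tilde F = \re(\bar{G}_n \mid \tilde F)$. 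By hypothesis $\int e^{\sigma wf}\,d\tilde F < \infty$ for every $\sigma > 0$, so fixing any $\sigma \geq 1$ and taking expectations,
\begin{align*}
	\sup_n \Eb[\bar{G}_n(wf)] \leq \int e^{\sigma wf}\,d\tilde F + \frac{1}{\sigma}\sup_n \Eb[\re(\bar{G}_n \mid \tilde F)] < \infty
\end{align*}
by Lemma \ref{lemma:Step1}. For the second bound I would argue similarly: $\bar{\empd}_n = (1/n)\sum_{j=0}^{n-1}\delta_{\bar{X}_{n,j}}$, and conditioning on the controlled history, $\bar{X}_{n,j}$ is drawn from $G_{n,j}(\cdot \mid \bar{\empd}_{n,j})$, so $\Eb[\bar{\empd}_n(wf)] = \Eb[\bar{G}_n(wf)]$ by the tower property (the expected empirical average of $wf$ equals the expected average of the conditional means, which is exactly $\Eb[\bar{G}_n(wf)]$). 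Hence the same finite bound applies.

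To conclude that $\bar{\empd} \in \Gamma$ w.p.\ 1, recall from Lemma \ref{lemma:Step4} that $\bar{\empd}_n \tauconv \bar{\empd}$ w.p.\ 1 along the subsequence under consideration. Although $wf$ is not bounded, the map $\nu \mapsto \nu(wf) = \int wf\,d\nu$ is a nondecreasing limit of $\tau$-continuous maps $\nu \mapsto \nu(wf \wedge K)$ as $K \to \infty$, hence lower semicontinuous in the $\tau$-topology; so $\bar{\empd}(wf) \leq \liminf_n \bar{\empd}_n(wf)$ pointwise. Taking expectations and applying Fatou's lemma together with the uniform bound just established,
\begin{align*}
	\Eb[\bar{\empd}(wf)] \leq \Eb\bigl[\liminf_n \bar{\empd}_n(wf)\bigr] \leq \liminf_n \Eb[\bar{\empd}_n(wf)] \leq \sup_n \Eb[\bar{\empd}_n(wf)] < \infty,
\end{align*}
so $\bar{\empd}(wf) < \infty$ a.s., i.e.\ $\bar{\empd} \in \Gamma$ w.p.\ 1. (Since $\bar{\empd}$ is already known to be a probability measure from Lemma \ref{lemma:Step3}, this is exactly the defining condition of $\Gamma$.)

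I expect the main obstacle to be the second uniform bound, $\sup_n \Eb[\bar{\empd}_n(wf)] < \infty$: one must be careful that $\bar{\empd}_n$ is a \emph{controlled} empirical measure, so the $\bar{X}_{n,j}$ are neither i.i.d.\ nor drawn from $\tilde F$, and the identity $\Eb[\bar{\empd}_n(wf)] = \Eb[\bar{G}_n(wf)]$ must be justified through the conditional structure of the control problem rather than by a direct change of measure. The rest — the inequality \eqref{eq:ineq} manipulation and the Fatou/lower-semicontinuity step — is routine once the finite-moment hypothesis on $wf$ under $\tilde F$ and the entropy bound of Lemma \ref{lemma:Step1} are in hand.
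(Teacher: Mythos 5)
Your proposal is correct and follows essentially the same route as the paper: the inequality \eqref{eq:ineq} with $a=wf$ and $b=d\bar{G}_n/d\tilde F$ combined with Lemma \ref{lemma:Step1} for the first bound, the tower-property identity $\Eb[\bar{\empd}_n(wf)]=\Eb[\bar{G}_n(wf)]$ for the second, and truncation at level $K$ plus Fatou for membership in $\Gamma$. The only cosmetic difference is that you pass to the limit along $\bar{\empd}_n \tauconv \bar{\empd}$ via lower semicontinuity of $\nu\mapsto\nu(wf)$, whereas the paper uses the convergence $\bar{G}_n \tauconv \bar{\empd}$; both are supplied by Lemma \ref{lemma:Step4} and the arguments are interchangeable.
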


\begin{proof} Lemma \ref{lemma:Step1} shows that $\sup _n \Eb [\re (\bar{G} _n \mid \tilde{F})] < \infty$. Hence, each $\bar{G} _n$ has almost surely a well-defined Radon-Nikodym derivative $w_n$ with respect to $\tilde{F}$ and by definition
$$ \re (\bar{G} _n \mid \tilde{F}) = \int  w_n \log (w_n) d \tilde{F}. $$
Since $w$, $f$ and $w_n$ are all non-negative functions, the inequality (\ref{eq:ineq}) with $\sigma = 1$ gives
\begin{align*}
\bar{G} _n (wf) &= \int  wf d \bar{G} _n = \int  w f w_n d \tilde{F} \\
& \leq \int  e^{ w f}d\tilde{F} +  \int (w_n \log w_n - w_n -1 )d \tilde{F} \\
&= \int  e^{w f}d \tilde{F} + \int  \log w_n d \bar{G} _n \\
&= \int  e^{ w f}d \tilde{F} + \re(\bar{G} _n \mid \tilde{F}).
\end{align*}
Thus,
$$ \Eb [\bar{G} _n (wf)] \leq  \Eb[\re(\bar{G} _n \mid \tilde{F})] + \int e^{ w f}d \tilde{F},$$
and
$$ \sup _n \Eb [\bar{G} _n (wf)] \leq  \sup _n \Eb[\re(\bar{G} _n \mid \tilde{F})] + \int  e^{ w f}d \tilde{F}.$$
By the assumption and Lemma \ref{lemma:Step1} it holds that
$$ \sup _n \Eb [\bar{G} _n (wf)] < \infty. $$
Lemma \ref{lemma:Step4} proves that $\bar{G} _n \tauconv \bar{\empd}$ with probability 1. For $m \in \N$, $wf \wedge m$ is a bounded, measurable function and the $\tau$-convergence implies that
\begin{align*}
	\infty &> \limsup _n \Eb [\bar{G} _n (wf)] \geq \limsup _n \Eb [\bar{G} _n (wf \wedge m)] = \Eb [\bar{\empd} (wf \wedge m)].
\end{align*}
Taking $\limsup$ as $m \uparrow \infty$ together with a repeated use of Fatou's lemma shows that $\Eb [\bar{\empd} (wf)] < \infty$. Since $\empd (wf)$ is a non-negative random variable it follows that $\bar{\empd} \in \Gamma$ a.s.

The boundedness of $\sup _n \Eb [\bar{\empd} _n (wf)]$ is proved by repeatedly conditioning on the controlled process. 
\begin{align*}
	\Eb[\bar{\empd} _n (wf)] &= \Eb [\frac{1}{n}\sum _{j=0} ^{n-1} (wf)(\bar{X}_{n,j})] \\
	&= \Eb \bigg[ \Eb [\frac{1}{n} (wf)(\bar{X}_{n,n-1}) + \frac{1}{n}\sum _{j=0} ^{n-2} (wf)(\bar{X}_{n,j}) \mid \bar{\empd} _{n,n-1}] \bigg] \\
	&= \Eb \left [\frac{1}{n}\int (wf)(x) dG_{n,n-1}(x \mid \bar{\empd} _{n,n-1}) + \bar{\empd} _{n,n-1} (wf) \right] \\
	&= \Eb \bigg[ \frac{1}{n}G_{n,n-1}(wf \mid \bar{\empd} _{n,n-1})\\
	& \quad \qquad  + \Eb [\frac{1}{n} (wf)(\bar{X}(wf)_{n,n-2}) + \frac{1}{n}\sum _{j=0} ^{n-3} (wf)(\bar{X}_{n,j})\mid \bar{\empd} _{n,n-2}] \bigg] \\
	&= \Eb \biggl[ \frac{1}{n}G_{n,n-1}(wf \mid \bar{\empd} _{n,n-1}) + \frac{1}{n}G_{n,n-2}(wf \mid \bar{\empd} _{n,n-2}) + \bar{\empd} _{n,n-2}(wf)\biggr].
\end{align*}
Proceeding like this one obtains
\begin{align*}
	\Eb [\bar{\empd} _n (wf)] &= \Eb \biggl[\frac{1}{n}G_{n,0}(wf \mid \bar{\empd} _{n,0}) + ... +\frac{1}{n}G_{n,n-1}(wf \mid \bar{\empd} _{n,n-1})\biggr] \\
	& = \Eb \biggl[\frac{1}{n} \sum _{j=0} ^{n-1} \int w(x)f(x)dG _{n,j}(x \mid \bar{\empd} _{n,j})\biggr] \\
	&= \Eb [\bar{G} _n (wf)].
\end{align*}
This completes the proof.
\end{proof}

The final step for proving Proposition \ref{prop:Upper} is to show that the almost sure convergence in the $\tau$-topology on $\M _1$ implies almost sure convergence in the $\tau$-topology on $\M$ for the corresponding mapped measures $\Psi (\bar{G} _n ; \cdot)$ and $\Psi (\bar{\empd} _n ; \cdot)$.
\begin{lemma}
\label{lemma:Mapping}
	Along the subsequence for which the convergence in the $\tau$-topology holds, the convergences
	$$ \Psi (\bar{G} _n;\cdot) \tauconv \Psi (\bar{\empd};\cdot), \textrm{ and} \ \ \Psi (\bar{\empd} _n;\cdot) \tauconv \Psi (\bar{\empd};\cdot),$$
in $\M$ hold w.p. 1.
\end{lemma}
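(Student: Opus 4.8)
The plan is to show that the $\tau$-convergence $\bar{G}_n \tauconv \bar{\empd}$ (respectively $\bar{\empd}_n \tauconv \bar{\empd}$) is preserved under multiplication by the unbounded weight $wf$, i.e.\ that $\bar{G}_n(wfg) \to \bar{\empd}(wfg)$ for every bounded measurable $g$. Fix such a $g$ with $\|g\|_\infty \leq 1$ (the general case follows by scaling). Since $wf g$ is not bounded, I would introduce the truncation $\phi_m = wf \wedge m$ and split
\begin{align*}
	|\bar{G}_n(wfg) - \bar{\empd}(wfg)|
	&\leq |\bar{G}_n((wf - \phi_m)g)| + |\bar{G}_n(\phi_m g) - \bar{\empd}(\phi_m g)| \\
	&\quad + |\bar{\empd}((\phi_m - wf)g)|.
\end{align*}
The middle term tends to $0$ as $n \to \infty$ for each fixed $m$, since $\phi_m g$ is bounded and measurable and $\bar{G}_n \tauconv \bar{\empd}$. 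The last term tends to $0$ as $m \to \infty$ by monotone (or dominated) convergence, using that $\bar{\empd}(wf) < \infty$ w.p.\ 1 by Lemma \ref{lemma:Step5}. So the crux is the first term: I must show $\sup_n \bar{G}_n(wf\,I\{wf > m\}) \to 0$ as $m \to \infty$, i.e.\ a uniform-integrability estimate for the family $\{wf\}$ under the random measures $\bar{G}_n$.

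For this uniform integrability I would exploit the entropy bound of Lemma \ref{lemma:Step1}, $\sup_n \Eb[\re(\bar{G}_n \mid \tilde{F})] =: R < \infty$, together with assumption (ii), $\int e^{\alpha wf}\, d\tilde{F} < \infty$ for all $\alpha > 0$. Writing $w_n = d\bar{G}_n/d\tilde{F}$, I apply inequality \eqref{eq:ineq} with $a = wf\,I\{wf > m\}$, $b = w_n$, and a parameter $\sigma \geq 1$ to be chosen:
\begin{align*}
	\bar{G}_n(wf\,I\{wf > m\})
	&= \int wf\,I\{wf > m\}\, w_n\, d\tilde{F} \\
	&\leq \int e^{\sigma wf\, I\{wf>m\}}\, d\tilde{F} + \frac{1}{\sigma}\Bigl(\re(\bar{G}_n \mid \tilde{F}) + 1\Bigr).
\end{align*}
Taking expectations, $\Eb[\bar{G}_n(wf\,I\{wf>m\})] \leq \int_{\{wf > m\}} e^{\sigma wf}\, d\tilde{F} + \tilde{F}(\{wf \leq m\}) + \sigma^{-1}(R + 1)$. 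Given $\eta > 0$, first choose $\sigma$ large so that $\sigma^{-1}(R+1) < \eta/3$; then, since $\int e^{\sigma wf}\,d\tilde{F} < \infty$ by (ii), choose $m$ large enough that $\int_{\{wf > m\}} e^{\sigma wf}\, d\tilde{F} < \eta/3$ and that the bounded remainder is controlled. This gives $\sup_n \Eb[\bar{G}_n(wf\,I\{wf>m\})] \to 0$, and a routine argument (e.g.\ passing to a further subsequence, or a direct pointwise truncation mirroring the one in Lemma \ref{lemma:Step5}) upgrades this to the almost-sure statement needed. The expected main obstacle is precisely this passage from the $L^1(\bar{\Prob})$-uniform-integrability estimate to an honest pathwise bound; I anticipate handling it exactly as in the proof of Lemma \ref{lemma:Step5}, namely by using the $\tau$-convergence on the truncated functionals $wf \wedge m$ and then letting $m \to \infty$ with repeated applications of Fatou's lemma.

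For the second convergence, $\Psi(\bar{\empd}_n;\cdot) \tauconv \Psi(\bar{\empd};\cdot)$, the argument is identical once one observes that $\bar{\empd}_n(wfg) = \bar{\empd}_n(wf \cdot g)$ with $\bar{\empd}_n = n^{-1}\sum_{j=0}^{n-1}\delta_{\bar{X}_{n,j}}$, and that the relevant uniform-integrability bound transfers from $\bar{G}_n$ to $\bar{\empd}_n$: indeed, $\Eb[\bar{\empd}_n(wf\,I\{wf>m\})] = \Eb[\bar{G}_n(wf\,I\{wf>m\})]$ by the same repeated conditioning on the controlled process that was used in Lemma \ref{lemma:Step5} to show $\Eb[\bar{\empd}_n(wf)] = \Eb[\bar{G}_n(wf)]$. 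Hence the same truncation split applies verbatim, with $\bar{G}_n$ replaced by $\bar{\empd}_n$, completing the proof.
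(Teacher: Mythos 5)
Your truncation skeleton for the first convergence is the same as the paper's, but there are two genuine gaps. First, the uniform-integrability step: you establish $\sup_n \Eb[\bar{G}_n(wf\,I\{wf>m\})] \to 0$ as $m\to\infty$, but the interchange of limits requires the \emph{pathwise} statement $\lim_m\sup_n \bar{G}_n(wf\,I\{wf>m\}) = 0$ w.p.\ 1, and the former does not imply the latter (take $Z_{n,m}=X_n I\{n\ge m\}$ with $X_n$ independent Bernoulli$(1/n)$: then $\sup_n\Eb[Z_{n,m}]=1/m\to0$, yet $\limsup_n Z_{n,m}=1$ a.s.\ for every $m$). Neither ``passing to a further subsequence'' nor the Fatou argument of Lemma \ref{lemma:Step5} (which only controls the limit $\bar{\empd}$, not the prelimit family uniformly in $n$) fills this. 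The paper's fix is that the Skorohod construction inside the proof of Lemma \ref{lemma:Step4} delivers an \emph{almost-sure} bound $\sup_n\re(\bar{G}_n\mid\tilde F)\le K<\infty$; plugging this into \eqref{eq:ineq} pathwise gives a deterministic, $n$-uniform bound on $\bar{G}_n(wf-wf\wedge m)$ that vanishes as $m\to\infty$ and then $\sigma\to\infty$, which is what is actually needed.

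Second, the claim that the convergence for $\bar{\empd}_n$ ``applies verbatim'' is wrong. The identity $\Eb[\bar{\empd}_n(\cdot)]=\Eb[\bar{G}_n(\cdot)]$ transfers nothing pathwise: $\bar{\empd}_n$ is an empirical measure of the sampled points and carries no entropy bound of its own, so there is no analogue of the uniform tail estimate for $\bar{\empd}_n(wf\,I\{wf>m\})$. The paper instead compares $\int wfg\,d\bar{\empd}_n$ with $\int wfg\,d\bar{G}_n$ \emph{in probability}: the truncated discrepancy is a martingale-difference average controlled by Chebyshev (yielding a bound of order $m^2/(\epsilon^2n)$), the two tail terms are reduced to $\Eb[\int(wf-wf\wedge m)\,d\bar{G}_n]$ via conditioning and then handled with \eqref{eq:ineq}, and the resulting convergence in probability is upgraded to almost-sure convergence only along a further subsequence $n_k$ chosen so that the error probabilities are summable (Borel--Cantelli), run over a countable generating class of Borel sets to obtain convergence of the measures in the $\tau$-topology. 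This entire second half of the argument is missing from your proposal.
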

\begin{proof} Define for $G \in \M _1$ and $m \in \N $ a truncated version $\Psi _m (G;\cdot)$ of the mapping $\Psi$ as the finite measure given by
$$ \Psi _m (G;g) = \int (wf \wedge m) g dG,$$
for each bounded, measurable function $g$. The function $wf \wedge m$ is bounded and measurable and thus $\Psi _m$ is continuous with respect to the $\tau$-topology on $\M _1$. Therefore,
$$ \Psi _m (\bar{G} _n; g) = \int (wf \wedge m) g d\bar{G} _n \to \int (wf \wedge m)g d\bar{\empd} = \Psi _m (\bar{\empd};g),$$
as $n \to \infty$ along the particular subsequence. That is, for any bounded, measurable function $g$ it holds w.p.\ 1 that
\begin{align*} 
\lim _{n \to \infty} \Psi _m (\bar{G} _n ;g) = \Psi _m (\bar{\empd} ;g).
\end{align*}
Moreover, by Lemma \ref{lemma:Step5} the function $wf$ is integrable with respect to $\bar{\empd}$ w.p.1. Since $(wf \wedge m)g \to wfg$ as $m \to \infty$, the dominated convergence theorem implies that w.p.\ 1,
$$ \lim _{m \to \infty} \lim _{n \to \infty} \Psi _m (\bar{G} _n ; g) = \Psi (\bar{\empd};g),$$
for every bounded, measurable function $g$. Therefore, the desired convergence in $\M$ with the $\tau$-topology will follow if the order of the limit operators can be interchanged, which holds if 
\begin{equation}
\label{eq:op_change}
\lim _{m \to \infty} \sup _n \left | \int g wf d\bar{G}_n - \int g (wf \wedge m) d\bar{G}_n \right | = 0.
\end{equation}
The proof of Lemma \ref{lemma:Step4} involves an application of the Skorohod representation theorem, which results in the sequence $\{ \re (\bar{G} _n \mid \tilde{F} )\}$ being bounded a.s.\ and \eqref{eq:op_change} then follows by an application of the inequality \eqref{eq:ineq}; the argument is similar to Lemma 1.4.3(d) in \cite{Dupuis97}. 

The aim is now to show that for each bounded measurable function $g$, $\Psi(\bar{\empd}  _n ; g) \to \Psi(\bar{\empd} ; g)$ in probability in a way such that for indicator functions $g=I_A$ we can appeal to the first Borel-Cantelli lemma to get almost sure convergence of the entire measure along some subsequence. For this, take any $\epsilon >0$ and consider the upper bound
\begin{align}
	&\bar{\Prob} \Big ( \,\Big| \int wfg d\bar{\empd} _n - \int wfg d\bar{G} _n \Big| \geq 3\epsilon \Big ) \notag \\
	&\leq \bar{\Prob} \Big ( \,\Big| \int (wf \wedge m)g d\bar{\empd} _n - \int wfg d\bar{\empd} _n \Big| \geq \epsilon \Big ) \label{eq:ineq1} \\
	& \quad + \bar{\Prob} \Big ( \,\Big| \int (wf\wedge m)g d\bar{\empd} _n - \int (wf \wedge m)g d\bar{G} _n \Big| \geq \epsilon \Big ) \label{eq:ineq2} \\ 
	& \quad + \bar{\Prob} \Big ( \,\Big| \int wfg d\bar{G} _n - \int (wf \wedge m)g d\bar{G} _n \Big| \geq \epsilon \Big ) , \label{eq:ineq3}
\end{align}
which holds for any $m \geq 0$. For $n \in \N$ and $j \in \{ 0,1,...,n-1\}$, define $\mathcal{\bar F}_{n,j}$ as the $\sigma$-algebra generated by the controlled process up to time $j$,
$$ \bar{\calF} _{n,j} = \sigma (\bar{\empd} _{n,0}, \bar{\empd} _{n,1},..., \bar{\empd} _{n,j}).$$
Recall that
$$ \bar{\Prob} (\bar{X} _{n,j} \in dy \mid \bar{\empd} _{n,0}, \bar{\empd} _{n,1},..., \bar{\empd} _{n,j}) = G_{n,j}(dy \mid \bar{\empd} _{n,j}),$$
that is $G_{n,j}(\cdot \mid \bar{\empd} _{n,j})$ is a regular conditional distribution for $\bar{X} _{n,j}$ given $\bar{\calF} _{n,j}$.
To take care of \eqref{eq:ineq1}, condition on the $\bar{\calF}_{n,j}$'s  to relate expectation of integrals with respect to $\bar{\empd} _{n,j}$ to integrals with respect to the control measures $G_{n,j}$,
\begin{align*}
	&\bar{\Prob} \Big (\, \Big| \int (wf \wedge m)g d\bar{\empd} _n - \int wfg d\bar{\empd} _n \Big| \geq \epsilon \Big ) \\
	& \quad \leq \frac{1}{\epsilon} \Eb \Big  [\, \Big| \int (wf \wedge m)g d\bar{\empd} _n - \int wfg d\bar{\empd} _n \Big| \Big  ] \\
	& \quad \leq \frac{||g||_{\infty}}{\epsilon} \Eb \Big  [\int (wf - wf \wedge m) d\bar{\empd} _n \Big  ] \\
	& \quad = \frac{||g||_{\infty}}{\epsilon} \Eb \Big  [\frac{1}{n} \sum _{j=0} ^{n-1} (wf - wf \wedge m) (\bar{X}_{n,j}) \Big ] \\
	& \quad = \frac{||g||_{\infty}}{\epsilon} \Eb\Big   [\frac{1}{n} \sum _{j=0} ^{n-1} \Eb \big [(wf - wf \wedge m) (\bar{X}_{n,j}) \mid \bar{\calF}_{n,j} \big ]\Big ] \\
	& \quad = \frac{||g||_{\infty}}{\epsilon} \Eb \Big [\frac{1}{n} \sum _{j=0} ^{n-1} \int (wf - wf \wedge m) dG_{n,j} \Big  ] \\
	& \quad = \frac{||g||_{\infty}}{\epsilon}\Eb \Big  [\int (wf - wf \wedge m) d\bar{G}_{n}\Big  ].
\end{align*}
Here, $||g||_{\infty}$ denotes the sup-norm over $\calX$. 

Next, consider the term in \eqref{eq:ineq3}. Analogous to the above,
\begin{align*}
	&\bar{\Prob} \Big (\ \Big| \int (wf \wedge m)g d\bar{G} _n - \int wfg d\bar{G} _n \ \Big| \geq \epsilon \Big )  \leq \frac{||g||_{\infty}}{\epsilon}\Eb \Big  [\int (wf - wf \wedge m) d\bar{G}_{n} \Big ].
\end{align*}
For any $\eta > 0$, define the set
\begin{align*}
	A(m,\eta) = \{ x\in \calX : (wf)(x) - (wf \wedge m)(x) > \eta \},
\end{align*}
and note that
\begin{align*}
	\int (wf - wf \wedge m ) d\bar{G} _n &= \int _{A(m,\eta)} (wf - wf \wedge m ) d\bar{G} _n \\
	& \quad + \int _{\calX \setminus A(m,\eta)} (wf - wf \wedge m ) d\bar{G} _n \\
	& \leq \int _{A(m,\eta)} (wf - wf \wedge m ) d\bar{G} _n + \eta.
\end{align*}
Together with the inequality \eqref{eq:ineq} this yields
\begin{align*}
\Eb \Big  [\int (wf - wf \wedge m) d\bar{G}_{n} \Big ] &\leq \int _{A(m,\eta)} e^{\sigma wf}d\tilde{F} + \sup _n \frac{1}{\sigma}\Eb[\re (\bar{G}_n \mid \tilde{F})] + \eta.
\end{align*}

Finally, consider the term in \eqref{eq:ineq2}. To show that it converges to zero a martingale argument, similar to the proof of \cite[Lemma 8.2.7]{Dupuis97}, can be used. Since $G_{n,j} (\cdot \mid \bar{\empd} _{n,j})$ is a regular conditional distribution of $\bar{X} _{n,j}$, for any bounded measurable function $g: \calX \to \R$, w.p.1,
\begin{align*}
	&\Eb \biggl[(wf \wedge m)g (\bar{X} _{n,j}) - \int  (wf \wedge m) g d G _{n,j} (x\mid \bar{\empd} _{n,j}) \mid \bar{\calF} _{n,j}\biggr] \\
	& \quad = \Eb [(wf \wedge m)g(\bar{X} _{n,j}) \mid \bar{\calF} _{n,j}] - G_{n,j} ((wf \wedge m)g \mid \bar{\empd} _{n,j}) \\
	& \quad = G_{n,j} ((wf \wedge m)g \mid \bar {\empd} _{n,j}) - G_{n,j} ((wf \wedge m)g \mid \bar {\empd} _{n,j}) =0
\end{align*}
Hence, 
$$ \big \{ (wf \wedge m)g(\bar{X} _{n,j}) - G_{n,j} ((wf \wedge m)g \mid \bar {\empd} _{n,j}) \big \} _{j=0,1,...,n-1},$$
is a martingale difference sequence with respect to $\{ \bar{\calF} _{n,j} \}$. Moreover, for any $\epsilon > 0$,
\begin{align*}
& \bar{\Prob} \Big (\ \Big| \int (wf \wedge m)g d\bar{\empd} _n - \int (wf \wedge m)g d\bar{G} _n \ \Big| \geq \epsilon \Big ) \\
&\quad \leq \frac{1}{\epsilon ^2} \Eb \Big [ \ \Big| \int (wf \wedge m)g d\bar{\empd} _n - \int (wf \wedge m)g d\bar{G} _n \, \Big| ^2  \Big ] \\
	&\quad = \frac{1}{\epsilon ^2} \Eb \Big [ \frac{1}{n^2} \big ( \sum _{j=0} ^{n-1} ( (wf \wedge m)g (\bar{X} _{n,j}) - G _{n,j}((wf \wedge m)g \mid \bar{\empd} _{n,j}) ) \big ) ^2  \Big ] \\
	&\quad = \frac{1}{\epsilon ^2} \Eb \Big [ \frac{1}{n^2} \big ( \sum _{j=0} ^{n-1} ( (wf \wedge m)g (\bar{X} _{n,j}) - G _{n,j}((wf \wedge m)g \mid \bar{\empd} _{n,j}) ) ^2 \\ 
	& \quad \qquad \qquad + \sum _{i=1, j \neq i} ^{n-1} ( (wf \wedge m)g (\bar{X} _{n,i}) - G _{n,i}((wf \wedge m)g \mid \bar{\empd} _{n,i}) \\
	& \qquad \qquad \qquad \times ( (wf \wedge m)g (\bar{X} _{n,j}) - G _{n,j}((wf \wedge m)g \mid \bar{\empd} _{n,j}) \big ) \Big ].
\end{align*}
The second term vanishes when conditioning on the $\sigma$-algebras $\{ \bar{\calF} _{n,j} \}$. For the first term inside the expectation an upper bound on each of the summands is
\begin{align*}
	\Big ( (wf \wedge m)g (\bar{X} _{n,j}) - G _{n,j} ((wf \wedge m)g | \bar{\empd} _{n,j})\Big )^2 \leq 4 ||g|| ^2 _{\infty} m ^2.
\end{align*}
It follows that
\begin{align*}
	&\frac{1}{\epsilon ^2} \Eb \biggl [ \frac{1}{n^2} \big ( \sum _{j=0} ^{n-1} ( (wf \wedge m)g (\bar{X} _{n,j}) - G _{n,j}((wf \wedge m)g \mid \bar{\empd} _{n,j}) ) \big ) ^2 \biggr ] \\
	&\quad \leq \frac{4||g|| ^2 _{\infty} m^2}{\epsilon ^2 n}.	
\end{align*}

Combining the upper bounds for \eqref{eq:ineq1}-\eqref{eq:ineq3} yields the upper bound
\begin{align*}
 &\bar{\Prob} \Big ( | \Psi (\bar{\empd} _n ; wfg) - \Psi (\bar{G} _n ; wfg) | \geq 3\epsilon \Big ) \\
 & \leq \frac{2 ||g||_{\infty}}{\epsilon} \Big(\int_{A(m,\eta)} e^{\sigma wf }d\tilde{F}+ \frac{1}{\sigma}\sup _n \Eb[\re (\bar{G}_n \mid \tilde{F})] + \eta \Big)+ \frac{4||g||_\infty m ^2}{\epsilon ^2 n},
\end{align*}
which converges to $0$ if we send $n,m$ and $\sigma$ to $ \infty$ in that order. To see this, note that for any $\eta >0$, we can choose $m$ large enough so that $\int _{A(m,\eta)} e^{\sigma w f} d \tilde F$ is arbitrarily small. Indeed, we have the upper bound
\begin{align*}
	\tilde F(A(m,\eta)) \leq \frac{1}{\eta} \int _{A(m,\eta)} (wf - wf\wedge m) d \tilde F \leq \frac{1}{\eta} \int (wf - wf\wedge m) d \tilde F,
\end{align*}
and by the dominated convergence theorem, since $\tilde F(wf) < \infty$,
\begin{align*}
	\int (wf - wf \wedge m) d\tilde F \to 0, \ m \to \infty.
\end{align*}
Hence, for any choice of $\eta$, $\tilde F (A(m,\eta)) \to 0$ as $m \to \infty$. The convergence to $0$ for the upper bound now follows from the assumption $\int  e^{\sigma w f} d\tilde{F} < \infty$, and Lemma \ref{lemma:Step1}. 

To complete the proof we want to use the same argument as in Lemma 9.3.3 in \cite{Dupuis97} together with the convergence for $\Psi (\bar{G} _n; \cdot)$. For this it is not enough to have the above convergence in probability but one needs to be able to find a subsequence for which the convergence is fast enough for an application of the first Borel-Cantelli lemma. To illustrate, say that we want a subsequence $n_k$ such that for $n \geq n_k$, 
$$ \bar{\Prob} \Big ( | \Psi (\bar{\empd} _n ; wfg) - \Psi (\bar{G} _n ; wfg) | \geq 3\epsilon \Big ) \leq 2^{-k}.$$
Start by choosing $\eta \leq 2^{-k} \epsilon / 8$. Next, pick a $\sigma _k$ such that 
$$ \frac{1}{\sigma _k} \sup _n \Eb[\re (\bar{G}_n \mid \tilde{F})] \leq \frac{2^{-k} \epsilon}{8},$$
which is again possible due to Lemma \ref{lemma:Step1}. Having chosen this $\sigma_k$, pick $m_k$ sufficiently large so that
$$ \int _{A(m_k ,\eta)} e^{\sigma _kwf} d\tilde{F} \leq \frac{2^{-k} \epsilon}{8}.$$
Finally, pick $n_k$ such that
$$ \frac{m_k ^2}{n_{k}} \leq \frac{2^{-k} \epsilon ^2}{16}.$$
If $g$ is an indicator function, the above will yield a sequence $\{ n_k \}$ such that the probability of interest is smaller than $2^{-k}$. 
The key to the argument used in Lemma 9.3.3 in \cite{Dupuis97} is that, since the underlying space $\calX$ is assumed complete and separable, the Borel $\sigma$-algebra on $\calX$ is generated by a countable collection of Borel sets. Together with the above this yields the desired result, namely that
$$ \Psi (\bar{\empd} _n ; \cdot ) \tauconv \Psi (\empd ; \cdot),$$
in $\M$ w.p.\ 1. The reader is referred to \cite{Dupuis97} for the details.
\end{proof}
Lemmas \ref{lemma:Step1}-\ref{lemma:Mapping} complete the proof of Proposition \ref{prop:Upper}. 
\\

We end this section by proving that the function $I$ in \eqref{eq:Rate} has sequentially compact level sets in the $\tau$-topology.
\begin{proposition}
\label{prop:Rate}
	The function $I: \M \to [0,\infty]$ in \eqref{eq:Rate} has sequentially compact level sets on $\M$ equipped with the $\tau$-topology.
\end{proposition}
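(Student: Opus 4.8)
The plan is to fix a level $M<\infty$ and show that the set $L_M=\{\nu\in\M:I(\nu)\le M\}$ is sequentially compact in the $\tau$-topology. Let $\{\nu_n\}$ be a sequence in $L_M$. Since $I(\nu_n)\le M<\infty$, the infimum in \eqref{eq:Rate} is over a nonempty set, so for each $n$ we may choose a probability measure $G_n\in\Gamma\cap\Delta$ with $\Psi(G_n)=\nu_n$ and $\re(G_n\mid\tilde{F})\le M+1$. The whole argument then reduces to running the reasoning behind Lemmas \ref{lemma:Step1}--\ref{lemma:Mapping} on the deterministic sequence $\{G_n\}$, whose only usable property is the uniform entropy bound $\sup_n\re(G_n\mid\tilde{F})\le M+1$; the controlled-process structure of Section \ref{sec:proofs} plays no role here, so the situation is in fact simpler.

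First I would obtain tightness of $\{G_n\}$ in the weak topology on $\M_1$: the uniform bound on $\re(\cdot\mid\tilde{F})$ together with condition (i) of Theorem \ref{thm:Laplace} (a function $U$ with $\int e^{U}d\tilde{F}<\infty$ and relatively compact level sets) forces $\{G_n\}$ to be tight, exactly as in Lemma \ref{lemma:Step2} (a special case of Proposition 8.2.5 in \cite{Dupuis97}). By Prohorov's theorem some subsequence, still denoted $\{G_n\}$, converges weakly to a probability measure $G$, and lower semicontinuity of relative entropy gives $\re(G\mid\tilde{F})\le\liminf_n\re(G_n\mid\tilde{F})\le M$, so $G\in\Delta$. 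Next, the approximation argument of Lemma \ref{lemma:Step4} (the counterpart of Lemma 9.3.3 in \cite{Dupuis97}, here for independent and identically distributed structure and deterministic measures) upgrades this to $G_n\tauconv G$, using the entropy bound and the inequality \eqref{eq:ineq}.

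It remains to push the convergence through the (possibly unbounded) map $\Psi$. Exactly as in Lemma \ref{lemma:Step5}, inequality \eqref{eq:ineq} with $\sigma=1$ gives $G_n(wf)\le\int e^{wf}d\tilde{F}+\re(G_n\mid\tilde{F})$, which is uniformly bounded by condition (ii) of Theorem \ref{thm:Laplace} with $\alpha=1$; passing to the limit along the $\tau$-convergent subsequence (truncating $wf$ at level $m$, then letting $m\to\infty$ with Fatou's lemma) shows $G(wf)<\infty$, i.e.\ $G\in\Gamma$, and in particular $\Psi(G)$ is a finite measure. Finally, the truncation argument of Lemma \ref{lemma:Mapping} shows $\Psi(G_n)\tauconv\Psi(G)$ in $\M$: for each bounded measurable $g$ and each $m$, $\int(wf\wedge m)g\,dG_n\to\int(wf\wedge m)g\,dG$ by $\tau$-convergence, while $\lim_{m\to\infty}\sup_n\bigl|\int g\,wf\,dG_n-\int g(wf\wedge m)\,dG_n\bigr|=0$ follows from \eqref{eq:ineq} together with the entropy bound and condition (ii), so the two limits may be interchanged. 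Hence $\nu_n=\Psi(G_n)\tauconv\Psi(G)=:\nu\in\M$, and $I(\nu)\le\re(G\mid\tilde{F})\le M$, so $\nu\in L_M$; this proves sequential compactness.

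I expect the main obstacle to be the last step together with the upgrade from weak to $\tau$-convergence: controlling the unbounded map $\Psi$ so that $\tau$-convergence of $G_n$ transfers to $\tau$-convergence of $\Psi(G_n)$ is precisely the nonstandard part of the proof of Theorem \ref{thm:Laplace}. The point is that these estimates can be imported almost verbatim from Lemmas \ref{lemma:Step4} and \ref{lemma:Mapping}, the only simplification being that here the measures $G_n$ are deterministic rather than random, so no Skorohod representation or martingale-difference bookkeeping is needed.
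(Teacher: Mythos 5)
Your proposal is correct and follows essentially the same route as the paper: extract preimages $G_n\in\Gamma\cap\Delta$ with a uniform entropy bound, pass to a $\tau$-convergent subsequence, and then transfer the convergence through $\Psi$ exactly as in Lemmas \ref{lemma:Step5} and \ref{lemma:Mapping}, noting correctly that the deterministic setting removes the need for Skorohod representation or martingale estimates. The only (cosmetic) difference is the middle step: the paper simply invokes the $\tau$-compactness of the level sets of $\re(\cdot\mid\tilde F)$ \cite[Proposition 9.3.6]{Dupuis97}, whereas you rebuild that fact from weak tightness (via condition (i) and Lemma \ref{lemma:Step2}) plus the Lemma \ref{lemma:Step4} approximation argument — both are valid, though the cited result does not actually require condition (i).
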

\begin{proof} Let
$$ C(K) = \{ \nu \in \M: I(\nu) \leq K \},$$
for $K < \infty$. Take any sequence $\{ \nu _n \} \subset C(K)$. Since $I(\nu _j) \leq K$ for each $j$, there exists a sequence $\{ G_n \} \subset \Delta \cap \Gamma$ such that $\Psi (G_j) = \nu _j$. Moreover, it must hold that $\re (G_j \mid \tilde{F}) \leq K + \epsilon$ for every $\epsilon > 0$ and each $j$. Hence,
$$ \sup _j \re (G_j \mid \tilde{F}) < \infty. $$
The relative entropy has compact level sets in the $\tau$-topology  \cite[Proposition 9.3.6]{Dupuis97}. Therefore, there exists some subsequence, also indexed by $n$,  
and some $G_*$ such that 
$$ G_{n} \tauconv G_* ,$$
and the corresponding (finite) measures $\nu _{n} =\Psi (G_{n})$, $\nu _* = \Psi (G _*)$ are in the set $C(K)$. It remains to prove that $\nu _{n} \tauconv \nu _*$. To this end, we note that by the same arguments as in Lemma \ref{lemma:Step5} it holds that 
$$ \sup _{n} G_{n} (wf) < \infty, $$
and
$$ G _* (wf) < \infty. $$
The conditions used to prove Lemma \ref{lemma:Mapping} are therefore satisfied and it follows in the same way that for every bounded measurable $g$,
$$ \nu _{n}(g) =\Psi (G_{n}; g) \to \Psi (G _* ; g) = \nu _* (g)$$
as $n \to \infty$. Hence, $\nu _{n} \tauconv \nu _*$ and the level set $C(K)$ is indeed sequentially compact in the $\tau$-topology. 
\end{proof}

\bibliographystyle{plain}
\bibliography{references}

\end{document}